\numberwithin{equation}{section}
\theoremstyle{plain}
\newtheorem{Lemma}{Lemma}[section]
\newtheorem{Proposition}[Lemma]{Proposition}
\newtheorem{Theorem}[Lemma]{Theorem}
\newtheorem{Corollary}[Lemma]{Corollary}
\theoremstyle{definition}
\newtheorem{Definition}[Lemma]{Definition}
\newtheorem{Example}[Lemma]{Example}
\newtheorem{Remark}[Lemma]{Remark}
\def\R{{\rm R}}
\def\Re{\mathbb{R}}
\def\N{\mathbb{N}}
\DeclareMathOperator*{\esssup}{ess\,sup}
\begin{document}
\title{\textbf{Resolvent and Gronwall inequalities and fixed points of evolution operators}}
\author{Alexander Kalinin\footnote{Department of Mathematics, LMU Munich, Germany. E-mail: {\tt kalinin@math.lmu.de}}}
\date{December 30, 2024}
\maketitle

\begin{abstract}
We introduce kernels and resolvents on preordered sets and derive sharp resolvent inequalities that entail Gronwall inequalities for functions of several variables. In this way, we can prove a fixed point result for operators on topological spaces that extends Banach's fixed point theorem and allows for a wide range of applications.
\end{abstract}

\noindent
{\bf MSC2020 classifications:} 26D10, 26A42, 26A33, 26B99, 47H10.
\\
{\bf Keywords:} kernel, resolvent, integral inequality, Gronwall inequality, fixed point.

\section{Iterated integral inequalities and resulting fixed points}

The integral inequality for functions of one variable in the classical work~\cite{Gro1919} of Gronwall, known nowadays as \emph{Gronwall's inequality}, is a proven method to analyse the behaviour of solutions to ordinary and stochastic differential equations. A variety of existence and uniqueness results and comparison, growth and stability estimates that play a fundamental role in the solution theories for such equations are based on this crucial inequality.

For a few of the many textbooks on ordinary differential equations (ODEs) that stress this fact, see \cite[Chapter~1]{CodLev55}, \cite[Chapter~3]{Har02} and \cite[Chapters~2 and~4]{Ama90}. Relevant references on stochastic differential equations (SDEs) are \cite[Chapter~4]{IkeWat89}, \cite[Section~5.2]{KarShr91}, \cite[Chapter~9]{RevYor99} and \cite[Chapters~2 and~5]{Mao08}, for instance. As such a practicable tool, the inequality of Gronwall keeps appearing in the literature treating ODEs and SDEs. Slightly refined versions can be found in \cite[Section~1.3]{Pac98}, \cite[p.~128]{Mag23}, \cite[p.~543]{RevYor99} and \cite[p.~45]{Mao08}.

Initially, Gronwall's inequality was enhanced in different directions by Bellman~\cite{Bel43}, Jones~\cite{Jon64}, Willett~\cite{Wil65}, Schmaedeke and Sell~\cite{SchSel68}, Herod~\cite{Her69}, Helton~\cite{Hel69}, Wright, Klasi and Kennebeck~\cite{WriKlaKen71}, Pachpatte~\cite{Pac75} and Groh~\cite{Gro80}. For further initial articles, see the references therein. Versions of Gronwall's inequality for \emph{fractional kernels} with possible singularities were inductively derived by Henry~\cite[pp.~188-189]{Hen81}. Later on, such and other types of \emph{singular kernels} were considered in \cite{YeGaoDin07, Lin13, ZhanWei16, Alm17, Web19, VanCap19, LiuXuZhou24}. As we will see in Examples~\ref{ex:fractional resolvent inequality},~\ref{ex:sums of transformed fractional kernels} and~\ref{ex:fractional kernel}, all these integral inequalities involving kernels are covered by the \emph{resolvent inequality} on preordered sets in Corollary~\ref{co:resolvent inequality}.

Moreover, this result extends the resolvent inequality for functions on compact intervals and continuous kernels by Chu and Metcalf~\cite{ChuMet67} and it entails the \emph{Gronwall inequality for functions of several variables} in Corollary~\ref{co:Gronwall inequalities}. The latter includes the inequalities of Headley~\cite[Theorem~2]{Hea74} and Rasmussen~\cite[Theorem~B]{Ras76} for non-negative functions and linear integral operators. In particular, the scalar version of the deduced inequality by Snow~\cite{Sno72} is a special case. Remarkably, all these results can be described to a considerable extent by a \emph{basic induction principle} for an arbitrary non-empty set $I$.

\begin{Lemma}\label{le:induction principle}
Let $\mathcal{D}$ be a non-empty set of $[0,\infty]$-valued functions on $I$, $\Psi:\mathcal{D}\rightarrow \mathcal{D}$ be an operator and $J$ be a non-empty set in $I$ such that
\begin{equation}\label{eq:induction principle 1}
u \leq \tilde{u}\quad\text{on $J$}\quad\Rightarrow\quad \Psi(u) \leq \Psi(\tilde{u})\quad\text{on $J$}
\end{equation}
for any $u,\tilde{u}\in\mathcal{D}$. Further, let $(u_{n})_{n\in\N_{0}}$ be a sequence in $\mathcal{D}$ satisfying
\begin{equation}\label{eq:induction principle 2}
u_{n} \leq \Psi(u_{n-1})\quad\text{on $J$}\quad\text{for all $n\in\N$.}
\end{equation}
Then the $n$-fold composition of $\Psi$ with itself, denoted by $\Psi^{n}$, satisfies
\begin{equation*}
u_{n} \leq \Psi^{n}(u_{0})\quad\text{on $J$}\quad\text{for any $n\in\N$.}
\end{equation*}
Moreover, equality holds if the inequalities in~\eqref{eq:induction principle 1} and~\eqref{eq:induction principle 2} are equations.
\end{Lemma}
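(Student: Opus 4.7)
The plan is to proceed by induction on $n\in\N$, using the monotonicity property~\eqref{eq:induction principle 1} as the driver of the inductive step.

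For the base case $n=1$, the assertion $u_{1}\leq \Psi(u_{0})=\Psi^{1}(u_{0})$ on $J$ is simply the hypothesis~\eqref{eq:induction principle 2} evaluated at $n=1$, so nothing has to be done.

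For the inductive step, assume that $u_{n-1}\leq \Psi^{n-1}(u_{0})$ on $J$ for some $n\geq 2$. Since $\Psi^{n-1}(u_{0})\in\mathcal{D}$ by iterated application of $\Psi$, the monotonicity assumption~\eqref{eq:induction principle 1} applies with the pair $(u_{n-1},\Psi^{n-1}(u_{0}))$ and yields $\Psi(u_{n-1})\leq \Psi(\Psi^{n-1}(u_{0}))=\Psi^{n}(u_{0})$ on $J$. Chaining this with the hypothesis $u_{n}\leq \Psi(u_{n-1})$ on $J$ from~\eqref{eq:induction principle 2} produces $u_{n}\leq \Psi^{n}(u_{0})$ on $J$, closing the induction. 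The equality statement is obtained by running the same argument with $\leq$ replaced by $=$ throughout, which is legitimate because the implication in~\eqref{eq:induction principle 1} becomes an equivalence of equalities when its hypothesis and conclusion are both equations.

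The argument is essentially routine once one unpacks the definitions, so there is no real obstacle; the only point that requires a moment of care is ensuring that $\Psi^{n-1}(u_{0})$ lies in $\mathcal{D}$ so that~\eqref{eq:induction principle 1} is applicable, which is immediate from the assumption that $\Psi$ maps $\mathcal{D}$ into itself.
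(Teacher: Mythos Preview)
Your proof is correct and follows essentially the same approach as the paper's own proof: a straightforward induction on $n$ using~\eqref{eq:induction principle 2} followed by~\eqref{eq:induction principle 1}, with the equality case obtained by repeating the argument with equalities throughout. The only cosmetic difference is that the paper passes from $n$ to $n+1$ while you pass from $n-1$ to $n$, and your phrase ``equivalence of equalities'' is slightly imprecise (it remains an implication, not an equivalence), but the logic is the same and fully sound.
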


\begin{Remark}
Assume that $u:I\rightarrow [0,\infty]$ satisfies $u(t) \leq \limsup_{n\uparrow\infty} u_{n}(t)$ for all $t\in J$. Then the principle entails that
\begin{equation*}
u \leq \limsup_{n\uparrow\infty} \Psi^{n}(u_{0})\quad\text{on $J$,}
\end{equation*}
and equality holds whenever $u = u_{n}$ on $J$ for every $n\in\N$ and the inequality symbols in~\eqref{eq:induction principle 1} and~\eqref{eq:induction principle 2} are replaced by equal signs.
\end{Remark}

In what follows, let $I$ be endowed with a $\sigma$-field $\mathcal{I}$ and a preorder $\leq$, which is a reflexive and transitive relation, such that the \emph{triangular set} of all $(t,s)\in I\times I$ with $s\leq t$ lies in the product $\sigma$-field $\mathcal{I}\otimes\mathcal{I}$. Then the non-empty sets
\begin{equation*}
I(t):=\{s\in I\,|\,s\leq t\}\quad\text{and}\quad [r,t]:=\{s\in I\,|\, r\leq s\leq t\}
\end{equation*}
belong to $\mathcal{I}$ for any $r,t\in I$ with $r\leq t$, and we notice that Lemma~\ref{le:induction principle} applies to any \emph{nonlinear integral operator} with respect to a $\sigma$-finite measure $\mu$ on $\mathcal{I}$.

\begin{Example}\label{ex:induction principle}
Let $\mathcal{D}$ be the convex cone of all $[0,\infty]$-valued measurable functions on $I$ and $\Psi:\mathcal{D}\rightarrow [0,\infty]^{I}$ be the nonlinear operator defined by
\begin{equation*}
\Psi(u)(t) := v(t) + \bigg(\int_{I(t)}\!F(t,s,u)^{p}\,\mu(\mathrm{d}s)\bigg)^{\frac{1}{p}},
\end{equation*}
where $v\in\mathcal{D}$, $p\geq 1$ and $F:I\times I\times\mathcal{D}\rightarrow [0,\infty]$ is a functional such that $F(\cdot,\cdot,u)$ is product measurable for each $u\in\mathcal{D}$. Then $\Psi$ maps $\mathcal{D}$ into itself, by Fubini's theorem, and if $J$ is a non-empty set in $I$ such that
\begin{equation*}
F(t,s,u) \leq F(t,s,\tilde{u})
\end{equation*}
for all $s,t\in J$ and $u,\tilde{u}\in\mathcal{D}$ with $s\leq t$ and $u\leq\tilde{u}$ on $J$, then the required monotonicity condition~\eqref{eq:induction principle 1} in Lemma~\ref{le:induction principle} is satisfied as soon as $J$ has full measure.
\end{Example}

In this work, we will focus on the case that for the functional $F$ in Example~\ref{ex:induction principle} there is a \emph{non-negative kernel} $k$ on $I$, which is an $[0,\infty]$-valued measurable function on the triangular set, such that
\begin{equation*}
F(t,s,u) = k(t,s)u(s)
\end{equation*}
for all $s,t\in I$ with $s\leq t$ and any measurable function $u:I\rightarrow [0,\infty]$. Based on Fubini's theorem, we recursively define a sequence $(\R_{k,\mu,n})_{n\in\N}$ of non-negative kernels on $I$, which represents the iterated kernels of $k$ relative to $\mu$, via
\begin{equation}\label{eq:resolvent sequence}
\R_{k,\mu,1}(t,s) := k(t,s)\quad\text{and}\quad\R_{k,\mu,n+1}(t,s):=\int_{[s,t]}\!k(t,\tilde{s})\R_{k,\mu,n}(\tilde{s},s)\,\mu(\mathrm{d}\tilde{s}).
\end{equation}
Then the non-negative kernel $\R_{k,\mu}:=\sum_{n=1}^{\infty}\R_{k,\mu,n}$ is called the \emph{resolvent} of $k$ with respect to $\mu$. This name is justified by the fact that the resolvent yields for every $s\in I$ a solution to the generalised linear \emph{Volterra integral equation}
\begin{equation}\label{eq:linear Volterra equation}
\R_{k,\mu}(t,s) = k(t,s) + \int_{[s,t]}\!k(t,\tilde{s})\R_{k,\mu}(\tilde{s},s)\,\mu(\mathrm{d}\tilde{s})
\end{equation}
for all $t\in I$ with $s\leq t$. In particular, if $I$ is a non-degenerate interval in $\Re$, $\mathcal{I}$ is its Borel $\sigma$-field and $\leq$ is the ordinary order, then we recover the standard concepts for kernels and resolvents in the literature on Volterra integral equations, such as \cite[Chapter~9]{GriLonSta90}.

Alternatively, if $\leq$ is the \emph{void order}, that is, $s\leq t$ holds for any $s,t\in I$, then $I\times I$ is the triangular set, we have  $[s,t] = I$ and~\eqref{eq:linear Volterra equation} turns into a linear \emph{Fredholm integral equation}, where $(I,\mathcal{I})$ is merely a measurable space. 

In this general setting, for $p\geq 1$ we inductively derive a \emph{resolvent sequence inequality} in the $L^{p}$-norm with respect to $\mu$ by applying Minkowski's inequality and Fubini's theorem.

\begin{Proposition}\label{pr:resolvent sequence inequality}
Assume that $J$ is a set in $I$ of full measure, $(u_{n})_{n\in\N_{0}}$ is a sequence of $[0,\infty]$-valued measurable functions on $I$ and $v:I\rightarrow [0,\infty]$ is measurable such that
\begin{equation}\label{eq:resolvent sequence inequality 1}
u_{n}(t) \leq v(t) + \bigg(\int_{I(t)}\!k(t,s)^{p}u_{n-1}(s)^{p}\,\mu(\mathrm{d}s)\bigg)^{\frac{1}{p}}
\end{equation}
for all $n\in\N$ and $t\in J$. Then
\begin{equation}\label{eq:resolvent sequence inequality 2}
\begin{split}
u_{n}(t) &\leq v(t)  + \sum_{i=1}^{n-1}\bigg(\int_{I(t)}\!\R_{k^{p},\mu,i}(t,s)v(s)^{p}\,\mu(\mathrm{d}s)\bigg)^{\frac{1}{p}}\\
&\quad +  \bigg(\int_{I(t)}\!\R_{k^{p},\mu,n}(t,s)u_{0}(s)^{p}\,\mu(\mathrm{d}s)\bigg)^{\frac{1}{p}}
\end{split}
\end{equation}
for any $n\in\N$ and $t\in J$, and equality holds if $p = 1$ and the inequality~\eqref{eq:resolvent sequence inequality 1} is an equation.
\end{Proposition}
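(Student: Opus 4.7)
The plan is to reduce the claim to the induction principle of Lemma~\ref{le:induction principle}, via Example~\ref{ex:induction principle}, and then to verify the explicit bound~\eqref{eq:resolvent sequence inequality 2} for the iterates $\Psi^{n}(u_{0})$ by a separate induction on $n$.

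First I would invoke Example~\ref{ex:induction principle} with the functional $F(t,s,u):=k(t,s)u(s)$, so that the associated operator is
\[
\Psi(u)(t) = v(t) + \bigg(\int_{I(t)}\!k(t,s)^{p}u(s)^{p}\,\mu(\mathrm{d}s)\bigg)^{1/p}
\]
on the convex cone $\mathcal{D}$ of $[0,\infty]$-valued measurable functions on $I$. The monotonicity hypothesis of that example is immediate, and since $J$ has full measure while assumption~\eqref{eq:resolvent sequence inequality 1} reads precisely as $u_{n}\leq\Psi(u_{n-1})$ on $J$, Lemma~\ref{le:induction principle} yields $u_{n}\leq\Psi^{n}(u_{0})$ on $J$ for every $n\in\N$.

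Next I would prove by induction on $n$ that $\Psi^{n}(u_{0})$ is bounded by the right-hand side of~\eqref{eq:resolvent sequence inequality 2}. The base case $n=1$ follows at once from $\R_{k^{p},\mu,1}=k^{p}$ and the definition of $\Psi$, as the sum over $i$ is empty. For the inductive step I would substitute the inductive bound into $\Psi^{n+1}(u_{0})=\Psi(\Psi^{n}(u_{0}))$ and apply Minkowski's inequality in $L^{p}(I(t),k(t,\cdot)^{p}\mu)$ to distribute the outer $L^{p}$-norm across the sum representing $\Psi^{n}(u_{0})(s)$. This produces one contribution to the new bound for each summand of the inductive hypothesis. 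The key identification, where the iterated kernels actually emerge, is
\[
\int_{I(t)}\!k(t,s)^{p}\int_{I(s)}\!\R_{k^{p},\mu,i}(s,\tilde{s})\,w(\tilde{s})\,\mu(\mathrm{d}\tilde{s})\,\mu(\mathrm{d}s)=\int_{I(t)}\!\R_{k^{p},\mu,i+1}(t,\tilde{s})\,w(\tilde{s})\,\mu(\mathrm{d}\tilde{s})
\]
for $w=v^{p}$ and $w=u_{0}^{p}$. This follows from Fubini's theorem, which on the triangular region $\tilde{s}\leq s\leq t$ permits swapping the two integrations so that the inner $s$-integral collapses into $\R_{k^{p},\mu,i+1}(t,\tilde{s})$ by the recursion~\eqref{eq:resolvent sequence}. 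The resulting index shift is exactly what turns $\sum_{i=1}^{n-1}$ into $\sum_{i=1}^{n}$ and $\R_{k^{p},\mu,n}$ into $\R_{k^{p},\mu,n+1}$, closing the induction. For the equality claim, Minkowski's inequality is an equation when $p=1$ (the $L^{1}$-norm being additive on non-negative functions) and Fubini always preserves equality, so equality propagates from~\eqref{eq:resolvent sequence inequality 1} to~\eqref{eq:resolvent sequence inequality 2}.

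The main obstacle I anticipate is bookkeeping rather than substance: one must verify that Fubini's theorem genuinely applies on the triangular set to all of the non-negative product-measurable integrands involved, and carefully track the preorder constraints $\tilde{s}\leq s\leq t$ that reduce $I(t)\cap I(s)$-type intersections to the intervals $[\tilde{s},t]$ featured in the definition of the resolvent. These verifications are routine given the $\sigma$-finiteness of $\mu$ and the product measurability built into the definitions of $k$ and the iterated kernels $\R_{k^{p},\mu,n}$.
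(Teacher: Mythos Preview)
Your proposal is correct and follows essentially the same approach as the paper: reduce to the iterates $\Psi^{n}(u_{0})$ via Lemma~\ref{le:induction principle} and Example~\ref{ex:induction principle}, then establish~\eqref{eq:resolvent sequence inequality 2} by induction on $n$ using Minkowski's inequality, Fubini's theorem on the triangular region, and the recursion~\eqref{eq:resolvent sequence}. The paper phrases the reduction slightly differently (``it suffices to consider the case that equality holds in~\eqref{eq:resolvent sequence inequality 1}''), but the substance, including the treatment of the equality case for $p=1$, is the same.
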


Under the assumptions of the proposition, if the initial function $u_{0}$ of the sequence $(u_{n})_{n\in\N_{0}}$ happens to satisfy the condition
\begin{equation}\label{eq:resolvent inequality condition 1}
\lim_{n\uparrow\infty}\int_{I(t)}\!\R_{k^{p},\mu,n}(t,s)u_{0}(s)^{p}\,\mu(\mathrm{d}s) = 0
\end{equation}
for each $t\in J$, for which Lemma~\ref{le:sufficient criterion} gives a sufficient criterion, then we may immediately conclude that
\begin{equation*}
\limsup_{n\uparrow\infty} u_{n}(t) \leq v(t) + \sum_{n=1}^{\infty}\bigg(\int_{I(t)}\!\R_{k^{p},\mu,n}(t,s)v(s)^{p}\,\mu(\mathrm{d}s)\bigg)^{\frac{1}{p}}
\end{equation*}
for every $t\in J$. In particular, for $p=1$ this series becomes $\int_{I(t)}\!\R_{k,\mu}(t,s)v(s)\,\mu(\mathrm{d}s)$, by monotone convergence. Hence, we obtain a \emph{resolvent inequality} in the $L^{p}$-norm.

\begin{Corollary}\label{co:resolvent inequality}
Let $J$ be a set in $I$ of full measure and $u,u_{0},v:I\rightarrow [0,\infty]$ be measurable such that $u_{0}(t)\leq u(t)$,
\begin{equation}\label{eq:resolvent inequality 1}
u(t) \leq v(t) + \bigg(\int_{I(t)}\!k(t,s)^{p}u_{0}(s)^{p}\,\mu(\mathrm{d}s)\bigg)^{\frac{1}{p}}
\end{equation}
and condition~\eqref{eq:resolvent inequality condition 1} holds for any $t\in J$. Then
\begin{equation}\label{eq:resolvent inequality 2}
u(t) \leq v(t) + \sum_{n=1}^{\infty}\bigg(\int_{I(t)}\!\R_{k^{p},\mu,n}(t,s)v(s)^{p}\,\mu(\mathrm{d}s)\bigg)^{\frac{1}{p}}
\end{equation}
for every $t\in J$, and equality holds if $p = 1$ and the estimate~\eqref{eq:resolvent inequality 1} is an identity.
\end{Corollary}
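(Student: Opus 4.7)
The plan is to derive the corollary from Proposition~\ref{pr:resolvent sequence inequality} by applying that result to the sequence $(u_{n})_{n\in\N_{0}}$ consisting of the given $u_{0}$ at index $0$ and $u_{n}:=u$ for every $n\geq 1$. The whole argument then splits into three pieces: verifying the recursive hypothesis, invoking the proposition at a generic $n$, and passing to the limit $n\uparrow\infty$.

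Verifying the hypothesis~\eqref{eq:resolvent sequence inequality 1} on $J$ is straightforward. At $n=1$ the required inequality is literally~\eqref{eq:resolvent inequality 1}. For $n\geq 2$, monotonicity of the $L^{p}$-seminorm, combined with $u_{0}\leq u$ on the full-measure set $J$, gives
\begin{equation*}
\bigg(\int_{I(t)}\!k(t,s)^{p}u_{0}(s)^{p}\,\mu(\mathrm{d}s)\bigg)^{\!1/p}\leq\bigg(\int_{I(t)}\!k(t,s)^{p}u(s)^{p}\,\mu(\mathrm{d}s)\bigg)^{\!1/p},
\end{equation*}
and chaining with~\eqref{eq:resolvent inequality 1} produces the required bound, since $u_{n}=u$ and $u_{n-1}=u$ on the right-hand side as well.

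Proposition~\ref{pr:resolvent sequence inequality} then yields, for every $n\in\N$ and $t\in J$,
\begin{equation*}
u(t)\leq v(t)+\sum_{i=1}^{n-1}\bigg(\int_{I(t)}\!\R_{k^{p},\mu,i}(t,s)v(s)^{p}\,\mu(\mathrm{d}s)\bigg)^{\!1/p}+\bigg(\int_{I(t)}\!\R_{k^{p},\mu,n}(t,s)u_{0}(s)^{p}\,\mu(\mathrm{d}s)\bigg)^{\!1/p}.
\end{equation*}
Letting $n\uparrow\infty$, the last term vanishes by hypothesis~\eqref{eq:resolvent inequality condition 1}, while the non-negative partial sums converge monotonically to the series on the right-hand side of~\eqref{eq:resolvent inequality 2}; this proves the inequality.

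For the equality claim with $p=1$ and~\eqref{eq:resolvent inequality 1} being an identity, the chosen sequence satisfies~\eqref{eq:resolvent sequence inequality 1} as an equation at $n=1$, and the equality clause of Proposition~\ref{pr:resolvent sequence inequality} then propagates the identity through the iteration and through the subsequent limit. I do not anticipate a serious obstacle beyond the measure-theoretic bookkeeping needed to transfer pointwise relations on $J$ into estimates between integrals over $I(t)$, for which the hypothesis $\mu(I\setminus J)=0$ is precisely tailored.
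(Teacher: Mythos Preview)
Your proof is correct and mirrors the paper's own argument exactly: apply Proposition~\ref{pr:resolvent sequence inequality} with $u_{n}=u$ for all $n\in\N$ (keeping the given $u_{0}$), then let $n\uparrow\infty$, using~\eqref{eq:resolvent inequality condition 1} to kill the remainder term and monotone convergence of the partial sums for the series. Your phrasing of the equality clause is slightly loose---the Proposition's equality case needs~\eqref{eq:resolvent sequence inequality 1} to be an equation at \emph{every} $n$, not only at $n=1$, which with your sequence amounts to $u=v+\int k\,u$ and hence effectively to $u_{0}=u$---but the paper's one-line proof is equally terse on this point, so you are not diverging from it.
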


If the requirements of the corollary are met, then the estimate~\eqref{eq:resolvent inequality 2} entails that $u(t)$ $\leq v(t) + \esssup_{s\in I(t)}v(s)\mathrm{I}_{k,\mu,p}(t)$ for any $t\in J$, where the measurable series function $\mathrm{I}_{k,\mu,p}:I\rightarrow [0,\infty]$ is given by
\begin{equation}\label{eq:series function}
\mathrm{I}_{k,\mu,p}(t) := \sum_{n=1}^{\infty}\bigg(\int_{I(t)}\!\R_{k^{p},\mu,n}(t,s)\,\mu(\mathrm{d}s)\bigg)^{\frac{1}{p}}.
\end{equation}
In particular, if $v$ and $\mathrm{I}_{k,\mu,p}$ are essentially bounded on $I(t)$ for any $t\in J$, then $u$ satisfies the same property. Similarly, if $v$ and $\mathrm{I}_{k,\mu,p}$ are bounded on $I(t)$ for each $t\in I$, then so is $u$ as soon as $J = I$.

Further, from Corollary~\ref{co:resolvent inequality} we obtain the \emph{$L^{p}$-Gronwall inequalities} in Corollaries~\ref{co:Gronwall sequence inequalities} and~\ref{co:Gronwall inequalities}. Kernels that may fail to satisfy the necessary monotonicity condition~\eqref{eq:monotonicity condition} for this can be handled directly.

\begin{Example}[Fractional $L^{p}$-resolvent inequality]\label{ex:fractional resolvent inequality}
Let $I = I_{1}\times\cdots\times I_{m}$ for some $m\in\N$ and non-degenerate intervals $I_{1},\dots,I_{m}$  in $\Re$ that are bounded from below. Further, let $\mathcal{I}$ be the Borel $\sigma$-field of $I$, the partial order be given by
\begin{equation*}
s\leq t\quad \Leftrightarrow\quad s_{1}\leq t_{1},\dots,s_{m}\leq t_{m}\quad\text{for any $s,t\in I$}
\end{equation*}
and $\mu$ be the Lebesgue measure on $I$. We assume that $k_{0}:I\rightarrow\Re_{+}$ is measurable and increasing, that is, $k_{0}(s)\leq k_{0}(t)$ for all $s,t\in I$ with $s\leq t$, and $\alpha,\beta\in\Re_{+}^{m}$ are such that $\beta_{i} + 1 - \frac{1}{p} < \alpha_{i}$ for each $i\in\{1,\dots,m\}$ and
\begin{equation*}
k(t,s) = k_{0}(t)\prod_{i=1}^{m}(t_{i} - s_{i})^{\alpha_{i} - 1}(s_{i} - t_{i,0})^{-\beta_{i}}
\end{equation*}
for any $s,t\in I$ with $s_{1} < t_{1},\dots,s_{m} < t_{m}$, where $t_{1,0} := \inf I_{1},\dots,t_{m,0} := \inf I_{m}$. Based on Example~\ref{ex:fractional kernel} and Proposition~\ref{pr:kernels on Cartesian products}, the following three assertions hold:
\begin{enumerate}[(1)]
\item Due to Remark~\ref{re:sufficient criterion} and the property~\eqref{eq:limit related to the digamma function} of the gamma function $\Gamma$, the required condition~\eqref{eq:resolvent inequality condition 1} in Corollary~\ref{co:resolvent inequality} is redundant if
\begin{equation}\label{eq:fractional resolvent inequality}
\int_{I(t)}\!u_{0}(s)^{p}\prod_{i=1}^{m}(s_{i} - t_{i,0})^{-\beta_{i}p}\,\mathrm{d}s < \infty\quad\text{for all $t\in J$.}
\end{equation}
For instance, if $t_{1,0}\in I_{1},\dots,t_{m,0}\in I_{m}$, then~\eqref{eq:fractional resolvent inequality} holds if both $\beta = 0$ and $u_{0}$ is locally $p$-fold integrable or $\beta\in [0,\frac{1}{p}[^{m}$ and $u_{0}$ is locally essentially bounded.

\item The number $\alpha_{p,i} := (\alpha_{i} - 1)p + 1$ satisfies $\beta_{i} p < \alpha_{p,i}$ for every $i\in\{1,\dots,m\}$, and the estimate~\eqref{eq:resolvent inequality 2}, the relation~\eqref{eq:fractional kernel 1} and the bound~\eqref{eq:fractional kernel 2} imply that
\begin{align*}
u(t) &\leq v(t)\\
&\quad + c_{\alpha,\beta,p}\sum_{n=1}^{\infty}k_{0}(t)^{n}\bigg(\int_{I(t)}\!v(s)^{p}\prod_{i=1}^{m}\frac{\Gamma(\alpha_{p,i})^{n}(t_{i} - s_{i})^{(\alpha_{p,i} - \beta_{i}p)n +\beta_{i}p - 1}}{\Gamma((\alpha_{p,i} - \beta_{i}p)n + \beta_{i}p)(s_{i} - t_{i,0})^{\beta_{i}p}}\,\mathrm{d}s\bigg)^{\frac{1}{p}} 
\end{align*}
for any $t\in J$ and some $c_{\alpha,\beta,p}  > 0$ that solely depends on $\alpha$, $\beta$ and $p$ such that $c_{\alpha,0,p} = 1$. Further, equality holds if $p = 1$, $k_{0} = 1$, $\beta = 0$ and~\eqref{eq:resolvent inequality 1} is an equation.

\item Under the condition that $\beta\in [0,\frac{1}{p}[^{m}$, we may set $c_{\beta,p} := \prod_{i=1}^{m}\Gamma(1 - \beta_{i}p)^{\frac{1}{p}}$. Then the derived estimate in~(2) cannot exceed the expression
\begin{equation*}
v(t) + \esssup_{s\in I(t)}v(s)c_{\alpha,\beta,p}c_{\beta,p}\sum_{n=1}^{\infty}k_{0}(t)^{n}\prod_{i=1}^{m}\frac{\big(\Gamma(\alpha_{p,i})(t_{i} - t_{i,0})^{\alpha_{p,i} - \beta_{i}p}\big)^{\frac{n}{p}}}{\Gamma((\alpha_{p,i} - \beta_{i}p)n + 1)^{\frac{1}{p}}}
\end{equation*}
for any $t\in I$. Moreover, the appearing series, which agrees with $\mathrm{I}_{k,\mu,p}(t)$ in the case $k_{0} = 1$ and $\beta = 0$, converges absolutely, as it is bounded by the product
\begin{equation*}
\prod_{i=1}^{m}\big(\mathrm{E}_{\alpha_{p,i} - \beta_{i}p,1,p}\big(k_{0}(t)^{\frac{1}{m}}\Gamma(\alpha_{p,i})^{\frac{1}{p}}(t_{i} - t_{i,0})^{\frac{\alpha_{p,i}}{p} - \beta_{i}}\big) - 1\big)
\end{equation*}
with the entire function $\mathrm{E}_{\alpha_{p,i} - \beta_{i}p,1,p}$, where $i\in\{1,\dots,m\}$, that is generally given by~\eqref{eq:extension of the Mittag-Leffler function} and extends the Mittag-Leffler function.
\end{enumerate}

In particular, for $m = p = 1$ this verifies that Corollary~\ref{co:resolvent inequality} yields the inequalities in \cite[p.~188]{Hen81} and \cite[Theorem~1]{YeGaoDin07} and gives a sharper estimate than in \cite[Theorem~3.2]{Web19} under weaker assumptions.
\end{Example}

As a consequence of Corollary~\ref{co:resolvent inequality}, on a non-empty set $X$ we can provide sufficient conditions for a map $\Psi:X\rightarrow X$ to admit at most a unique fixed point by employing distance functions that may take the value infinity or more general type of functions.

Namely, a \emph{premetric} on $X$ is an $[0,\infty]$-valued function on $X\times X$ that satisfies all the properties of a metric except being finite. Such a premetric induces a topology just as a metric and the concept of a metrisable space extends to that of a \emph{premetrisable} one.

A \emph{pseudopremetric} is defined in the same sense by considering a pseudometric instead of a metric, and we shall first assume that $(d_{t})_{t\in I}$ is a family of $[0,\infty]$-valued functions on $X\times X$ such that for any $x,\tilde{x}\in X$ the relation
\begin{equation}\label{eq:pseudometrical condition 1}
x=\tilde{x}\quad\Leftrightarrow\quad d_{t}(x,\tilde{x}) = 0\quad\text{for all $t\in I$}
\end{equation}
holds and the function
\begin{equation}\label{eq:pseudometrical condition 2}
\text{$I\rightarrow [0,\infty],\quad t\mapsto d_{t}(x,\tilde{x})$ is measurable.}
\end{equation}
Thereby, $I$ is merely endowed with the $\sigma$-field $\mathcal{I}$ and the preorder $\leq$, which we interpret as \emph{evolution}, possibly in time, such that $\{(t,s)\in I\times I\,|\,s\leq t\}\in\mathcal{I}\otimes\mathcal{I}$. Let us consider the following regularity condition on $\Psi$ for the \emph{uniqueness of fixed points}:
\begin{enumerate}[label=(C.\arabic*), ref=C.\arabic*, leftmargin=\widthof{(C.1)} + \labelsep]
\item\label{co:1} There are a function $\Lambda:I\times X\times X\rightarrow [0,\infty]$, a $\sigma$-finite measure $\mu$ on $\mathcal{I}$, $p\geq 1$ and a non-negative kernel $\lambda$ on $I$ such that $\Lambda(\cdot,x,\tilde{x})$ is measurable,
\begin{equation*}
d_{t}\big(\Psi(x),\Psi(\tilde{x})\big) \leq \bigg(\int_{I(t)}\!\lambda(t,s)^{p}\Lambda(s,x,\tilde{x})^{p}\,\mu(\mathrm{d}s)\bigg)^{\frac{1}{p}}
\end{equation*}
and $\Lambda(t,x,\tilde{x}) \leq d_{t}(x,\tilde{x})$ for all $t\in I$ and $x,\tilde{x}\in X$.
\end{enumerate}

We observe that if $(d_{t})_{t\in I}$ is a family of pseudopremetrics that is increasing in the sense that $d_{s}\leq d_{t}$ for any $s,t\in I$ with $s\leq t$, then~\eqref{co:1} entails the Lipschitz condition
\begin{equation*}
d_{t}\big(\Psi(x),\Psi(\tilde{x})\big) \leq \lambda_{0}(t)d_{t}(x,\tilde{x})
\end{equation*}
for all $t\in I$ and $x,\tilde{x}\in X$ with the Lipschitz constant $\lambda_{0}(t) := (\int_{I(t)}\!\lambda(t,s)^{p}\,\mu(\mathrm{d}s))^{\frac{1}{p}}$ as soon as $\lambda(t,\cdot)$ is $p$-fold integrable for each $t\in I$.

\begin{Lemma}\label{le:uniqueness of fixed points}
Let~\eqref{co:1} hold and suppose that
\begin{equation}\label{eq:condition for unique fixed points}
\lim_{n\uparrow\infty}\int_{I(t)}\!\R_{\lambda^{p},\mu,,n}(t,s)\Lambda(s,x,\tilde{x})^{p}\,\mu(\mathrm{d}s) = 0
\end{equation}
for any $t\in I$ and $x,\tilde{x}\in \Psi(X)$. Then $\Psi$ admits at most a unique fixed point.
\end{Lemma}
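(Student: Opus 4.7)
The plan is to derive a contradiction-style argument that forces $d_{t}(x,\tilde{x}) = 0$ for every $t\in I$ whenever $x,\tilde{x}$ are fixed points, and then invoke~\eqref{eq:pseudometrical condition 1} to conclude $x = \tilde{x}$. The whole proof should reduce to a single direct application of Corollary~\ref{co:resolvent inequality} with the zero function on the right-hand side.

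First I would fix two fixed points $x,\tilde{x}$ of $\Psi$ and observe that $x = \Psi(x)$ and $\tilde{x} = \Psi(\tilde{x})$, so in particular $x,\tilde{x}\in\Psi(X)$. The first inequality in~\eqref{co:1} then gives
\begin{equation*}
d_{t}(x,\tilde{x}) = d_{t}\big(\Psi(x),\Psi(\tilde{x})\big) \leq \bigg(\int_{I(t)}\!\lambda(t,s)^{p}\Lambda(s,x,\tilde{x})^{p}\,\mu(\mathrm{d}s)\bigg)^{\frac{1}{p}}
\end{equation*}
for every $t\in I$. This already has the shape of the integral inequality~\eqref{eq:resolvent inequality 1} from Corollary~\ref{co:resolvent inequality}, with $v\equiv 0$.

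Next I would set $u(t):= d_{t}(x,\tilde{x})$, $u_{0}(t):= \Lambda(t,x,\tilde{x})$, $v\equiv 0$, $k:= \lambda$, and $J:= I$. The measurability of $u$ is provided by~\eqref{eq:pseudometrical condition 2}, the measurability of $u_{0}$ by the measurability hypothesis on $\Lambda(\cdot,x,\tilde{x})$ in~\eqref{co:1}, and the domination $u_{0}\leq u$ by the second inequality in~\eqref{co:1}. The assumption~\eqref{eq:condition for unique fixed points}, specialised to our pair $x,\tilde{x}\in\Psi(X)$, is exactly the required limit condition~\eqref{eq:resolvent inequality condition 1}. Hence all hypotheses of Corollary~\ref{co:resolvent inequality} are satisfied.

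Applying the corollary, the conclusion~\eqref{eq:resolvent inequality 2} with $v\equiv 0$ collapses to $d_{t}(x,\tilde{x})\leq 0$ for every $t\in I$, so $d_{t}(x,\tilde{x}) = 0$ for all $t\in I$ and~\eqref{eq:pseudometrical condition 1} yields $x = \tilde{x}$. The argument has no genuine obstacle; the only point that needs care is the matching of data to Corollary~\ref{co:resolvent inequality}, in particular verifying that fixed points automatically lie in $\Psi(X)$ so that the quantified hypothesis~\eqref{eq:condition for unique fixed points} applies to the pair under consideration, and that the measurability provisions in~\eqref{co:1} and~\eqref{eq:pseudometrical condition 2} supply the measurability required of $u$ and $u_{0}$.
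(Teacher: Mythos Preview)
Your proposal is correct and matches the paper's own proof essentially step for step: define $u(t)=d_{t}(x,\tilde{x})$ and $u_{0}(t)=\Lambda(t,x,\tilde{x})$, verify the hypotheses of Corollary~\ref{co:resolvent inequality} with $v\equiv 0$ and $k=\lambda$, and conclude $u=0$ via~\eqref{eq:pseudometrical condition 1}. Your version is slightly more explicit about why $x,\tilde{x}\in\Psi(X)$ and about the measurability checks, but the argument is the same.
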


Let us continue with the following regularity condition on $(d_{t})_{t\in I}$ for the \emph{derivation of fixed points} of $\Psi$ as limits of Picard iterations:
\begin{enumerate}[label=(C.\arabic*), ref=C.\arabic*, leftmargin=\widthof{(C.2)} + \labelsep]
\setcounter{enumi}{1}
\item\label{co:2} The function $d_{t}$ is a pseudopremetric for each $t\in I$. 
\end{enumerate}

Under~\eqref{co:2}, we may equip $X$ with the topology of converge with respect to $d_{t}$ for each $t\in I$. That is, a sequence $(x_{n})_{n\in\N}$ in $X$ converges to some $x\in X$ if and only if
\begin{equation}\label{eq:underlying topology}
\lim_{n\uparrow\infty} d_{t}(x_{n},x) = 0\quad\text{for all $t\in I$.}
\end{equation}
By the relation~\eqref{eq:pseudometrical condition 1}, this constitutes the unique topology that is finer than the one induced by $d_{t}$ for any $t\in I$ such that a sequence in $X$ converges if it converges relative to $d_{t}$ for any $t\in I$. Moreover, $X$ turns into a sequential space.

Hence, we shall call $(d_{t})_{t\in I}$ \emph{complete} if any sequence in $X$ that is Cauchy relative to $d_{t}$ for every $t\in I$ converges. For instance, this property holds if $X$ admits a complete premetric $d$ such that every sequence in $X$ that is Cauchy relative to $d_{t}$ for each $t\in I$ is Cauchy with respect to $d$.

We are now in a position to infer a \emph{fixed point result for evolutions operators} from Proposition~\ref{pr:resolvent sequence inequality} that includes an \emph{error estimate} for the appearing Picard iteration.

\begin{Theorem}\label{th:fixed point}
Let~\eqref{co:1} and~\eqref{co:2} be valid and $(d_{t})_{t\in I}$ be complete. Further, let $\Psi$ be sequentially continuous and $x_{0}\in X$ be such that
\begin{equation}\label{eq:fixed point condition}
\sum_{n=1}^{\infty}\bigg(\int_{I(t)}\!\R_{\lambda^{p},\mu,n}(t,s)\Lambda(s,x_{0},\Psi(x_{0}))^{p}\,\mu(\mathrm{d}s)\bigg)^{\frac{1}{p}} < \infty
\end{equation}
for all $t\in I$. Then the Picard sequence $(x_{n})_{n\in\N}$ in $X$ recursively given by $x_{n}:= \Psi(x_{n-1})$ converges to a fixed point $\hat{x}$ of $\Psi$ and satisfies
\begin{equation}\label{eq:fixed point error estimate}
d_{t}(x_{n},\hat{x}) \leq \sum_{i=n}^{\infty}\bigg(\int_{I(t)}\!\R_{\lambda^{p},\mu,i}(t,s)\Lambda(s,x_{0},\Psi(x_{0}))^{p}\,\mu(\mathrm{d}s)\bigg)^{\frac{1}{p}}
\end{equation}
for any $n\in\N$ and $t\in I$. Moreover, if~\eqref{eq:condition for unique fixed points} holds for all $t\in I$ and $x,\tilde{x}\in\Psi(X)$, then $\hat{x}$ is the only fixed point of $\Psi$.
\end{Theorem}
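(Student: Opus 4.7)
The plan is to apply Proposition~\ref{pr:resolvent sequence inequality} to the sequence $w_n(t) := \Lambda(t,x_n,x_{n+1})$, derive a summable bound on the successive distances $d_t(x_n,x_{n+1})$, and then use the triangle inequality for $d_t$ provided by~\eqref{co:2} to conclude that the Picard sequence is Cauchy relative to each $d_t$. Completeness will yield a limit $\hat{x}$, which is a fixed point by sequential continuity of $\Psi$, and uniqueness follows from Lemma~\ref{le:uniqueness of fixed points}.

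Concretely, condition~\eqref{co:1} combined with $\Lambda\leq d$ gives, for every $n\geq 1$,
\begin{equation*}
w_n(t)\leq d_t(x_n,x_{n+1}) = d_t(\Psi(x_{n-1}),\Psi(x_n)) \leq \bigg(\int_{I(t)}\lambda(t,s)^p w_{n-1}(s)^p\,\mu(\mathrm{d}s)\bigg)^{1/p}.
\end{equation*}
Proposition~\ref{pr:resolvent sequence inequality} with $v\equiv 0$ then yields $w_{n-1}(s)^p\leq \int_{I(s)}\R_{\lambda^p,\mu,n-1}(s,r)w_0(r)^p\,\mu(\mathrm{d}r)$. Inserting this into the bound on $d_t(x_n,x_{n+1})^p$, swapping the order of integration via Fubini and invoking the recursion~\eqref{eq:resolvent sequence} produces
\begin{equation*}
d_t(x_n,x_{n+1}) \leq \bigg(\int_{I(t)}\R_{\lambda^p,\mu,n}(t,s)\Lambda(s,x_0,\Psi(x_0))^p\,\mu(\mathrm{d}s)\bigg)^{1/p}
\end{equation*}
for every $n\geq 1$; for $n=1$ this coincides with the direct application of~\eqref{co:1} since $\R_{\lambda^p,\mu,1} = \lambda^p$.

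The triangle inequality for $d_t$ from~\eqref{co:2} now bounds $d_t(x_n,x_{n+k})$ by the partial sum $\sum_{i=n}^{n+k-1}(\cdots)^{1/p}$, and the summability assumption~\eqref{eq:fixed point condition} forces the tail from index $n$ to vanish as $n\to\infty$. Hence $(x_n)$ is Cauchy relative to every $d_t$, and completeness of $(d_t)_{t\in I}$ provides a limit $\hat{x}\in X$. Sequential continuity of $\Psi$ gives $\Psi(x_n)\to\Psi(\hat{x})$; since sequential limits are unique by the pseudopremetric axioms combined with~\eqref{eq:pseudometrical condition 1}, the identity $x_{n+1}=\Psi(x_n)$ forces $\Psi(\hat{x})=\hat{x}$. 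Letting $k\to\infty$ in $d_t(x_n,\hat{x})\leq d_t(x_n,x_{n+k}) + d_t(x_{n+k},\hat{x})$ yields the error estimate~\eqref{eq:fixed point error estimate}, and under the extra hypothesis~\eqref{eq:condition for unique fixed points}, Lemma~\ref{le:uniqueness of fixed points} excludes any further fixed point.

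The main obstacle is the propagation of the tighter $\Lambda$-bound through the Fubini step: to land on $\R_{\lambda^p,\mu,n}$ rather than on $\lambda^p$ paired with $\R_{\lambda^p,\mu,n-1}$ in the wrong order, one must peel off the outermost application of~\eqref{co:1} \emph{before} invoking Proposition~\ref{pr:resolvent sequence inequality} on $w_{n-1}$. This extra bookkeeping is precisely what allows the estimate~\eqref{eq:fixed point error estimate} to involve $\Lambda$ rather than the coarser quantity $d_{\cdot}(x_0,\Psi(x_0))$, and is essentially the only non-routine step in the argument.
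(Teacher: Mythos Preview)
Your proof is correct and follows essentially the same route as the paper: bound $d_t(x_n,x_{n+1})$ via Proposition~\ref{pr:resolvent sequence inequality}, telescope with the triangle inequality to get a Cauchy sequence, pass to the limit by completeness and sequential continuity, and invoke Lemma~\ref{le:uniqueness of fixed points} for uniqueness. The only cosmetic difference is that your explicit ``peel off the outermost layer and Fubini'' step can be absorbed into a single invocation of Proposition~\ref{pr:resolvent sequence inequality} by choosing the sequence $u_0(t):=\Lambda(t,x_0,\Psi(x_0))$ and $u_n(t):=d_t(x_n,x_{n+1})$ for $n\geq 1$, which already satisfies~\eqref{eq:resolvent sequence inequality 1} with $v=0$ and directly yields the bound involving $\Lambda$ rather than $d$.
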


\begin{Example}\label{ex:fixed point thm}
Let $I=\{t\}$ and $d_{t} = d$ for one $t\in I$ and a metric $d$ inducing the topology of $X$ and let $\Psi$ be Lipschitz continuous with Lipschitz constant $\lambda_{0}$. Then~\eqref{co:1} holds for
\begin{equation*}
p = \mu(\{t\}) = 1,\quad \lambda = \lambda_{0}\quad\text{and}\quad \Lambda = d
\end{equation*}
and we recover the geometric series $\R_{\lambda,\mu} = \sum_{n=1}^{\infty} \lambda_{0}^{n}$, which is finite if and only if $\lambda_{0} < 1$. In this case,~\eqref{eq:condition for unique fixed points} and~\eqref{eq:fixed point condition} are always valid for any $t\in I$ and $x,\tilde{x},x_{0}\in X$. So, if 
\begin{equation*}
\text{$d$ is complete and $\lambda_{0} < 1$,}
\end{equation*}
then Theorem~\ref{th:fixed point} reduces to \emph{Banach's fixed point theorem} that originates from \cite{Ban1922}, ensuring a unique fixed point of $\Psi$, with the error bound~\eqref{eq:fixed point error estimate}, which reduces to
\begin{equation*}
d(x_{n},\hat{x})\leq d\big(x_{0},\Psi(x_{0})\big)\frac{\lambda_{0}^{n}}{1-\lambda_{0}}\quad\text{for all $n\in\N$}.
\end{equation*}
\end{Example}

This article is structured as follows. Section~\ref{se:2} is concerned with a detailed analysis of the three main contributions, the resolvent inequalities in Proposition~\ref{pr:resolvent sequence inequality} and Corollary~\ref{co:resolvent inequality} and the fixed point result in Theorem~\ref{th:fixed point}. First, Section~\ref{se:2.1} discusses various properties of kernels and resolvent sequences and considers three types of kernels that can be handled in a general manner.

In Section~\ref{se:2.2} we treat resolvent sequences of sums of kernels and focus on transformed fractional kernels that may admit singularities. In Section~\ref{se:2.3} we deal with kernels on preordered Cartesian products and derive the Gronwall inequalities for functions of several variables in Corollaries~\ref{co:Gronwall sequence inequalities} and~\ref{co:Gronwall inequalities}. Finally, relevant applications of Theorem~\ref{th:fixed point} are indicated in Section~\ref{se:2.4} and the proofs of all the results are contained in Section~\ref{se:3}.

\section{Analysis of the main and consequential results}\label{se:2}

By convention, if $I$ is a non-degenerate interval in $\Re$ with infimum $t_{0}$ and supremum $t_{1}$ and $\leq$ is the ordinary order, then for each monotone function $f:I\rightarrow\Re$ we set $f(t_{0}) := \lim_{t\downarrow t_{0}} f(t)$, if $t_{0}\notin I$, and $f(t_{1}) := \lim_{t\uparrow t_{1}} f(t)$, if $t_{1}\notin I$.

\subsection{Properties and types of kernels and resolvent sequences}\label{se:2.1}

We summarise basic facts on kernels and their resolvent sequences, provide a sufficient criterion for the limit~\eqref{eq:resolvent inequality condition 1} to hold and consider three classes of kernels to which the resolvent inequalities in Proposition~\ref{pr:resolvent sequence inequality} and Corollary~\ref{co:resolvent inequality} directly apply.

First, for an underlying $\sigma$-finite measure $\mu$ on $\mathcal{I}$ and a non-negative kernel $k$ on $I$, the recursive definition~\eqref{eq:resolvent sequence} of the resolvent sequence $(\R_{k,\mu,n})_{n\in\N}$ immediately entails that
\begin{equation*}
\R_{k,\mu,m+n}(t,s) = \int_{[s,t]}\!\R_{k,\mu,m}(t,\tilde{s})\R_{k,\mu,n}(\tilde{s},s)\,\mu(\mathrm{d}\tilde{s})
\end{equation*}
for all $m,n\in\N$ and $s,t\in I$ with $s\leq t$, as can be inductively checked. Hence, by viewing $\R_{k,\mu}$ as pointwise supremum of the sequence $(\sum_{i=1}^{n}\R_{k,\mu,i})_{n\in\N}$ of the partial sums, we deduce from monotone convergence that
\begin{equation*}
\R_{k,\mu}(t,s) = \R_{k,\mu,1}(t,s) + \cdots + \R_{k,\mu,m}(t,s) + \int_{[s,t]}\!\R_{k,\mu,m}(t,\tilde{s})\R_{k,\mu}(\tilde{s},s)\,\mu(\mathrm{d}\tilde{s}).
\end{equation*}
In particular, this justifies that the generalised linear Volterra integral equation~\eqref{eq:linear Volterra equation} is indeed solved by $\R_{k,\mu}(\cdot,s)$ for each $s\in I$.

We readily notice that $\R_{k,\mu,n}$ and $\R_{k,\mu}$, where $n\in\N$, depend superadditively and monotonically on $k$. That is, for another non-negative kernel $l$ on $I$ we have
\begin{equation*}
\R_{k + l,\mu,n}\geq \R_{k,\mu,n} + \R_{l,\mu,n}\quad\text{and}\quad \R_{k+l,\mu}\geq \R_{k,\mu} + \R_{l,\mu},
\end{equation*}
and from $k\leq l$ it follows that $\R_{k,\mu,n} \leq \R_{l,\mu,n}$ and $\R_{k,\mu} \leq \R_{l,\mu}$. If $k$ is of the form $k(t,s)$ $= k_{0}(t)$ for all $s,t\in I$ with $s\leq t$ and some measurable function $k_{0}:I\rightarrow [0,\infty]$ that is increasing or decreasing, then
\begin{equation}\label{eq:basic fact}
\R_{kl,\mu,n}(t,s) \leq \max\{k_{0}(s),k_{0}(t)\}^{n}\R_{l,\mu,n}(t,s)
\end{equation}
for any $s,t\in I$ with $s\leq t$, and equality holds if $k_{0}$ is constant. Similarly, if there is a measurable monotone function $l_{0}:I\rightarrow [0,\infty]$ such that $l(t,s)$ $= l_{0}(s)$ for any $s,t\in I$ with $s\leq t$, then
\begin{equation}\label{eq:basic fact 2}
\R_{kl,\mu,n}(t,s) \leq \R_{k,\mu,n}(t,s)\max\{l_{0}(s),l_{0}(t)\}^{n}
\end{equation}
for all $s,t\in I$ with $s\leq t$, which becomes an equality if $l_{0}$ is actually constant. Moreover, we observe that the \emph{monotonicity condition},
\begin{equation}\label{eq:monotonicity condition}
k(\tilde{s},s) \leq k(t,s)\quad\text{for all $s,\tilde{s},t\in I$ with $s\leq \tilde{s}\leq t$,}
\end{equation}
which we could impose on the kernel $k$, carries over to $\R_{k,\mu,n}$ and $\R_{k,\mu}$. This amounts to $\R_{k,\mu,n}(\tilde{s},s) \leq \R_{k,\mu,n}(t,s)$ and $\R_{k,\mu}(\tilde{s},s) \leq \R_{k,\mu}(t,s)$ for any $s,\tilde{s},t\in I$ with $s\leq\tilde{s}\leq t$.

Based on these considerations, we stress the fact that for $p\geq 1$ and a measurable function $u_{0}:I\rightarrow [0,\infty]$, the required condition~\eqref{eq:resolvent inequality condition 1} in Corollary~\ref{co:resolvent inequality} holds as soon as
\begin{equation}\label{eq:resolvent inequality condition 2}
\sum_{n=1}^{\infty}\bigg(\int_{I(t)}\!\R_{k^{p},\mu,n}(t,s)u_{0}(s)^{p}\,\mu(\mathrm{d}s)\bigg)^{\frac{1}{p}} < \infty
\end{equation}
for all $t\in I$, as otherwise this series cannot be finite. Apparently, this stronger condition is satisfied whenever $u_{0}$ is essentially bounded on $I(t)$ and the series $\mathrm{I}_{k,\mu,p}(t)$ in~\eqref{eq:series function} is finite for each $t\in I$.

In particular, the resolvent inequality applies to the following \emph{tractable class of kernels}, which leads to the Gronwall inequalities in Section~\ref{se:2.3}.

\begin{Definition}\label{de:set of kernels}
Let $K_{\mu}^{p}(I)$ denote the set of all non-negative kernels $k$ on $I$ that satisfy the monotonicity condition~\eqref{eq:monotonicity condition} and for which the series $\mathrm{I}_{k,\mu,p}(t)$ in~\eqref{eq:series function} is finite for each $t\in I$.
\end{Definition}

Based on these concepts, let us investigate the resolvent sequences, the resolvents and the series functions introduced in~\eqref{eq:series function} for three types of kernels.

\begin{Example}[Regular kernels on intervals]\label{ex:regular kernels on intervals}
Let $I$ be a non-degenerate interval in $\Re$, $\mathcal{I}$ be its Borel $\sigma$-field, $\leq$ be the ordinary order and $\mu(\{t\}) = 0$ for all $t\in I$. Then for a non-negative kernel $k$ on $I$ satisfying~\eqref{eq:monotonicity condition} and
\begin{equation*}
\int_{s}^{t}\!k(t,\tilde{s})^{p}\,\mu(\mathrm{d}\tilde{s}) < \infty\quad\text{
for any $s,t\in I$ with $s\leq t$,}
\end{equation*}
it follows inductively from the fundamental theorem of calculus for Riemann-Stieltjes integrals and the associativity of the Lebesgue-Stieltjes integral that
\begin{equation}\label{eq:regular kernels on intervals 1}
\R_{k^{p},\mu,n}(t,s) \leq \frac{k(t,s)^{p}}{(n-1)!}\bigg(\int_{s}^{t}\!k(t,\tilde{s})^{p}\,\mu(\mathrm{d}\tilde{s})\bigg)^{n-1}
\end{equation}
for all $n\in\N$ and $s,t\in I$ with $s\leq t$. The derivation of this estimate is based on the continuity of the integral function $[s,t]\rightarrow\Re_{+}$, $s'\mapsto\int_{s}^{s'}\!k(t,\tilde{s})^{p}\,\mu(\mathrm{d}\tilde{s})$ that is of bounded variation. Hence,
\begin{equation}\label{eq:regular kernels on intervals 2}
\R_{k^{p},\mu}(t,s) \leq k(t,s)^{p}e^{\int_{s}^{t}\!k(t,\tilde{s})^{p}\,\mu(\mathrm{d}\tilde{s})}
\end{equation}
for any $s,t\in I$ with $s\leq t$. Moreover, another application of the fundamental theorem of calculus for Riemann-Stieltjes integrals in combination with monotone convergence yields that
\begin{equation}\label{eq:regular kernels on intervals 3}
\mathrm{I}_{k,\mu,p}(t) \leq \sum_{n=1}^{\infty}\bigg(\frac{1}{n!}\bigg)^{\frac{1}{p}}\bigg(\int_{I(t)}\!k(t,s)^{p}\,\mu(\mathrm{d}s)\bigg)^{\frac{n}{p}}
\end{equation}
for each $t\in I$, and the series on the right-hand side is finite if $\int_{I(t)}\!k(t,s)^{p}\,\mu(\mathrm{ds}) < \infty$. In this case, it equals $\mathrm{E}_{1,1,p}((\int_{I(t)}\!k(t,s)^{p}\,\mu(\mathrm{ds}))^{\frac{1}{p}})$ with the entire function in~\eqref{eq:extension of the Mittag-Leffler function}.

For instance, if $k_{0}:I\rightarrow \Re_{+}$ is increasing and $k_{1}:I\rightarrow [0,\infty]$ is measurable and locally $p$-fold integrable, then $k$ could be of the form
\begin{equation*}
k(t,s) = k_{0}(t)k_{1}(s)\quad\text{for any $s,t\in I$ with $s\leq t$,}
\end{equation*}
and the inequalities~\eqref{eq:regular kernels on intervals 1}-\eqref{eq:regular kernels on intervals 3} become identities whenever $k_{0} = 1$. In conclusion, this shows us that $K_{\mu}^{p}(I)$ consists of all non-negative kernels $k$ on $I$ satisfying~\eqref{eq:monotonicity condition} and $\int_{I(t)}\!k(t,s)^{p}\,\mu(\mathrm{ds}) < \infty$ for any $t\in I$.
\end{Example}

\begin{Example}[The void preorder and kernels of one variable]\label{ex:void preorder}
Let $s\leq t$ always hold for any $s,t\in I$. Then a non-negative kernel $k$ on $I$ satisfies~\eqref{eq:monotonicity condition} if and only if
\begin{equation*}
k(t,s) = k_{0}(s)\quad\text{for any $s,t\in I$}
\end{equation*}
and some measurable function $k_{0}:I\rightarrow [0,\infty]$. In this case, $\R_{k^{p},\mu,n}$ and $\R_{k^{p},\mu}$ are also independent of the first variable $t\in I$ and we have
\begin{equation*}
\R_{k^{p},\mu,n}(\cdot,s) = k_{0}(s)^{p}\bigg(\int_{I}\!k_{0}(t)^{p}\,\mu(\mathrm{d}t)\bigg)^{n-1}
\end{equation*}
for all $n\in\N$ and $s\in I$, which entails that $\R_{k^{p},\mu}(\cdot,s) = k_{0}(s)^{p}\sum_{n=0}^{\infty}(\int_{I}\!k_{0}(t)^{p}\,\mu(\mathrm{d}t))^{n}$. Therefore, for such a kernel $k$ the series function $\mathrm{I}_{k,\mu,p}$ is constant and satisfies
\begin{equation*}
\mathrm{I}_{k,\mu,p} = \sum_{n=1}^{\infty}\bigg(\int_{I}\!k_{0}(t)^{p}\,\mu(\mathrm{d}t)\bigg)^{\frac{n}{p}},
\end{equation*}
and the ratio test shows that $k\in K_{\mu}^{p}(I)$ if and only if $\int_{I}\!k_{0}(t)^{p}\,\mu(\mathrm{d}t) < 1$.
\end{Example}

\begin{Example}[Kernels of submultiplicative type]
Let $k$ be a non-negative kernel on $I$ such that $k(t,\tilde{s})k(\tilde{s},s) \leq k(t,s)$ for any $s,\tilde{s},t\in I$ with $s\leq \tilde{s} \leq t$. Then for any other non-negative kernel $l$ on $I$ we have
\begin{equation*}
\R_{(k l)^{p},\mu,n} \leq k^{p}\R_{l^{p},\mu,n}\quad\text{for all $n\in\N$}\quad\text{and}\quad \R_{(k l)^{p},\mu} \leq k^{p}\R_{l^{p},\mu},
\end{equation*}
which implies that $\mathrm{I}_{kl,\mu,p}(t) \leq \sum_{n=1}^{\infty}(\int_{I(t)}\!k(t,s)^{p}\R_{l^{p},\mu,n}(t,s)\,\mu(\mathrm{d}s))^{\frac{1}{p}}$ for all $t\in I$. Further, if $k$ is of \emph{multiplicative type}, that is, equality holds in the above required property, then all these inequalities become equations.

In particular, let $I$ be a non-degenerate interval in $\Re$, $\mathcal{I}$ be its Borel $\sigma$-field, $\leq$ be the ordinary order, $\mu(\{t\}) = 0$ for any $t\in I$ and $\mu$ be finite on compact sets. Then
\begin{equation*}
\R_{k^{p},\mu,n}(t,s) \leq \frac{k(t,s)^{p}}{(n-1)!}\mu([s,t])^{n-1}\quad\text{and}\quad \R_{k^{p},\mu}(t,s) \leq k(t,s)^{p}e^{\mu([s,t])}
\end{equation*}
for any $n\in\N$ and $s,t\in I$ with $s\leq t$, according to Example~\ref{ex:regular kernels on intervals}. In addition, for the integral function $\hat{k}_{p}:I\rightarrow [0,\infty]$ defined by $\hat{k}_{p}(t) := \int_{I(t)}\!k(t,s)^{p}\,\mu(\mathrm{d}s)$ it follows that
\begin{align*}
\mathrm{I}_{k,\mu,p}(t) &\leq \sum_{n=0}^{\infty}\bigg(\int_{I(t)}\!\frac{k(t,s)^{p}}{n!}\mu([s,t])^{n}\,\mu(\mathrm{d}s)\bigg)^{\frac{1}{p}} \leq \hat{k}_{p}(t)^{\frac{1}{p}}\sum_{n=0}^{\infty}\bigg(\frac{1}{n!}\bigg)^{\frac{1}{p}}\mu\big(I(t)\big)^{\frac{n}{p}}
\end{align*}
for each $t\in I$, and the last estimate is finite if $\mu(I(t))$ and $\hat{k}_{p}(t)$ are. As in the general setting before, all preceding inequalities, except the last one, turn into equalities if $k$ is of multiplicative type.

For instance, this is the case if $\nu$ is a signed Borel measure on $I$ that is finite on compact sets and satisfies $\nu(\{t\}) = 0$ and $k(t,s) = \exp(\nu([s,t]))$ for any $s,t\in I$ with $s\leq t$.
\end{Example}

For a non-negative kernel $k$ on $I$ H\"{o}lder's inequality gives a sufficient criterion for~\eqref{eq:resolvent inequality condition 1} that is weaker than~\eqref{eq:resolvent inequality condition 2} for any $t\in I$ and every measurable function $u_{0}:I\rightarrow [0,\infty]$.

\begin{Lemma}\label{le:sufficient criterion}
Let $(k_{n})_{n\in\N}$ and $(l_{n})_{n\in\N}$ be two sequences of non-negative kernels on $I$ such that for $t\in I$ we have
\begin{equation*}
\R_{k^{p},\mu,n}(t,s) \leq k_{n}(t,s) l_{n}(t,s)\quad\text{for $\mu$-a.e.~$s\in I(t)$}\quad\text{for almost all $n\in\N$.}
\end{equation*}
Further, for each $n\in\N$ let $p_{n}\in [1,\infty]$ and $q_{n}$ be its dual exponent. Then
\begin{equation}\label{eq:sufficient criterion}
\limsup_{n\uparrow\infty}\int_{I(t)}\!\R_{k^{p},\mu,n}(t,s)u_{0}(s)^{p}\,\mu(\mathrm{d}s)\leq \limsup_{n\uparrow\infty} \|k_{n}(t,\cdot)\|_{p_{n},t}\|l_{n}(t,\cdot)u_{0}^{p}\|_{q_{n},t}
\end{equation}
for every measurable function $u_{0}:I\rightarrow [0,\infty]$, where
\begin{equation*}
\|l\|_{q,t} := \bigg(\int_{I(t)}\!l(s)^{q}\,\mu(\mathrm{d}s)\bigg)^{\frac{1}{q}}\quad\text{and}\quad \|l\|_{\infty,t} := \esssup_{s\in I(t)} l(s)
\end{equation*}
for each $q\geq 1$ and any measurable function $l:I(t)\rightarrow [0,\infty]$. In particular, if the estimate in~\eqref{eq:sufficient criterion} vanishes, then~\eqref{eq:resolvent inequality condition 1} is satisfied.
\end{Lemma}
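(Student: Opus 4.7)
The statement is essentially an application of Hölder's inequality combined with passing to the $\limsup$, so the proof plan is short and the only subtlety is bookkeeping the almost-everywhere and ``almost all $n$'' qualifiers.

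First, I would fix $t\in I$ and let $N\in\N$ be such that the pointwise bound $\R_{k^p,\mu,n}(t,s)\leq k_n(t,s)l_n(t,s)$ holds for $\mu$-a.e. $s\in I(t)$ for every $n\geq N$. Since modifying integrands on $\mu$-null sets does not affect the integrals, monotonicity of the Lebesgue integral gives
\begin{equation*}
\int_{I(t)}\!\R_{k^p,\mu,n}(t,s)u_0(s)^p\,\mu(\mathrm{d}s)\leq \int_{I(t)}\!k_n(t,s)\bigl(l_n(t,s)u_0(s)^p\bigr)\,\mu(\mathrm{d}s)
\end{equation*}
for every $n\geq N$ and every measurable $u_0:I\rightarrow [0,\infty]$.

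Next, I would apply H\"{o}lder's inequality on the measure space $(I(t),\mathcal{I}|_{I(t)},\mu|_{I(t)})$ with conjugate exponents $p_n,q_n\in [1,\infty]$ to the factorisation $k_n(t,\cdot)\cdot \bigl(l_n(t,\cdot)u_0^p\bigr)$. Using the notation introduced in the lemma, this yields
\begin{equation*}
\int_{I(t)}\!k_n(t,s)l_n(t,s)u_0(s)^p\,\mu(\mathrm{d}s)\leq \|k_n(t,\cdot)\|_{p_n,t}\,\|l_n(t,\cdot)u_0^p\|_{q_n,t}
\end{equation*}
for each $n\geq N$; here the $[0,\infty]$-valued extension of H\"{o}lder's inequality is used so no integrability hypothesis is needed. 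Combining the two displays and taking $\limsup_{n\uparrow\infty}$ on both sides produces \eqref{eq:sufficient criterion}, since removing the finitely many indices $n<N$ cannot change the limit superior.

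Finally, for the closing assertion, if the right-hand side of \eqref{eq:sufficient criterion} is zero, then the non-negative sequence $\bigl(\int_{I(t)}\!\R_{k^p,\mu,n}(t,s)u_0(s)^p\,\mu(\mathrm{d}s)\bigr)_{n\in\N}$ has $\limsup$ equal to zero, and since its $\liminf$ is automatically non-negative, the sequence converges to zero, which is precisely condition \eqref{eq:resolvent inequality condition 1}. There is no genuine obstacle here; the only step that requires any care is confirming that the ``almost all $n$'' and ``$\mu$-a.e.'' qualifiers are preserved when one passes from the pointwise bound to the integral bound and then to the $\limsup$, and this is routine.
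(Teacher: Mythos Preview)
Your proof is correct and follows essentially the same approach as the paper: fix $t$, choose a threshold $n_t$ beyond which the pointwise bound holds, apply H\"{o}lder's inequality with exponents $p_n,q_n$ to the factorisation $k_n(t,\cdot)\cdot(l_n(t,\cdot)u_0^p)$, and then pass to the $\limsup$. The paper's proof is just a two-sentence version of exactly this argument; your write-up simply spells out the bookkeeping (the a.e.\ and ``almost all $n$'' qualifiers, and the $\liminf\geq 0$ observation for the final clause) that the paper leaves implicit.
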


\begin{Remark}\label{re:sufficient criterion}
For a positive kernel $l$ on $I$ we may take $k_{n} = \R_{k^{p},\mu,n}l^{-p}$ and $l_{n} = l^{p}$ for all $n\in\N$. Then~\eqref{eq:resolvent inequality condition 1} holds under each of the following two conditions:
\begin{enumerate}[(1)]
\item $\esssup_{s\in I(t)} l(t,s)u_{0}(s) < \infty$ and $\lim_{n\uparrow\infty}\int_{I(t)}\!\R_{k^{p},\mu,n}(t,s)l(t,s)^{-p}\,\mu(\mathrm{d}s) = 0$, and the latter is redundant if $l = 1$ and the series $\mathrm{I}_{k,\mu,p}(t)$ in~\eqref{eq:series function} is finite.

\item $\int_{I(t)}\!l(t,s)^{p}u_{0}(s)^{p}\,\mu(\mathrm{d}s) < \infty$ and $\lim_{n\uparrow\infty}\esssup_{s\in I(t)}\R_{k^{p},\mu,n}(t,s)l(t,s)^{-p} = 0$.
\end{enumerate}
\end{Remark}

\subsection{Sums of kernels and transformed fractional kernels}\label{se:2.2}

We describe the resolvent sequences of sums of kernels by a recursive scheme and consider, in particular, sums of transformed fractional kernels. As special case, powers of fractional kernels are studied.

First, let $\mu$ be an arbitrary $\sigma$-finite measure on $\mathcal{I}$ and $k$ be a non-negative kernel on $I$ that we write in the form
\begin{equation}\label{eq:sums of kernels}
k = k_{1} + \cdots + k_{N}
\end{equation}
for some $N\in\N$ and non-negative kernels $k_{1},\dots,k_{N}$ on $I$. Then the resolvent sequence $(\R_{k,\mu,n})_{n\in\N}$ of $k$ can be represented as follows.

\begin{Lemma}\label{le:resolvents of sums of kernels}
For each $n\in\N$ let the family $(\R_{k,\mu,n,j})_{j\in\{1,\dots,N\}^{n}}$ of non-negative kernels on $I$ be recursively given by $\R_{k,\mu,1,j}(t,s) := k_{j}(t,s)$ for each $j\in\{1,\dots,N\}$ and
\begin{equation}\label{eq:resolvents of sums of kernels}
\R_{k,\mu,n+1,j}(t,s) := \int_{[s,t]}\!k_{j_{n+1}}(t,\tilde{s})\R_{k,\mu,n,(j_{1},\dots,j_{n})}(\tilde{s},s)\,\mu(\mathrm{d}\tilde{s})
\end{equation}
for all $n\in\N$ and $j\in\{1,\dots,N\}^{n+1}$. Then it holds that $\R_{k,\mu,n,(i,\dots,i)} = \R_{k_{i},\mu,n}$ and $\R_{k,\mu,n} = \sum_{j\in\{1,\dots,N\}^{n}}\R_{k,\mu,n,j}$ for any $n\in\N$ and $i\in\{1,\dots,N\}$.
\end{Lemma}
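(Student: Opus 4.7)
The plan is to prove both assertions simultaneously by induction on $n\in\N$, using only the recursive definitions in~\eqref{eq:resolvent sequence} and~\eqref{eq:resolvents of sums of kernels} together with linearity of the integral against the finite sum over the first-level index.

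For the base case $n=1$, the identity $\R_{k,\mu,1,(i)} = k_{i} = \R_{k_{i},\mu,1}$ is immediate from the definition, and summing over $j\in\{1,\dots,N\}$ gives $\sum_{j=1}^{N} k_{j} = k = \R_{k,\mu,1}$ by the decomposition~\eqref{eq:sums of kernels}. Assume now that both claims hold at level $n$. For the constant multi-index $(i,\dots,i)\in\{1,\dots,N\}^{n+1}$, I would apply~\eqref{eq:resolvents of sums of kernels} with $j_{n+1}=i$ and $(j_{1},\dots,j_{n})=(i,\dots,i)$ and then use the inductive hypothesis $\R_{k,\mu,n,(i,\dots,i)} = \R_{k_{i},\mu,n}$ to identify the right-hand side with the defining recursion of $\R_{k_{i},\mu,n+1}(t,s)$ in~\eqref{eq:resolvent sequence}.

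For the summation identity at level $n+1$, I would start from~\eqref{eq:resolvent sequence} and expand $k = \sum_{j_{n+1}=1}^{N} k_{j_{n+1}}$ inside the integrand, then substitute the inductive hypothesis for $\R_{k,\mu,n}(\tilde{s},s)$:
\begin{equation*}
\R_{k,\mu,n+1}(t,s) = \int_{[s,t]}\!\sum_{j_{n+1}=1}^{N} k_{j_{n+1}}(t,\tilde{s}) \sum_{(j_{1},\dots,j_{n})\in\{1,\dots,N\}^{n}}\!\R_{k,\mu,n,(j_{1},\dots,j_{n})}(\tilde{s},s)\,\mu(\mathrm{d}\tilde{s}).
\end{equation*}
Since both sums are finite and the kernels are $[0,\infty]$-valued, Tonelli's theorem (or simply linearity for finite sums) allows me to pull both sums outside the integral and regroup into a single sum indexed by $j\in\{1,\dots,N\}^{n+1}$. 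Each resulting summand is exactly $\R_{k,\mu,n+1,j}(t,s)$ by definition~\eqref{eq:resolvents of sums of kernels}, yielding the claim.

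I do not anticipate a genuine obstacle: the proof is essentially combinatorial bookkeeping, and the only subtlety is to keep the multi-index notation and the position of the newly appended coordinate $j_{n+1}$ consistent with the definition~\eqref{eq:resolvents of sums of kernels}, where $j_{n+1}$ labels the outer kernel $k_{j_{n+1}}(t,\tilde{s})$ and $(j_{1},\dots,j_{n})$ labels the inner iterated kernel. Measurability of the newly defined kernels is inherited from Fubini's theorem exactly as in~\eqref{eq:resolvent sequence}, so no additional measure-theoretic argument is needed.
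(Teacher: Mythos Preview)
Your proposal is correct and follows essentially the same route as the paper's proof: both argue by induction on $n$, handle the base case directly from the definitions, and in the induction step unfold~\eqref{eq:resolvents of sums of kernels} for the constant multi-index while expanding $k=\sum_{j_{n+1}} k_{j_{n+1}}$ inside the recursion~\eqref{eq:resolvent sequence} and pulling the finite sums through the integral to recover the sum over $\{1,\dots,N\}^{n+1}$.
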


Let us now restrict our attention to the case that $I$ is a non-degenerate interval in $\Re$ with infimum $t_{0}$, $\mathcal{I}$ is its Borel $\sigma$-field, $\leq$ is the ordinary order and $\mu$ is the Lebesgue measure on $I$. To simplify notation, we shall write
\begin{equation}\label{eq:specific notation}
\R_{k,n},\quad \R_{k,n,j},\quad \R_{k}\quad \text{and}\quad\mathrm{I}_{k,p},\quad\text{where $n\in\N$ and $j\in\{1,\dots,N\}^{n}$,}
\end{equation}
for $\R_{k,\mu,n}$, $\R_{k,\mu,n,j}$, $\R_{k,\mu}$ and $\mathrm{I}_{k,\mu,p}$, respectively. Furthermore, $\Gamma$ denotes the gamma and $\mathrm{B}$ the beta function and we recall the entire general \emph{Mittag-Leffler function} $\mathrm{E}_{\alpha,\beta}:\mathbb{C}\rightarrow\mathbb{C}$, $z\mapsto\sum_{n=0}^{\infty}\Gamma(\alpha n + \beta)^{-1}z^{n}$ to the parameters $\alpha > 0$ and $\beta > 0$ that is discussed in \cite{HauMatSax11}, for instance.

By the Cauchy-Hadamard Theorem, the radius of convergence of the power series that represents $\mathrm{E}_{\alpha,\beta}$ is infinite. In fact, because the digamma function $\psi:]0,\infty[\rightarrow\Re$, $x\mapsto \Gamma'(x)\Gamma(x)^{-1}$ satisfies $\lim_{x\uparrow\infty}\psi(x) = \infty$, L'H{\^o}pital's rule yields that
\begin{equation}\label{eq:limit related to the digamma function}
\lim_{n\uparrow\infty} \Gamma(\alpha n + \beta)^{-\frac{1}{n}} = \lim_{n\uparrow\infty}  e^{-\alpha\psi(\alpha n + \beta)} = 0,\quad\text{even if $\beta = 0$.}
\end{equation}
The Mittag-Leffler function is used in the derivation of $\R_{k}$ and $\mathrm{I}_{k,1}$ in Example~\ref{ex:sums of transformed fractional kernels} when $k_{1},\dots,k_{N}$ are transformed fractional kernels. For the representation~\eqref{eq:sums of transformed fractional kernels 1} of $\R_{k,n}$, where $n\in\N$, we introduce the family of functions below. Thereby, we set
\begin{equation}\label{eq:specific notation 2}
\alpha_{0} := \min_{j=1,\dots,N} \alpha_{j}\quad\text{and}\quad \alpha_{\infty} := \max_{j=1,\dots,N} \alpha_{j}\quad\text{for all $\alpha\in\Re_{+}^{N}$.}
\end{equation}

\begin{Proposition}\label{pr:family of functions}
Let $\alpha\in ]0,\infty[^{N}$, $\beta\in [0,\alpha_{0}[^{N}$ and for each $n\in\N$ let the family $(f_{n,j})_{j\in\{1,\dots,N\}^{n}}$ of $[0,\infty]$-valued measurable functions on $]0,\infty[\times ]0,\infty[$ be recursively given by $f_{1,j}(x,y) := x^{\alpha_{j}-1}y^{-\beta_{j}}$ for every $j\in\{1,\dots,N\}$ and
\begin{equation}\label{eq:family of functions 1}
f_{n+1,j}(x,y) :=  x^{\alpha_{j_{n+1}}}\int_{0}^{1}(1-\lambda)^{\alpha_{j_{n+1}}-1}(\lambda x + y)^{-\beta_{j_{n+1}}}f_{n,(j_{1},\dots,j_{n})}(\lambda x,y)\,\mathrm{d}\lambda
\end{equation}
for any $n\in\N$ and $j\in\{1,\dots,N\}^{n+1}$. Then the following three assertions hold:
\begin{enumerate}[(i)]
\item For every $n\in\N$ the function $f_{n,j}$, where $j\in\{1,\dots,N\}^{n}$, is $]0,\infty[$-valued, decreasing in the second variable and continuous and satisfies
\begin{equation}\label{eq:family of functions 2}
f_{n,j}(x,y) \leq c_{n,j}x^{\beta_{j_{1}} - 1 + \sum_{i=1}^{n} \alpha_{j_{i}} - \beta_{j_{i}}}y^{-\beta_{j_{1}}}
\end{equation}
for all $x,y > 0$, where $c_{n,j} := \prod_{i=1}^{n-1}\mathrm{B}(\alpha_{j_{1}} - \beta_{j_{2}} + \cdots + \alpha_{j_{i}} - \beta_{j_{i+1}},\alpha_{j_{i+1}})$. Thereby, equality holds in~\eqref{eq:family of functions 2} if $\beta_{j_{1}} = \cdots = \beta_{j_{n}} = 0$.

\item For each $n\in\N$ the positive real number $\hat{c}_{n} := \prod_{i=1}^{n-1}\frac{\Gamma((\alpha_{0} - \beta_{\infty})i)}{\Gamma((\alpha_{0} - \beta_{\infty})i + \beta_{\infty})}$ satisfies
\begin{equation}\label{eq:family of functions 3}
c_{n,j}\leq \frac{\hat{c}_{n}\Gamma(\alpha_{0})^{n}}{\Gamma((\alpha_{0} - \beta_{\infty})n + \beta_{\infty})}\quad\text{for any $j\in\{1,\dots,N\}^{n}$,}
\end{equation}
and equality holds if $N = 1$. Further, $\max_{n\in\N}\hat{c}_{n} = \hat{c}$ for the global minimum point $x_{\Gamma}$ of $\Gamma$, $n_{\Gamma} := \min\{i\in\N\,|\,(\alpha_{0}-\beta_{\infty})i \geq x_{\Gamma}\}$ and $\hat{c} := \max_{i=1,\dots,n_{\Gamma}}\hat{c}_{i}$.

\item The series function $f := \sum_{n=1}^{\infty}\sum_{j\in\{1,\dots,N\}^{n}} f_{n,j}$ takes all its values in $]0,\infty[$, is decreasing in the second variable and continuous and satisfies
\begin{equation}\label{eq:family of functions 4}
f(x,y) \leq N\Gamma(\alpha_{0})\hat{c}\frac{ \varphi_{\alpha_{0},\alpha_{\infty}}(x)}{x\varphi_{\beta_{\infty},\beta_{0}}(y)}\mathrm{E}_{\alpha_{0} - \beta_{\infty},\alpha_{0}}\big(N\Gamma(\alpha_{0})\varphi_{\alpha_{0} - \beta_{\infty},\alpha_{\infty} - \beta_{0}}(x)\big)
\end{equation}
for all $x,y > 0$ with $\varphi_{\gamma,\delta}:]0,\infty[\rightarrow]0,\infty[$, $x\mapsto x^{\gamma}\mathbbm{1}_{]0,1]}(x) + x^{\delta}\mathbbm{1}_{]1,\infty[}(x)$ for $\gamma,\delta > 0$. Finally, we have equality in~\eqref{eq:family of functions 4} if $N = 1$ and $\beta = 0$.
\end{enumerate}
\end{Proposition}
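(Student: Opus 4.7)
The plan is to prove the three assertions in sequence, each building on the previous. The main technical obstacle will be the exponent bookkeeping of part (iii), where the piecewise functions $\varphi_{\gamma,\delta}$ absorb the variation across different multi-indices $j$; a subtle cancellation there uses the fact that $\beta_{j_{1}}$ occurs simultaneously in both the $x$- and $y$-exponents of $f_{n,j}$.

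For assertion (i) I proceed by induction on $n$. The base case is direct: $f_{1,j}(x,y) = x^{\alpha_{j}-1} y^{-\beta_{j}}$ is $]0,\infty[$-valued, continuous, decreasing in $y$ because $\beta_{j}\ge 0$, and \eqref{eq:family of functions 2} holds with equality since $c_{1,j}=1$ (empty product) and the $x$-exponent reduces to $\alpha_{j_{1}}-1$. For the inductive step, I substitute the inductive bound into \eqref{eq:family of functions 1}, apply the pointwise estimate $(\lambda x+y)^{-\beta_{j_{n+1}}}\le (\lambda x)^{-\beta_{j_{n+1}}}$ (an equality when $\beta_{j_{n+1}}=0$), and evaluate the remaining integral via $\int_{0}^{1}\lambda^{a-1}(1-\lambda)^{b-1}\,\mathrm{d}\lambda = \mathrm{B}(a,b)$. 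The required condition $a>0$ follows from $\beta_{\infty}<\alpha_{0}$, a direct exponent check produces the correct recursion for $c_{n+1,j}$, and equality persists whenever all $\beta_{j_{i}}$ vanish. Positivity, continuity, and monotonicity in $y$ pass to $f_{n+1,j}$ via dominated convergence applied to the integrand.

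For assertion (ii) write $c_{n,j}=\prod_{i=1}^{n-1}\mathrm{B}(S_{i},\alpha_{j_{i+1}})$ with $S_{i}:=\alpha_{j_{1}}-\beta_{j_{2}}+\cdots+\alpha_{j_{i}}-\beta_{j_{i+1}}\ge i(\alpha_{0}-\beta_{\infty})$. Since $\partial_{a}\log\mathrm{B}(a,b)=\psi(a)-\psi(a+b)<0$ by strict monotonicity of the digamma function, $\mathrm{B}$ is strictly decreasing in each argument, and so $\mathrm{B}(S_{i},\alpha_{j_{i+1}})\le \mathrm{B}(i(\alpha_{0}-\beta_{\infty}),\alpha_{0})$. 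The telescoping identity $i(\alpha_{0}-\beta_{\infty})+\alpha_{0}=(i+1)(\alpha_{0}-\beta_{\infty})+\beta_{\infty}$ then collapses the product into the right-hand side of \eqref{eq:family of functions 3}, with equality when $N=1$ since the parameters coincide throughout. For $\max_{n}\hat{c}_{n}=\hat{c}$, observe that $\hat{c}_{n+1}/\hat{c}_{n}=\Gamma(n(\alpha_{0}-\beta_{\infty}))/\Gamma(n(\alpha_{0}-\beta_{\infty})+\beta_{\infty})\le 1$ once $n(\alpha_{0}-\beta_{\infty})\ge x_{\Gamma}$, because $\Gamma$ is non-decreasing on $[x_{\Gamma},\infty[$; hence $(\hat{c}_{n})_{n\ge n_{\Gamma}}$ is non-increasing and the maximum is attained at some index $\le n_{\Gamma}$.

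For assertion (iii) I combine (i) and (ii) to bound each $f_{n,j}(x,y)$ by $c_{n}^{\ast}\,x^{\beta_{j_{1}}-1+\sum_{i}(\alpha_{j_{i}}-\beta_{j_{i}})}y^{-\beta_{j_{1}}}$ with $c_{n}^{\ast}:=\hat{c}_{n}\Gamma(\alpha_{0})^{n}/\Gamma((\alpha_{0}-\beta_{\infty})n+\beta_{\infty})$. The crucial algebraic step is that the $x$-exponent lies in the interval $[n(\alpha_{0}-\beta_{\infty})+\beta_{\infty}-1,\,n(\alpha_{\infty}-\beta_{0})+\beta_{0}-1]$: the lower bound arises from the identity $(\beta_{j_{1}}-\beta_{\infty})+(\alpha_{j_{1}}-\beta_{j_{1}})-(\alpha_{0}-\beta_{\infty})=\alpha_{j_{1}}-\alpha_{0}\ge 0$ together with the non-negativity of $(\alpha_{j_{i}}-\beta_{j_{i}})-(\alpha_{0}-\beta_{\infty})$ for $i\ge 2$, and the upper bound is symmetric. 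Combined with $\beta_{0}\le \beta_{j_{1}}\le \beta_{\infty}$ for the $y$-exponent, a routine case analysis on whether $x$ and $y$ lie in $]0,1]$ or $]1,\infty[$ yields $x^{\beta_{j_{1}}-1+\sum_{i}(\alpha_{j_{i}}-\beta_{j_{i}})}y^{-\beta_{j_{1}}}\le \varphi_{\alpha_{0},\alpha_{\infty}}(x)\,\varphi_{\alpha_{0}-\beta_{\infty},\alpha_{\infty}-\beta_{0}}(x)^{n-1}/(x\,\varphi_{\beta_{\infty},\beta_{0}}(y))$. Summing the $N^{n}$ resulting upper bounds over $j$, applying $\hat{c}_{n}\le \hat{c}$, and performing the index shift $n=m+1$ identifies the remaining series as $z\,\mathrm{E}_{\alpha_{0}-\beta_{\infty},\alpha_{0}}(z)$ with $z=N\Gamma(\alpha_{0})\varphi_{\alpha_{0}-\beta_{\infty},\alpha_{\infty}-\beta_{0}}(x)$, producing \eqref{eq:family of functions 4}. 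Positivity, continuity, and monotonicity in $y$ of $f$ then transfer from those of each $f_{n,j}$ by the uniform convergence on compacta ensured by this Mittag-Leffler bound, and equality when $N=1$ and $\beta=0$ follows because (i) and (ii) become equalities while each $\varphi_{\gamma,\gamma}$ collapses to a single monomial.
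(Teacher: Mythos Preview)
Your proof is correct and follows essentially the same route as the paper: induction with the estimate $(\lambda x+y)^{-\beta_{j_{n+1}}}\le(\lambda x)^{-\beta_{j_{n+1}}}$ and a Beta-integral evaluation for~(i), monotonicity of $\mathrm{B}$ together with the telescoping identity $i(\alpha_0-\beta_\infty)+\alpha_0=(i+1)(\alpha_0-\beta_\infty)+\beta_\infty$ for~(ii), and the piecewise monomial bound plus termwise summation for~(iii). One small bookkeeping point in~(iii): after factoring out $\hat{c}\,\varphi_{\alpha_0,\alpha_\infty}(x)/(x\,\varphi_{\beta_\infty,\beta_0}(y))$, the remaining series is $N\Gamma(\alpha_0)\,\mathrm{E}_{\alpha_0-\beta_\infty,\alpha_0}(z)$ rather than $z\,\mathrm{E}_{\alpha_0-\beta_\infty,\alpha_0}(z)$ (the two differ by a factor $\varphi_{\alpha_0-\beta_\infty,\alpha_\infty-\beta_0}(x)$); either way one arrives at~\eqref{eq:family of functions 4}, since $\varphi_{\beta_\infty,\beta_0}(x)\,\varphi_{\alpha_0-\beta_\infty,\alpha_\infty-\beta_0}(x)=\varphi_{\alpha_0,\alpha_\infty}(x)$.
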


\begin{Example}\label{ex:family of functions}
Bearing in mind that $\hat{c}_{n} = \hat{c} = 1$ for all $n\in\N$ whenever $\beta = 0$, let us consider the following two special cases in the setting of Proposition~\ref{pr:family of functions}:
\begin{enumerate}[(1)]
\item For $N = 1$ we set $f_{n} := f_{n,j}$ and $c_{n} := c_{n,j}$, where $j\in\{1\}^{n}$. Then the derived estimates~\eqref{eq:family of functions 2} and~\eqref{eq:family of functions 4} reduce to
\begin{equation*}
f_{n}(x,y) \leq c_{n}x^{(\alpha - \beta) n  + \beta - 1}y^{-\beta}\quad\text{and}\quad 
f(x,y) \leq \Gamma(\alpha)\hat{c}x^{\alpha - 1}y^{-\beta}\mathrm{E}_{\alpha - \beta,\alpha}(\Gamma(\alpha)x^{\alpha - \beta})
\end{equation*}
for any $n\in\N$ and $x,y > 0$, where $c_{n} = \frac{\hat{c}_{n}\Gamma(\alpha)^{n}}{\Gamma((\alpha - \beta)n + \beta)}$. Moreover, the inequalities turn into identities for $\beta = 0$.

\item For $\beta = 0$ equality in~\eqref{eq:family of functions 2} holds. Namely, $f_{n,j}(x,\cdot) = c_{n,j}x^{\alpha_{j_{1}} + \cdots + \alpha_{j_{n}} - 1}$ for all $n\in\N$, $j\in\{1,\dots,N\}^{n}$ and $x > 0$ with $c_{n,j}=\frac{\Gamma(\alpha_{j_{1}})\cdots\Gamma(\alpha_{j_{n}})}{\Gamma(\alpha_{j_{1}} + \cdots + \alpha_{j_{n}})}$. This entails that
\begin{equation*}
\sum_{j\in \{1,\dots,N\}^{n}}f_{n,j}(x,\cdot) = \sum_{\substack{i\in\{0,\dots,n\}^{N}:\\
i_{1} + \cdots + i_{N} = n}}\frac{n!}{i_{1}!\cdots i_{N}!}\Gamma(\alpha_{1})^{i_{1}}\cdots\Gamma(\alpha_{N})^{i_{N}}\frac{x^{\langle i,\alpha\rangle - 1}}{\Gamma(\langle i,\alpha\rangle)}
\end{equation*}
for any $n\in\N$ and $x > 0$, where $\langle\cdot,\cdot\rangle$ is the inner product on $\mathbb{R}^{N}$. This formula is based on the fact that
\begin{equation*}
\frac{n!}{i_{1}!\cdots i_{N}!}
\end{equation*}
is the amount of all $j\in\{1,\dots,N\}^{n}$ such that $|h\in\{1,\dots,n\}\,|\,j_{h} = m\}| = i_{m}$ for any $m\in\{1,\dots,N\}$, where $i\in\{0,\dots,n\}^{N}$ satisfies $i_{1} + \cdots + i_{N} = n$  
\end{enumerate}
\end{Example}

Based on these preliminaries and the notations~\eqref{eq:specific notation} and~\eqref{eq:specific notation 2}, we consider the subsequent type of kernels $k_{1},\dots,k_{N}$ in~\eqref{eq:sums of kernels}.

\begin{Example}[Sums of transformed fractional kernels]\label{ex:sums of transformed fractional kernels}
Let $\alpha\in ]0,\infty[^{N}$, $\beta\in [0,\alpha_{0}[^{N}$ and $\varphi:I\rightarrow\Re$ be strictly increasing and locally absolutely continuous such that $\varphi(t_{0}) > - \infty$ and
\begin{equation*}
k_{j}(t,s) = \dot{\varphi}(s)\big(\varphi(t) - \varphi(s)\big)^{\alpha_{j} - 1}\big(\varphi(s) - \varphi(t_{0})\big)^{-\beta_{j}}
\end{equation*}
for any $j\in\{1,\dots,N\}$ and $s,t\in I$ with $t_{0} < s < t$, where $\dot{\varphi}$ is a positive weak derivative of $\varphi$. For instance, if $I=\Re_{+}$, then for any $\gamma > 0$ we may take the function
\begin{equation*}
\varphi(t) = t^{\gamma}\quad\text{for all $t\geq 0$.}
\end{equation*}
In the general case, for each $n\in\N$ the family $(f_{n,j})_{j\in\{1,\dots,N\}^{n}}$ of $]0,\infty[$-valued continuous functions on $]0,\infty[\times ]0,\infty[$ in Proposition~\ref{pr:family of functions} satisfies \begin{equation*}
\R_{k,n,i}(t,s) = \dot{\varphi}(s)f_{n,i}\big(\varphi(t) - \varphi(s),\varphi(s) - \varphi(t_{0})\big)
\end{equation*}
and
\begin{equation}\label{eq:sums of transformed fractional kernels 1}
\R_{k,n}(t,s) = \dot{\varphi}(s)\sum_{j\in \{1,\dots,N\}^{n}}f_{n,j}\big(\varphi(t)-\varphi(s),\varphi(s) - \varphi(t_{0})\big)
\end{equation}
for all $i\in\{1,\dots,N\}^{n}$ and $s,t\in I$ with $t_{0} < s < t$, as an induction proof and Lemma~\ref{le:resolvents of sums of kernels} show. So, for the $]0,\infty[$-valued continuous function $f := \sum_{n=1}^{\infty}\sum_{j\in\{1,\dots,N\}^{n}} f_{n,j}$ we have
\begin{equation*}
\R_{k}(t,s) = \dot{\varphi}(s)f\big(\varphi(t) - \varphi(s),\varphi(s) - \varphi(t_{0})\big)
\end{equation*}
for any $s,t\in I$ with $t_{0} < s < t$, by the definition of the resolvent. Moreover, for every measurable function $u_{0}:I\rightarrow [0,\infty]$ condition~\eqref{eq:resolvent inequality condition 1} is satisfied when $p = 1$ if
\begin{equation*}
\int_{I(t)}\!\dot{\varphi}(s)(\varphi(s) - \varphi(t_{0}))^{-\beta_{j}}u_{0}(s)\,\mathrm{d}s < \infty
\end{equation*}
for all $j\in\{1,\dots,N\}$ and $t\in I$, according to Lemma~\ref{le:sufficient criterion}. In fact, the two estimates~\eqref{eq:family of functions 2} and~\eqref{eq:family of functions 3} in Proposition~\ref{pr:family of functions} entail that
\begin{equation}\label{eq:sums of transformed fractional kernels 2}
\R_{k,n}(t,s) \leq \frac{\hat{c}\big(N\Gamma(\alpha_{0})\big)^{n}}{\Gamma((\alpha_{0} - \beta_{\infty})n + \beta_{\infty})}k_{n}(t,s)l(s)
\end{equation}
for any $n\in\N$ and $s,t\in I$ with $t_{0} < s < t$, where $l := \max_{j\in\{1,\dots,N\}}\dot{\varphi}(\varphi - \varphi(t_{0}))^{-\beta_{j}}$ is measurable and the non-negative kernel $k_{n}$ on $I$ is defined via
\begin{equation*}
k_{n}(t,s) := \max_{j\in\{1,\dots,N\}^{n}}(\varphi(t) - \varphi(s))^{\beta_{j_{1}} - 1 + \sum_{i=1}^{n}\alpha_{j_{i}}- \beta_{j_{i}}},
\end{equation*}
and equality holds in~\eqref{eq:sums of transformed fractional kernels 2} if $N = 1$ and $\beta = 0$. Thus, all requirements of Lemma~\ref{le:sufficient criterion} are met, because~\eqref{eq:limit related to the digamma function} is valid and we have
\begin{equation*}
\sup_{s\in I(t)}k_{n}(t,s) \leq \max\{1,\varphi(t) - \varphi(t_{0})\}^{(\alpha_{\infty} - \beta_{0})n + \beta_{0} - 1}
\end{equation*}
for each $t\in I$ for almost all $n\in\N$. Finally, the series $\mathrm{I}_{k,1}(t)$, which we introduced in~\eqref{eq:specific notation}, reduces to $\int_{I(t)}\!\R_{k}(t,s)\,\mathrm{d}s$ and equals the expression
\begin{equation*}
(\varphi(t) - \varphi(t_{0}))\int_{0}^{1}\!f\big((1-\lambda)(\varphi(t) - \varphi(t_{0})),\lambda(\varphi(t) - \varphi(t_{0}))\big)\,\mathrm{d}\lambda
\end{equation*}
for any $t\in I$ with $t > t_{0}$, by a substitution and monotone convergence. Thus, under the condition that $\beta_{\infty} < 1$, we see that $\mathrm{I}_{k,1}(t) < \infty$ and the Mittag-Leffler function satisfies
\begin{equation*}
\mathrm{I}_{k,1}(t) \leq \hat{c}\Gamma(1 - \beta_{\infty})\big(\mathrm{E}_{\alpha_{0} - \beta_{\infty},1}\big(N\Gamma(\alpha_{0})\varphi_{\alpha_{0} - \beta_{\infty},\alpha_{\infty} - \beta_{0}}(\varphi(t) - \varphi(t_{0}))\big) - 1 \big),
\end{equation*}
and equality holds if $N = 1$ and $\beta = 0$. By the facts~\eqref{eq:basic fact} and~\eqref{eq:basic fact 2} and Lemma~\ref{le:resolvents of sums of kernels}, this shows that Corollary~\ref{co:resolvent inequality} yields the results in \cite[Theorems~1.4 and~1.5]{Lin13}, \cite[Theorem~3.2]{ZhanWei16}, \cite[Theorems~8 and~9]{Alm17}, \cite[Theorem~3]{VanCap19} and \cite[Theorem~1]{LiuXuZhou24} under weaker conditions.
\end{Example}

Let us turn to powers of fractional kernels. That is, we directly analyse $k^{p}$ for $p\geq 1$ when $k$ is a fractional kernel. To this end, we introduce the entire function $\mathrm{E}_{\alpha,\beta,p}:\mathbb{C}\rightarrow\mathbb{C}$,
\begin{equation}\label{eq:extension of the Mittag-Leffler function}
\mathrm{E}_{\alpha,\beta,p}(z) := \sum_{n=0}^{\infty} \frac{z^{n}}{\Gamma(\alpha n + \beta)^{\frac{1}{p}}}
\end{equation}
for $\alpha > 0$ and $\beta\geq 0$ that agrees with the Mittag-Leffler function $\mathrm{E}_{\alpha,\beta}$ if $\beta > 0$ and $p = 1$. The fact that the power series representing $\mathrm{E}_{\alpha,\beta,p}$ has an infinite radius of convergence follows, as before, from the Cauchy-Hadamard Theorem and~\eqref{eq:limit related to the digamma function}.

\begin{Example}[Fractional kernels]\label{ex:fractional kernel}
Let $I$ be bounded from below. For $\alpha > 0$ and $\beta \geq 0$ with $\beta + 1 - \frac{1}{p} < \alpha$ we set $\alpha_{p} := (\alpha - 1)p + 1$ and suppose that
\begin{equation*}
k(t,s) = (t-s)^{\alpha - 1}(s - t_{0})^{-\beta}
\end{equation*}
for any $s,t\in I$ with $t_{0} < s < t$. Then Proposition~\ref{pr:family of functions} provides a sequence $(f_{p,n})_{n\in\N}$ of $]0,\infty[$-valued continuous functions on $]0,\infty[\times ]0,\infty[$ that are decreasing in the second variable such that
\begin{equation}\label{eq:fractional kernel 1}
\R_{k^{p},n}(t,s) = f_{p,n}(t-s,s-t_{0})
\end{equation}
for all $n\in\N$ and $s,t\in I$ with $t_{0} < s < t$. Namely, based on the inequality $\beta p < \alpha_{p}$, we have $f_{p,1}(x,y) = x^{\alpha_{p} - 1}y^{-\beta p}$ for all $x, y > 0$ and the recursion
\begin{equation*}
f_{p,n+1}(x,y) = x^{\alpha_{p}}\int_{0}^{1}\!(1 - \lambda)^{\alpha_{p} - 1}(\lambda x + y)^{-\beta p}f_{p,n}(\lambda x,y)\,\mathrm{d}\lambda
\end{equation*}
holds for any $n\in\N$ and $x, y > 0$. Further, we set $\hat{c}_{p,n} := \prod_{i=1}^{n-1}\frac{\Gamma((\alpha_{p} - \beta p)i)}{\Gamma((\alpha_{p} - \beta p)i + \beta p)}$ and readily note that $\hat{c}_{p,n} = 1$ if $\beta = 0$. Then Proposition~\ref{pr:family of functions} also implies the estimate
\begin{equation}\label{eq:fractional kernel 2}
f_{p,n}(x,y) \leq \frac{\hat{c}_{p,n}\Gamma(\alpha_{p})^{n}}{\Gamma((\alpha_{p} - \beta p)n + \beta p)}x^{(\alpha_{p} - \beta p)n + \beta p - 1}y^{-\beta p}
\end{equation}
for all $x,y > 0$, which turns into an equality if $\beta = 0$, and we have $\max_{n\in\N}\hat{c}_{p,n} = \hat{c}_{p}$ for $n_{p,\Gamma} := \min\{i\in\N\,|\, (\alpha_{p} - \beta p)i \geq x_{\Gamma}\}$ and $\hat{c}_{p} := \max_{i=1,\dots,n_{p,\Gamma}} \hat{c}_{p,i}$.

Since the kernel $k^{p}$ falls into the setting of Example~\ref{ex:sums of transformed fractional kernels}, Lemma~\ref{le:sufficient criterion} shows that for each measurable function $u_{0}:I\rightarrow [0,\infty]$ the condition~\eqref{eq:resolvent inequality condition 1} is valid if
\begin{equation}\label{eq:fractional kernel 3}
\int_{t_{0}}^{t}\!(t - t_{0})^{-\beta p}u_{0}(s)^{p}\,\mathrm{d}s < \infty\quad\text{for all $t\in I$.}
\end{equation}
Clearly, if $t_{0}\in I$, then~\eqref{eq:fractional kernel 3} is equivalent to the local $p$-fold integrability of the function $I\rightarrow [0,\infty]$, $t\mapsto (t-t_{0})^{-\beta}u_{0}(t)$. The sufficiency of~\eqref{eq:fractional kernel 3} is justified by the estimate~\eqref{eq:fractional kernel 2}, which implies that
\begin{equation*}
\R_{k^{p},n}(t,s) \leq \frac{\hat{c}_{p}\Gamma(\alpha_{p})^{n}}{\Gamma((\alpha_{p} - \beta p)n + \beta p)}(t-s)^{(\alpha_{p} - \beta p)n + \beta p -1}(s - t_{0})^{-\beta p}
\end{equation*}
for all $n\in\N$ and $s,t\in I$ with $t_{0} < s < t$. Finally, a substitution shows that the series $\mathrm{I}_{k,p}(t)$, introduced in~\eqref{eq:specific notation}, is of the form
\begin{equation*}
\mathrm{I}_{k,p}(t) = \sum_{n=1}^{\infty}\bigg(\int_{0}^{1}\!f_{p,n}\big((1-\lambda)(t - t_{0}),\lambda(t - t_{0})\big)\,\mathrm{d}\lambda\bigg)^{\frac{1}{p}}(t - t_{0})^{\frac{1}{p}}.
\end{equation*}
Thus, the estimate~\eqref{eq:fractional kernel 2} entails that $\mathrm{I}_{k,p}(t) < \infty$ $\Leftrightarrow$ $\beta p < 1$. In this case, the extension $\mathrm{E}_{\alpha_{p} - \beta p,1,p}$ of the Mittag-Leffler function satisfies
\begin{equation*}
\mathrm{I}_{k,p}(t) \leq \hat{c}_{p}^{\frac{1}{p}}\Gamma(1 - \beta p)^{\frac{1}{p}}\big(\mathrm{E}_{\alpha_{p} - \beta p,1,p}\big(\Gamma(\alpha_{p})^{\frac{1}{p}}(t-t_{0})^{\frac{\alpha_{p}}{p} - \beta}\big) - 1\big),
\end{equation*}
which is an identity if $\beta = 0$. By Example~\ref{ex:family of functions}, this analysis applies for $p=1$ to the kernels appearing in the inequalities in \cite[pp.~188-189]{Hen81}, \cite[Theorem~1]{YeGaoDin07} and \cite[Theorem~3.2]{Web19}.
\end{Example}

\subsection{Kernels on Cartesian products and Gronwall inequalities}\label{se:2.3}

By estimating the resolvent sequences of kernels on preordered Cartesian products that can be bounded by products of kernels on one preordered set, we deduce Gronwall inequalities for functions of several variables.

To this end, let $\mu$ be a $\sigma$-finite measure on $\mathcal{I}$, $k$ be a non-negative kernel on $I$ and $p\geq 1$. Then the hypothesis that $I$ is merely preordered allows for a product representation.

\begin{Proposition}\label{pr:kernels on Cartesian products}
For $m\in\N$ and any $i\in\{1,\dots,m\}$ let $I_{i}$ be a non-empty set endowed with a $\sigma$-field $\mathcal{I}_{i}$ and a preorder $\leq_{i}$ such that
\begin{equation*}
\Delta_{i}:= \{(t_{i},s_{i})\in I_{i}\times I_{i}\,|\, s_{i}\leq_{i} t_{i}\}\quad\text{belongs to}\quad \mathcal{I}_{i}\otimes\mathcal{I}_{i}.
\end{equation*}
In the setting $I = I_{1}\times\cdots\times I_{m}$, $\mathcal{I} = \mathcal{I}_{1}\otimes\cdots\otimes\mathcal{I}_{m}$, and $s\leq t$ $\Leftrightarrow$ $s_{1}\leq_{1} t_{1},\dots,s_{m}\leq_{m} t_{m}$ for all $s,t\in I$, the following two assertions hold:
\begin{enumerate}[(i)]
\item The triangular set of all $(t,s)\in I\times I$ with $s\leq t$ lies in $\mathcal{I}\otimes\mathcal{I}$ as preimage of $\Delta_{1}\times\cdots\times\Delta_{m}$ under a product measurable map on $I\times I$. Further,
\begin{equation*}
I(t) = I_{1}(t_{1})\times\cdots\times I_{m}(t_{m})\quad\text{and}\quad [r,t] = [r_{1},t_{1}]\times\cdots\times [r_{m},t_{m}]
\end{equation*}
for all $r,t\in I$ with $r\leq t$.

\item If $\mu_{i}$ is a $\sigma$-finite measure on $\mathcal{I}_{i}$ and $k_{i}$ is a non-negative kernel on $I_{i}$ for each $i\in\{1,\dots,m\}$ such that
\begin{equation}\label{eq:kernels on Cartesian products 1}
\mu \leq \mu_{1}\otimes\cdots\otimes\mu_{m}\quad\text{and}\quad k(t,s) \leq k_{1}(t_{1},s_{1})\cdots k_{m}(t_{m},s_{m})
\end{equation}
for any $s,t\in I$ with $s\leq t$, then
\begin{equation}\label{eq:kernels on Cartesian products 2}
\R_{k,\mu,n}(t,s) \leq \R_{k_{1},\mu_{1},n}(t_{1},s_{1})\cdots \R_{k_{m},\mu_{m},n}(t_{m},s_{m})
\end{equation}
for all $n\in\N$ and $s,t\in I$ with $s\leq t$. Moreover,
\begin{align}\label{eq:kernels on Cartesian products 3}
\mathrm{I}_{k,\mu,p}(t) &\leq \sum_{n=1}^{\infty}\prod_{i=1}^{m}\bigg(\int_{I_{i}(t_{i})}\!\R_{k_{i}^{p},\mu_{i},n}(t_{i},s_{i})\,\mu_{i}(\mathrm{d}s_{i})\bigg)^{\frac{1}{p}}\leq \prod_{i=1}^{m} \mathrm{I}_{k_{i},\mu_{i},p}(t_{i})
\end{align}
for every $t\in I$, and if the two inequalities in~\eqref{eq:kernels on Cartesian products 1} are equalities, then~\eqref{eq:kernels on Cartesian products 2} and the first inequality in~\eqref{eq:kernels on Cartesian products 3} turn into identities.
\end{enumerate}
\end{Proposition}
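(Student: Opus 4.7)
The two assertions are essentially independent. For (i), the plan is to introduce the coordinate-shuffling map
$$\phi\colon I\times I\to (I_{1}\times I_{1})\times\cdots\times(I_{m}\times I_{m}),\quad (t,s)\mapsto ((t_{1},s_{1}),\dots,(t_{m},s_{m})),$$
whose components $(t,s)\mapsto(t_{i},s_{i})$ are each measurable with respect to $\mathcal{I}\otimes\mathcal{I}$ by the universal property of product $\sigma$-fields, so that $\phi$ itself is product measurable. The triangular set of $I\times I$ then coincides with $\phi^{-1}(\Delta_{1}\times\cdots\times\Delta_{m})$ and thus belongs to $\mathcal{I}\otimes\mathcal{I}$ by assumption on each $\Delta_{i}$. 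The factorizations $I(t)=I_{1}(t_{1})\times\cdots\times I_{m}(t_{m})$ and $[r,t]=[r_{1},t_{1}]\times\cdots\times[r_{m},t_{m}]$ then follow directly from the componentwise definition of $\leq$.

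For (ii), the main work is the iterated kernel estimate~\eqref{eq:kernels on Cartesian products 2}, which I would establish by induction on $n$. The base case $n=1$ is the second inequality in~\eqref{eq:kernels on Cartesian products 1}. For the induction step, I would expand $\R_{k,\mu,n+1}(t,s)$ via the recursion~\eqref{eq:resolvent sequence}, insert the inductive bound on $\R_{k,\mu,n}(\tilde{s},s)$ together with the kernel estimate on $k(t,\tilde{s})$, rewrite the domain $[s,t]$ in its product form from (i), dominate $\mu$ by $\mu_{1}\otimes\cdots\otimes\mu_{m}$ on the resulting non-negative integrand (monotonicity of the integral for non-negative integrands), and then invoke Tonelli's theorem on the product measure to factor the integral into $\prod_{i=1}^{m}\R_{k_{i},\mu_{i},n+1}(t_{i},s_{i})$. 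Both the measure-domination step and the kernel bounding become equalities whenever~\eqref{eq:kernels on Cartesian products 1} holds with equality, so the inductive identity is preserved in that case.

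To deduce~\eqref{eq:kernels on Cartesian products 3}, I would apply~\eqref{eq:kernels on Cartesian products 2} to $k^{p}$ with the factor kernels $k_{i}^{p}$, observing that the product bound survives taking $p$-th powers since all quantities are non-negative. Integrating over $I(t)$ using the same dominance of $\mu$ and Tonelli separates the integral into a product over $i=1,\dots,m$; taking $p$-th roots distributes across this product and summing over $n\in\N$ yields the first inequality in~\eqref{eq:kernels on Cartesian products 3}. The second inequality then reduces to the elementary sum--product estimate $\sum_{n\in\N}\prod_{i=1}^{m}a_{n,i}\leq\prod_{i=1}^{m}\sum_{n\in\N}a_{n,i}$ for non-negative reals, which I would justify by expanding the right-hand side as a sum over $\N^{m}$ and discarding all off-diagonal terms (or inductively from the two-factor case). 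The equality assertion follows because every step used in deriving~\eqref{eq:kernels on Cartesian products 2} and the first bound in~\eqref{eq:kernels on Cartesian products 3} becomes an identity once the two inequalities in~\eqref{eq:kernels on Cartesian products 1} are equalities. The main obstacle is nothing conceptual — only the bookkeeping required to keep track of the product structure through the nested integrals in the inductive step and to ensure that Tonelli's theorem is applied correctly to the dominating product measure.
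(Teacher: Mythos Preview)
Your proposal is correct and follows essentially the same route as the paper: the same coordinate-shuffling map for (i), the same induction on $n$ using the recursion, the measure domination, and Fubini/Tonelli for~\eqref{eq:kernels on Cartesian products 2}, and the same passage to $k^{p}$ plus Tonelli for~\eqref{eq:kernels on Cartesian products 3}. You are in fact slightly more explicit than the paper about the elementary sum--product bound underlying the second inequality in~\eqref{eq:kernels on Cartesian products 3}, which the paper leaves implicit.
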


Let us focus on two specific settings of Proposition~\ref{pr:kernels on Cartesian products} that are based on Definition~\ref{de:set of kernels} and Examples~\ref{ex:regular kernels on intervals} and~\ref{ex:void preorder}:
\begin{enumerate}[label=(C.\arabic*), ref=C.\arabic*, leftmargin=\widthof{(C.3)} + \labelsep]
\setcounter{enumi}{2}
\item\label{co:3} There are $m\in\N$, non-degenerate intervals $I_{1},\dots,I_{m}$ in $\Re$ and a measurable space $(I_{m+1},\mathcal{I}_{m+1})$ such that $I = I_{1}\times\cdots\times I_{m+1}$, $ \mathcal{I} = \mathcal{B}(I_{1})\otimes\cdots\otimes\mathcal{B}(I_{m})\otimes\mathcal{I}_{m+1}$ and
\begin{equation*}
s\leq t\quad\Leftrightarrow\quad s_{1}\leq t_{1},\dots,s_{m}\leq t_{m}\quad\text{for any $s,t\in I$.}
\end{equation*}
For each $i\in\{1,\dots,m\}$ there is a $\sigma$-finite Borel measure $\mu_{i}$ on $I_{i}$ satisfying $\mu_{i}(\{t_{i}\})$ $= 0$ for all $t_{i}\in I_{i}$ and a $\sigma$-finite measure $\mu_{m+1}$ on $(I_{m+1},\mathcal{I}_{m+1})$ such that
\begin{equation*}
\mu = \mu_{1}\otimes\cdots\otimes\mu_{m+1}.
\end{equation*}
Moreover, there are $k_{1}\in K_{\mu_{1}}^{p}(I_{1}),\dots,k_{m}\in K_{\mu_{m}}^{p}(I_{m})$ and a measurable $p$-fold integrable function $k_{m+1}:I_{m+1}\rightarrow [0,\infty]$ such that
\begin{equation*}
k(t,s) = k_{1}(t_{1},s_{1})\cdots k_{m}(t_{m},s_{m})k_{m+1}(s_{m+1})\quad\text{for all $s,t\in I$ with $s\leq t$.}
\end{equation*}

\item\label{co:4} The preorder $\leq$ on $I$ is void, that is, $s\leq t$ holds for any $s,t\in I$, and there is a measurable function $k_{1}:I\rightarrow [0,\infty]$ satisfying
\begin{equation*}
k(\cdot,s) = k_{1}(s)\quad\text{for all $s\in I$}\quad\text{and}\quad\int_{I}\!k_{1}(t)^{p}\,\mu(\mathrm{d}t) < 1.
\end{equation*}
\end{enumerate}

\begin{Remark}
The setting~\eqref{co:3} includes the possibility that $I_{m+1}$ is a singleton, $\mu_{m+1}$ is a probability measure and $k_{m+1}$ equals one. That is, there is $t_{m+1}\in I_{m+1}$ such that
\begin{equation*}
I_{m+1} = \{t_{m+1}\},\quad \mathcal{I}_{m+1}=\{\emptyset,\{t_{m+1}\}\},\quad \mu_{m+1}(\{t_{m+1}\}) = 1\quad\text{and}\quad k_{m+1}(t_{m+1}) = 1.
\end{equation*}
In such a case, we may identify $I$ with $I_{1}\times\cdots\times I_{m}$ via the bijective map $I\rightarrow I_{1}\times\cdots\times I_{m}$, $s\mapsto (s_{1},\dots,s_{m})$.
\end{Remark}

By convention, we set $m:= 0$ whenever~\eqref{co:4} holds, which allows us to use the variable $m$ in either scenario.

\begin{Proposition}\label{pr:estimates for Gronwall inequalities}
Suppose that~\eqref{co:3} or~\eqref{co:4} is satisfied. Then
\begin{equation}\label{eq:estimates for Gronwall inequalities 1}
\R_{k^{p},\mu,n}(t,s) \leq \frac{k(t,s)^{p}}{((n-1)!)^{m}}\bigg(\int_{[s,t]}\!k(t,\tilde{s})^{p}\,\mu(\mathrm{d}\tilde{s})\bigg)^{n-1}
\end{equation}
for any $n\in\N$ and $s,t\in I$ with $s\leq t$, and equality holds if $m = 0$ or both $m\geq 1$ and for each $i\in\{1,\dots,m\}$ there is a measurable function $\tilde{k}_{i}:I_{i}\rightarrow [0,\infty]$ such that
\begin{equation}\label{eq:specific case}
k_{i}(t_{i},s_{i}) = \tilde{k}_{i}(s_{i})\quad\text{and}\quad\int_{I_{i}(t_{i})}\!\tilde{k}_{i}(\tilde{s}_{i})^{p}\,\mu_{i}(\mathrm{d}\tilde{s}_{i}) < \infty
\end{equation}
for all $s_{i},t_{i}\in I_{i}$ with $s_{i}\leq t_{i}$. Moreover, for a measurable function $u_{0}:I\rightarrow [0,\infty]$ the condition~\eqref{eq:resolvent inequality condition 2} is valid if and only if
\begin{equation}\label{eq:estimates for Gronwall inequalities 2}
\int_{I(t)}\!k(t,s)^{p}u_{0}(s)^{p}\,\mu(\mathrm{d}s) < \infty
\end{equation}
for any $t\in I$. In particular, $k\in K_{\mu}^{p}(I)$, and~\eqref{eq:estimates for Gronwall inequalities 2} holds if $u_{0}$ is essentially bounded on $I(t)$.
\end{Proposition}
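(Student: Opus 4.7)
The plan is to reduce the proposition to the one-dimensional cases already handled in Examples~\ref{ex:regular kernels on intervals} and~\ref{ex:void preorder} by exploiting the product structure of $k^{p}$ together with Proposition~\ref{pr:kernels on Cartesian products}. Under~\eqref{co:3} the kernel $k^{p}$ factorises as a product of kernels on $I_{1},\dots,I_{m}$ and a factor $k_{m+1}^{p}$ on $I_{m+1}$ (the latter endowed with the void preorder), so Proposition~\ref{pr:kernels on Cartesian products} yields
\[
\R_{k^{p},\mu,n}(t,s) \leq \prod_{i=1}^{m}\R_{k_{i}^{p},\mu_{i},n}(t_{i},s_{i})\cdot \R_{k_{m+1}^{p},\mu_{m+1},n}(t_{m+1},s_{m+1}).
\]
For each $i\leq m$ I would apply the bound~\eqref{eq:regular kernels on intervals 1} from Example~\ref{ex:regular kernels on intervals}, and for the remaining factor the explicit formula from Example~\ref{ex:void preorder}; Fubini's theorem then recombines the product of one-variable integrals $\int_{[s_{i},t_{i}]}\!k_{i}(t_{i},\cdot)^{p}\,\mu_{i}$ and $\int_{I_{m+1}}\!k_{m+1}^{p}\,\mu_{m+1}$ into $\int_{[s,t]}\!k(t,\cdot)^{p}\,\mu$, delivering~\eqref{eq:estimates for Gronwall inequalities 1}. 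Under~\eqref{co:4} the convention $m=0$ reduces~\eqref{eq:estimates for Gronwall inequalities 1} to the identity supplied directly by Example~\ref{ex:void preorder}.

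For the equality claims, when $m=0$ one is exactly in the setting of Example~\ref{ex:void preorder} so equality is automatic, and when $m\geq 1$ together with~\eqref{eq:specific case} each $k_{i}^{p}$ depends solely on its second variable; Example~\ref{ex:regular kernels on intervals} then gives equality in every factor bound, while the product estimate in Proposition~\ref{pr:kernels on Cartesian products} becomes an identity because $k^{p}$ is a genuine product kernel, so both layers collapse to equations.

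Next I would establish the equivalence between~\eqref{eq:resolvent inequality condition 2} and~\eqref{eq:estimates for Gronwall inequalities 2}. One direction is immediate since the $n=1$ summand of~\eqref{eq:resolvent inequality condition 2} equals $(\int_{I(t)}\!k(t,s)^{p}u_{0}(s)^{p}\,\mu(\mathrm{d}s))^{1/p}$. Conversely, applying~\eqref{eq:estimates for Gronwall inequalities 1} with $[s,t]$ enlarged to the larger set $I(t)$ and writing $D_{t}:=\int_{I(t)}\!k(t,\tilde{s})^{p}\,\mu(\mathrm{d}\tilde{s})$, I obtain
\[
\sum_{n=1}^{\infty}\bigg(\int_{I(t)}\!\R_{k^{p},\mu,n}(t,s)u_{0}(s)^{p}\,\mu(\mathrm{d}s)\bigg)^{\frac{1}{p}} \leq \bigg(\int_{I(t)}\!k(t,s)^{p}u_{0}(s)^{p}\,\mu(\mathrm{d}s)\bigg)^{\frac{1}{p}}\sum_{n=0}^{\infty}\frac{D_{t}^{n/p}}{(n!)^{m/p}}.
\]
The tail series converges in both scenarios: under~\eqref{co:3}, $D_{t}$ is finite because each $k_{i}\in K_{\mu_{i}}^{p}(I_{i})$ and $k_{m+1}$ is $p$-fold integrable, while $m\geq 1$ supplies factorial decay; under~\eqref{co:4}, $D_{t}<1$ by hypothesis and $m=0$ turns the tail into a convergent geometric series. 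The remaining assertions, namely $k\in K_{\mu}^{p}(I)$ and the sufficiency of essential boundedness of $u_{0}$ on $I(t)$, follow from the $u_{0}\equiv 1$ specialisation, whereas the monotonicity condition~\eqref{eq:monotonicity condition} for $k$ is inherited factorwise from the $k_{i}$'s and is vacuous under~\eqref{co:4}.

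The principal obstacle I anticipate is the Fubini bookkeeping at the transition between the product of one-variable integrals and the multi-variable integral over $[s,t]=[s_{1},t_{1}]\times\cdots\times[s_{m},t_{m}]\times I_{m+1}$, together with the unified treatment of the two regimes: in~\eqref{co:3} the factorial growth $(n!)^{m/p}$ with $m\geq 1$ absorbs arbitrary powers of a finite $D_{t}$, whereas in~\eqref{co:4} the strict contraction $D_{t}<1$ must compensate for the missing factorial.
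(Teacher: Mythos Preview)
Your proposal is correct and follows essentially the same route as the paper: factorise $k^{p}$ via Proposition~\ref{pr:kernels on Cartesian products}, apply the one-variable bounds from Examples~\ref{ex:regular kernels on intervals} and~\ref{ex:void preorder} to each factor, and recombine with Fubini; the equivalence of~\eqref{eq:resolvent inequality condition 2} and~\eqref{eq:estimates for Gronwall inequalities 2} is handled identically, with the paper's constant $c_{m}(t)$ playing the role of your $D_{t}$ and the ratio test replacing your direct appeal to factorial decay versus geometric contraction.
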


For a measurable function $v_{0}:I\rightarrow [0,\infty]$ and a non-negative kernel $l$ on $I$, we define another measurable function $v:I\rightarrow [0,\infty]$ by 
\begin{equation*}
v(t) := v_{0}(t) + \bigg(\int_{I(t)}\!l(t,s)^{p}\,\mu(\mathrm{d}s)\bigg)^{\frac{1}{p}}.
\end{equation*}
Then, under~\eqref{co:3} or~\eqref{co:4}, Proposition~\ref{pr:estimates for Gronwall inequalities} yields the estimate
\begin{equation}\label{eq:estimates for Gronwall inequalities 3}
\int_{I(t)}\!\R_{k^{p},\mu,n}(t,s)v(s)^{p}\,\mu(\mathrm{d}s)\leq\int_{I(t)}\!\frac{k(t,s)^{p}}{((n-1)!)^{m}}\bigg(\int_{[s,t]}\!k(t,\tilde{s})^{p}\,\mu(\mathrm{d}\tilde{s})\bigg)^{n-1}v(s)^{p}\,\mu(\mathrm{d}s)
\end{equation}
for all $n\in\N$ and $t\in I$, which turns into an equation if $m = 0$ or the special setting in~\eqref{eq:specific case} occurs for $m\geq 1$. To bound the integral on the right-hand side, we impose the subsequent conditions on the kernel $l$ in each of the two scenarios:
\begin{enumerate}[label=(C.\arabic*), ref=C.\arabic*, leftmargin=\widthof{(C.5)} + \labelsep]
\setcounter{enumi}{4}
\item\label{co:5} In addition to~\eqref{co:3} there are $l_{1}\in K_{\mu_{1}}^{p}(I_{1}),\dots,l_{m}\in K_{\mu_{m}}^{p}(I_{m})$ and a measurable $p$-fold integrable function  $l_{m+1}:I_{m+1}\rightarrow [0,\infty]$ such that
\begin{equation*}
l(t,s) = l_{1}(t_{1},s_{1})\cdots l_{m}(t_{m},s_{m})l_{m+1}(s_{m+1})\quad\text{for any $s,t\in I$ with $s\leq t$.}
\end{equation*}

\item\label{co:6} The scenario~\eqref{co:4} is valid and there is a measurable $p$-fold integrable function $l_{1}:I\rightarrow [0,\infty]$ such that $l(\cdot,s) = l_{1}(s)$ for every $s\in I$.
\end{enumerate}

Now we can continue the estimation~\eqref{eq:estimates for Gronwall inequalities 3} as follows.

\begin{Lemma}\label{le:integral estimates}
Under~\eqref{co:5} or~\eqref{co:6}, we have
\begin{align*}
\bigg(\int_{I(t)}\!\frac{k(t,s)^{p}}{((n-1)!)^{m}}\bigg(\int_{[s,t]}&\!k(t,\tilde{s})^{p}\,\mu(\mathrm{d}\tilde{s})\bigg)^{n-1}v(s)^{p}\,\mu(\mathrm{d}s)\bigg)^{\frac{1}{p}}\\
&\leq \sup_{s\in I(t)} v_{0}(s)\frac{1}{(n!)^{\frac{m}{p}}}\bigg(\int_{I(t)}\!k(t,\tilde{s})^{p}\,\mu(\mathrm{d}\tilde{s})\bigg)^{\frac{n}{p}}\\
&\quad + \frac{1}{(n!)^{\frac{m}{p}}}\bigg(\int_{I(t)}\!\bigg(\int_{[s,t]}\!k(t,\tilde{s})^{p}\,\mu(\mathrm{d}\tilde{s})\bigg)^{n}l(t,s)^{p}\,\mu(\mathrm{d}s)\bigg)^{\frac{1}{p}}
\end{align*}
for any $n\in\N$ and $t\in I$, and equality holds in the following two cases:
\begin{enumerate}[(i)]
\item $v_{0}$ is constant and $l$ vanishes.

\item $v_{0} = 0$ and if $m\geq 1$, then for every $i\in\{1,\dots,m\}$ there is a measurable function $\tilde{l}_{i}:I_{i}\rightarrow [0,\infty]$ such that $l_{i}(t_{i},s_{i}) = \tilde{l}_{i}(s_{i})$ and $\int_{I_{i}(t_{i})}\!\tilde{l}_{i}(\tilde{s}_{i})^{p}\,\mu_{i}(\mathrm{d}\tilde{s}_{i}) < \infty$ for any $s_{i},t_{i}\in I_{i}$ with $s_{i}\leq t_{i}$.
\end{enumerate}
\end{Lemma}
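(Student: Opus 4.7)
The plan is to apply Minkowski's inequality in $L^{p}(w\mu)$ with weight $w(s):=k(t,s)^{p}((n-1)!)^{-m}\bigl(\int_{[s,t]}k(t,\tilde{s})^{p}\mu(\mathrm{d}\tilde{s})\bigr)^{n-1}$, after first upgrading the kernel in the definition of $v$. Concretely, for any $\tilde{s},s\in I$ with $\tilde{s}\leq s\leq t$, the monotonicity condition~\eqref{eq:monotonicity condition} enjoyed by each $l_{i}$ (from $l_{i}\in K_{\mu_{i}}^{p}(I_{i})$ under~\eqref{co:5}, and trivially under~\eqref{co:6}) yields $l(s,\tilde{s})\leq l(t,\tilde{s})$, so that $v(s)\leq v_{0}(s)+\bigl(\int_{I(s)}l(t,\tilde{s})^{p}\mu(\mathrm{d}\tilde{s})\bigr)^{1/p}$. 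Minkowski's inequality then splits the left-hand side into a $v_{0}$-piece and an $l$-piece that I would treat separately.

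For the $v_{0}$-piece I bound $v_{0}(s)\leq\sup_{s\in I(t)}v_{0}(s)$ and reduce matters to computing $\int_{I(t)}w(s)\,\mu(\mathrm{d}s)$. Under~\eqref{co:5} or~\eqref{co:6}, the product structure of $\mu$ and the factorisation of $k(t,s)^{p}$ allow Tonelli's theorem to split this integral into the outer factor $K_{m+1}^{n-1}\int_{I_{m+1}}k_{m+1}^{p}\,\mathrm{d}\mu_{m+1}=K_{m+1}^{n}$, with $K_{m+1}:=\int_{I_{m+1}}k_{m+1}^{p}\,\mathrm{d}\mu_{m+1}$, times $m$ one-dimensional integrals of the form $\int_{I_{i}(t_{i})}k_{i}(t_{i},s_{i})^{p}K_{i}(s_{i})^{n-1}\,\mu_{i}(\mathrm{d}s_{i})$, where $K_{i}(s_{i}):=\int_{[s_{i},t_{i}]}k_{i}(t_{i},\tilde{s}_{i})^{p}\,\mu_{i}(\mathrm{d}\tilde{s}_{i})$. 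Since $\mu_{i}$ is non-atomic, each $K_{i}$ is continuous and of bounded variation, so the fundamental theorem of calculus for Riemann-Stieltjes integrals (as used in Example~\ref{ex:regular kernels on intervals}) gives $\int_{I_{i}(t_{i})}k_{i}(t_{i},s_{i})^{p}K_{i}(s_{i})^{n-1}\,\mu_{i}(\mathrm{d}s_{i})=K_{i}(t_{i,0})^{n}/n$ with $t_{i,0}:=\inf I_{i}$. Combining the $m$ factors of $1/n$ with the prefactor $1/((n-1)!)^{m}$ produces $1/(n!)^{m}$, and recognising $K_{m+1}^{n}\prod_{i=1}^{m}K_{i}(t_{i,0})^{n}$ as $\bigl(\int_{I(t)}k(t,\tilde{s})^{p}\,\mu(\mathrm{d}\tilde{s})\bigr)^{n}$ delivers the first summand on the right-hand side after taking $p$-th roots.

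For the $l$-piece I would invoke Tonelli to swap the order of integration, writing $\int_{I(t)}w(s)\int_{I(s)}l(t,\tilde{s})^{p}\,\mu(\mathrm{d}\tilde{s})\,\mu(\mathrm{d}s)=\int_{I(t)}l(t,\tilde{s})^{p}\int_{[\tilde{s},t]}w(s)\,\mu(\mathrm{d}s)\,\mu(\mathrm{d}\tilde{s})$, and evaluate the inner integral exactly as above but with each one-dimensional lower limit $t_{i,0}$ replaced by $\tilde{s}_{i}$, which gives $\int_{[\tilde{s}_{i},t_{i}]}k_{i}(t_{i},s_{i})^{p}K_{i}(s_{i})^{n-1}\,\mu_{i}(\mathrm{d}s_{i})=K_{i}(\tilde{s}_{i})^{n}/n$. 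Assembling the factors yields $\int_{[\tilde{s},t]}w(s)\,\mu(\mathrm{d}s)=(n!)^{-m}\bigl(\int_{[\tilde{s},t]}k(t,u)^{p}\,\mu(\mathrm{d}u)\bigr)^{n}$, whence, after taking $p$-th roots, the second summand appears verbatim.

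The equality claims are then read off from the places where inequalities were invoked: in (i) the hypothesis $l\equiv 0$ kills the second summand, constant $v_{0}$ turns $v_{0}(s)\leq\sup_{s\in I(t)}v_{0}(s)$ into an equality, and the one-dimensional integration by parts is an identity; in (ii) $v_{0}\equiv 0$ kills the first summand and forces Minkowski to be an equality, while the hypothesis $l_{i}(t_{i},s_{i})=\tilde{l}_{i}(s_{i})$ (vacuous when $m=0$) turns the monotonicity upgrade $l(s,\tilde{s})\leq l(t,\tilde{s})$ into an equality and keeps the subsequent computation exact. The main obstacle is really just the bookkeeping behind the factor $1/(n!)^{m/p}$, since each of the $m$ one-dimensional integrations by parts contributes an additional $1/n$ that must combine cleanly with the prefactor $1/((n-1)!)^{m/p}$; once the product structure of $k$ and $\mu$ is exploited, no genuine analytic obstacle remains.
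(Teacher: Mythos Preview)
Your proposal is correct and follows essentially the same route as the paper: a Minkowski split into a $v_{0}$-piece and an $l$-piece, the monotonicity upgrade $l(s,\tilde{s})\leq l(t,\tilde{s})$, and then exploitation of the product structure together with the one-dimensional Riemann--Stieltjes fundamental theorem of calculus to produce the factor $1/(n!)^{m}$. The only organisational difference is that for the $l$-piece the paper works factor-by-factor and invokes integration by parts to obtain the identity $\int_{I_{i}(t_{i})}k_{i}(t_{i},s_{i})^{p}K_{i}(s_{i})^{n-1}L_{i}(s_{i})\,\mu_{i}(\mathrm{d}s_{i})=\tfrac{1}{n}\int_{I_{i}(t_{i})}K_{i}(s_{i})^{n}l_{i}(t_{i},s_{i})^{p}\,\mu_{i}(\mathrm{d}s_{i})$, whereas you perform a global Tonelli swap first and then reuse the $v_{0}$-computation over $[\tilde{s},t]$; the two computations are equivalent.
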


Consequently, Proposition~\ref{pr:resolvent sequence inequality} entails two \emph{$L^{p}$-Gronwall sequence inequalities} in different versions of sharpness.

\begin{Corollary}\label{co:Gronwall sequence inequalities}
Let~\eqref{co:5} or~\eqref{co:6} hold and $J$ be a set in $I$ of full measure. If $(u_{n})_{n\in\N_{0}}$ is a sequence of $[0,\infty]$-valued measurable functions on $I$ satisfying~\eqref{eq:resolvent sequence inequality 1} for all $n\in\N$ and $t\in J$, then
\begin{equation}\label{eq:Gronwall sequence inequalities}
\begin{split}
u_{n}(t) &\leq v(t)  + w_{n}(t) + \sum_{i=0}^{n-2}\bigg(\int_{I(t)}\!\frac{k(t,s)^{p}}{(i!)^{m}}\bigg(\int_{[s,t]}\!k(t,\tilde{s})^{p}\,\mu(\mathrm{d}\tilde{s})\bigg)^{i}v(s)^{p}\,\mu(\mathrm{d}s)\bigg)^{\frac{1}{p}}\\
&\leq \sup_{s\in I(t)} v_{0}(s)\sum_{i=0}^{n-1}\frac{1}{(i!)^{\frac{m}{p}}}\bigg(\int_{I(t)}\!k(t,\tilde{s})^{p}\,\mu(\mathrm{d}\tilde{s})\bigg)^{\frac{i}{p}}\\
&\quad + w_{n}(t) + \sum_{i=0}^{n-1}\frac{1}{(i!)^{\frac{m}{p}}}\bigg(\int_{I(t)}\!\bigg(\int_{[s,t]}\!k(t,\tilde{s})^{p}\,\mu(\mathrm{d}\tilde{s})\bigg)^{i}l(t,s)^{p}\,\mu(\mathrm{d}s)\bigg)^{\frac{1}{p}}
\end{split}
\end{equation}
for any $n\in\N$ and $t\in J$, where the measurable function $w_{n}:I\rightarrow [0,\infty]$ is given by
\begin{equation*}
w_{n}(t) := \bigg(\int_{I(t)}\!\frac{k(t,s)^{p}}{((n-1)!)^{m}}\bigg(\int_{[s,t]}\!k(t,\tilde{s})^{p}\,\mu(\mathrm{d}\tilde{s})\bigg)^{n-1}u_{0}(s)^{p}\,\mu(\mathrm{d}s)\bigg)^{\frac{1}{p}}.
\end{equation*}
Moreover, the first inequality in~\eqref{eq:Gronwall sequence inequalities} turns into an identity if $p = 1$,~\eqref{eq:resolvent sequence inequality 1} is an equation and either $m = 0$ or the specific case in~\eqref{eq:specific case} holds for $m\geq 1$.
\end{Corollary}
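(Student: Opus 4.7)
The strategy is to chain together the three preceding results in a disciplined way: Proposition~\ref{pr:resolvent sequence inequality} converts the iterated assumption~\eqref{eq:resolvent sequence inequality 1} into a resolvent expansion, Proposition~\ref{pr:estimates for Gronwall inequalities} replaces each iterated resolvent $\R_{k^{p},\mu,i}$ by the explicit factorial bound available under~\eqref{co:3} or~\eqref{co:4}, and Lemma~\ref{le:integral estimates} converts the resulting integrals in $v$ into the sharper two-term form involving $v_{0}$ and $l$.

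\textbf{Step~1 (resolvent expansion).} First I would invoke Proposition~\ref{pr:resolvent sequence inequality} on the sequence $(u_{n})_{n\in\N_{0}}$, which is applicable verbatim since~\eqref{eq:resolvent sequence inequality 1} holds by hypothesis. This yields
\begin{equation*}
u_{n}(t) \leq v(t) + \sum_{i=1}^{n-1}\bigg(\int_{I(t)}\!\R_{k^{p},\mu,i}(t,s)v(s)^{p}\,\mu(\mathrm{d}s)\bigg)^{\frac{1}{p}} + \bigg(\int_{I(t)}\!\R_{k^{p},\mu,n}(t,s)u_{0}(s)^{p}\,\mu(\mathrm{d}s)\bigg)^{\frac{1}{p}}
\end{equation*}
for every $n\in\N$ and $t\in J$, with equality under the conditions stated there.

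\textbf{Step~2 (first inequality of~\eqref{eq:Gronwall sequence inequalities}).} Next, since either~\eqref{co:3} or~\eqref{co:4} holds (being part of~\eqref{co:5} and~\eqref{co:6} respectively), the estimate~\eqref{eq:estimates for Gronwall inequalities 1} of Proposition~\ref{pr:estimates for Gronwall inequalities} applies to $\R_{k^{p},\mu,i}(t,s)$ for every $i\geq 1$. Substituting this bound into both the last term above (giving exactly $w_{n}(t)$) and into each summand, then reindexing the sum via $i\mapsto i-1$, yields the first inequality in~\eqref{eq:Gronwall sequence inequalities}. Equality here requires both $p=1$ with~\eqref{eq:resolvent sequence inequality 1} an equation (from Step~1) and either $m=0$ or~\eqref{eq:specific case} (from Proposition~\ref{pr:estimates for Gronwall inequalities}), which matches the equality assertion of the corollary.

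\textbf{Step~3 (second inequality of~\eqref{eq:Gronwall sequence inequalities}).} To obtain the sharper bound, I apply Lemma~\ref{le:integral estimates} to each of the $n-1$ summands of the first inequality, with the Lemma's index "$n$" replaced by $i+1$. Summing the resulting two-term estimates over $i=0,\dots,n-2$ gives, after a shift of index to $j:=i+1\in\{1,\dots,n-1\}$, contributions of the form $(j!)^{-m/p}$ with exponents $j/p$ and $j$, exactly matching the $j\geq 1$ terms of~\eqref{eq:Gronwall sequence inequalities}. Finally, the leading $v(t)=v_{0}(t)+(\int_{I(t)}l(t,s)^{p}\mu(\mathrm{d}s))^{1/p}$ supplies precisely the $j=0$ contribution to each of the two sums (using $v_{0}(t)\leq\sup_{s\in I(t)}v_{0}(s)$ and the convention $0!=1$, $(\int_{[s,t]}k^{p}\mathrm{d}\mu)^{0}=1$), after which the range $j=0,\dots,n-1$ is attained. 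Combined with $w_{n}(t)$ from Step~2, this is the second inequality of~\eqref{eq:Gronwall sequence inequalities}.

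\textbf{Main obstacle.} Nothing here is analytically deep; the only thing demanding care is the index-shifting and the verification that the stray leading term $v(t)$ from Step~1 is exactly what promotes the two sums from index range $\{1,\dots,n-1\}$ to $\{0,\dots,n-1\}$. One must also be careful that the equality assertion only claims identity for the \emph{first} inequality (since Lemma~\ref{le:integral estimates} is generally strict), and that the Minkowski/triangle structure of the resolvent expansion in Step~1 is inherited faithfully after substituting the pointwise upper bound of Step~2.
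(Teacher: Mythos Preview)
Your proposal is correct and follows essentially the same route as the paper: apply Proposition~\ref{pr:resolvent sequence inequality}, then feed in the factorial bound~\eqref{eq:estimates for Gronwall inequalities 1} from Proposition~\ref{pr:estimates for Gronwall inequalities} (which is exactly how the paper arrives at~\eqref{eq:estimates for Gronwall inequalities 3}), and finish with Lemma~\ref{le:integral estimates}. Your treatment of the index shift and the absorption of the leading $v(t)$ into the $j=0$ terms is correct, and your remark that the equality claim pertains only to the first inequality matches the paper's statement.
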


As a result, from either Corollary~\ref{co:resolvent inequality} or Corollary~\ref{co:Gronwall sequence inequalities} we directly deduce two types of \emph{$L^{p}$-Gronwall inequalities}.

\begin{Corollary}\label{co:Gronwall inequalities}
Let~\eqref{co:5} or~\eqref{co:6} be valid and $J$ be a set in $I$ of full measure. If $u,u_{0}:I\rightarrow [0,\infty]$ are measurable and satisfy $u_{0}(t)\leq u(t)$,~\eqref{eq:resolvent inequality 1} and
\begin{equation*}
\int_{I(t)}\!k(t,s)^{p}u_{0}(s)^{p}\,\mu(\mathrm{d}s) < \infty
\end{equation*}
for every $t\in J$, then
\begin{equation}\label{eq:Gronwall inequalities}
\begin{split}
u(t) &\leq v(t) + \sum_{n=0}^{\infty}\frac{1}{(n!)^{\frac{m}{p}}}\bigg(\int_{I(t)}\!k(t,s)^{p}\bigg(\int_{[s,t]}\!k(t,\tilde{s})^{p}\,\mu(\mathrm{d}\tilde{s})\bigg)^{n}v(s)^{p}\,\mu(\mathrm{d}s)\bigg)^{\frac{1}{p}}\\
&\leq \sup_{s\in I(t)} v_{0}(s)\sum_{n=0}^{\infty}\frac{1}{(n!)^{\frac{m}{p}}}\bigg(\int_{I(t)}\!k(t,\tilde{s})^{p}\,\mu(\mathrm{d}\tilde{s})\bigg)^{\frac{n}{p}}\\
&\quad + \sum_{n=0}^{\infty}\frac{1}{(n!)^{\frac{m}{p}}}\bigg(\int_{I(t)}\!\bigg(\int_{[s,t]}\!k(t,\tilde{s})^{p}\,\mu(\mathrm{d}\tilde{s})\bigg)^{n}l(t,s)^{p}\,\mu(\mathrm{d}s)\bigg)^{\frac{1}{p}}
\end{split}
\end{equation}
for each $t\in J$ and the last two series are finite. Further, the first estimate in~\eqref{eq:Gronwall inequalities} is an equation if $p = 1$,~\eqref{eq:resolvent inequality 1} is an identity and either $m = 0$ or~\eqref{eq:specific case} holds for $m\geq 1$.
\end{Corollary}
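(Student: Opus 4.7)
The plan is to combine Corollary~\ref{co:resolvent inequality} with the pointwise kernel bound in Proposition~\ref{pr:estimates for Gronwall inequalities} and the integral splitting of Lemma~\ref{le:integral estimates}.

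First, I would verify that the hypotheses of Corollary~\ref{co:resolvent inequality} are met. Under \eqref{co:5} or \eqref{co:6}, Proposition~\ref{pr:estimates for Gronwall inequalities} shows that the assumed integrability $\int_{I(t)}\!k(t,s)^{p}u_{0}(s)^{p}\,\mu(\mathrm{d}s) < \infty$ for $t\in J$ is equivalent to condition \eqref{eq:resolvent inequality condition 2}, which in particular forces \eqref{eq:resolvent inequality condition 1}. Hence Corollary~\ref{co:resolvent inequality} yields
\begin{equation*}
u(t) \leq v(t) + \sum_{n=1}^{\infty}\bigg(\int_{I(t)}\!\R_{k^{p},\mu,n}(t,s)v(s)^{p}\,\mu(\mathrm{d}s)\bigg)^{\frac{1}{p}}
\end{equation*}
for every $t \in J$, together with the promised equality in the case $p=1$ with \eqref{eq:resolvent inequality 1} an identity.

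Second, I would substitute the pointwise bound \eqref{eq:estimates for Gronwall inequalities 1} on $\R_{k^{p},\mu,n}$ into each integral and reindex $n \mapsto n+1$ to arrive at the first inequality of \eqref{eq:Gronwall inequalities}. The equality assertion propagates to this stage because \eqref{eq:estimates for Gronwall inequalities 1} is an identity precisely when $m=0$ or when the specific case \eqref{eq:specific case} applies with $m\geq 1$.

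Third, I would apply Lemma~\ref{le:integral estimates} at index $n+1$ to the generic term of that first sum. Writing $a_{n}(t) := \frac{1}{(n!)^{m/p}}\big(\int_{I(t)}\!k(t,\tilde{s})^{p}\,\mu(\mathrm{d}\tilde{s})\big)^{n/p}$ and $b_{n}(t) := \frac{1}{(n!)^{m/p}}\big(\int_{I(t)}\!\big(\int_{[s,t]}\!k(t,\tilde{s})^{p}\,\mu(\mathrm{d}\tilde{s})\big)^{n}l(t,s)^{p}\,\mu(\mathrm{d}s)\big)^{1/p}$, the lemma furnishes the majorant $\sup_{s\in I(t)}v_{0}(s)\,a_{n+1}(t) + b_{n+1}(t)$ for the generic summand. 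Summing over $n\geq 0$ covers indices $n\geq 1$ in the two target sums of \eqref{eq:Gronwall inequalities}, and the standalone summand $v(t) = v_{0}(t) + b_{0}(t) \leq \sup_{s\in I(t)}v_{0}(s)\,a_{0}(t) + b_{0}(t)$ supplies precisely the missing $n=0$ contributions.

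Finally, I would check finiteness of the two series. Under \eqref{co:5} we have $m\geq 1$, so $(n!)^{m/p}$ in the denominator dominates any polynomial-type growth from the integrals and the ratio test guarantees absolute convergence (this can be made explicit via the entire function $\mathrm{E}_{1,1,p}$ from \eqref{eq:extension of the Mittag-Leffler function}). Under \eqref{co:6} we have $m=0$ and both sums become geometric in the ratio $(\int_{I}\!k_{1}^{p}\,\mathrm{d}\mu)^{1/p} < 1$, hence they converge. The main obstacle is purely combinatorial bookkeeping: tracking the index shifts between the resolvent series (indexed from $n=1$), the Gronwall sum in the first inequality of \eqref{eq:Gronwall inequalities} (indexed from $n=0$), and the lemma's internal shift $n-1 \mapsto n$; and recognising that the isolated $v(t)$ is precisely what restores the $n=0$ summands of the second inequality.
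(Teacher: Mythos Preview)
Your proposal is correct and follows essentially the same route as the paper: verify \eqref{eq:resolvent inequality condition 1} via Proposition~\ref{pr:estimates for Gronwall inequalities}, apply Corollary~\ref{co:resolvent inequality}, insert the kernel bound \eqref{eq:estimates for Gronwall inequalities 1} (equivalently \eqref{eq:estimates for Gronwall inequalities 3}) for the first inequality, invoke Lemma~\ref{le:integral estimates} for the second, and conclude finiteness by the ratio test. Your explicit bookkeeping of the index shifts and the identification $v(t)=v_{0}(t)+b_{0}(t)$ as the missing $n=0$ contribution is exactly what the paper leaves implicit.
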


\begin{Example}
For $m = p = 1$ we obtain two \emph{Gronwall inequalities in time and space variables}. Namely, the estimates~\eqref{eq:Gronwall inequalities} reduce to
\begin{align*}
u(t) &\leq v(t) + \int_{I(t)}\!k(t,s)e^{\int_{[s,t]}\!k(t,\tilde{s})\,\mu(\mathrm{d}\tilde{s})}v(s)\,\mu(\mathrm{d}s)\\
&\leq e^{\int_{I(t)}\!k(t,s)\,\mu(\mathrm{d}s)}\sup_{s'\in I(t)}v_{0}(s') + \int_{I(t)}\!e^{\int_{[s,t]}\!k(t,\tilde{s})\,\mu(\mathrm{d}\tilde{s})}l(t,s)\,\mu(\mathrm{d}s)
\end{align*}
for any $t\in J$. If instead $m = 0$, then we recover \emph{Gronwall inequalities of Fredholm type} in the $L^{p}$-norm, since in this case the estimates~\eqref{eq:Gronwall inequalities} become
\begin{align*}
u(t) &\leq v(t) + \bigg(1 - \bigg(\int_{I}\!k_{1}(s)^{p}\,\mu(\mathrm{d}s)\bigg)^{\frac{1}{p}}\bigg)^{-1}\bigg(\int_{I}\!k_{1}(s)^{p}v(s)^{p}\,\mu(\mathrm{d}s)\bigg)^{\frac{1}{p}}\\
&\leq \bigg(1 - \bigg(\int_{I}\!k_{1}(s)^{p}\,\mu(\mathrm{d}s)\bigg)^{\frac{1}{p}}\bigg)^{-1}\bigg(\sup_{s'\in I}v_{0}(s') + \bigg(\int_{I}\!l_{1}(s)^{p}\,\mu(\mathrm{d}s)\bigg)^{\frac{1}{p}}\bigg)\quad\text{for all $t\in J$.}
\end{align*}
\end{Example}

\subsection{General applications of the fixed point result}\label{se:2.4}

We discuss two kinds of applications of Theorem~\ref{th:fixed point} in which $X$ is a metric space or $(d_{t})_{t\in I}$ is an increasing family of pseudometrics. In this connection, sufficient conditions for the uniqueness of fixed points and the sequential continuity of $\Psi$ are given.

Let us recall that initially $(d_{t})_{t\in I}$ is a family of $[0,\infty]$-valued functions on $X\times X$ satisfying~\eqref{eq:pseudometrical condition 1} and~\eqref{eq:pseudometrical condition 2} for all $x,\tilde{x}\in X$. Clearly, there could be just one premetric $d$ on $X$ such that
\begin{equation*}
d_{t} = d\quad\text{for all $t\in I$,}
\end{equation*}
in which case the required completeness of $(d_{t})_{t\in I}$ in Theorem~\ref{th:fixed point} is equivalent to that of $d$. In particular, this includes spaces of maps endowed with the topology of uniform convergence.

\begin{Example}[Supremum premetric]\label{ex:supremum premetric}
Suppose that $E$ is a premetrisable space and $\rho$ is a premetric inducing its topology. On the set $E^{I}$ of all $E$-valued maps on $I$ we define a premetric $d_{\infty}$ by 
\begin{equation*}
d_{\infty}(x,\tilde{x}) := \sup_{t\in I}\rho\big(x(t),\tilde{x}(t)\big).
\end{equation*}
Then a sequence in $E^{I}$ converges with respect to $d_{\infty}$ if and only if it converges uniformly, and $d_{\infty}$ is complete if $\rho$ satisfies this property. Further, let us call a map $x:I\rightarrow E$ bounded if
\begin{equation*}
\sup_{t\in I} \rho(x(t),u) < \infty\quad\text{for some $u\in E$,}
\end{equation*}
and note that the set of all $E$-valued bounded maps on $I$ is closed relative to $d_{\infty}$. So, let $X\subseteq E^{I}$ and $d$ be the restriction of $d_{\infty}$ to $X\times X$. Then the following two statements hold:
\begin{enumerate}[(1)]
\item If $\rho$ is complete, then $X$ is closed with respect to $d_{\infty}$ if and only if $d$ is complete.

\item If $\rho$ is a metric and $X$ consists of bounded maps only, then $d$ is a metric.
\end{enumerate}
\end{Example}

By extending Example~\ref{ex:supremum premetric}, we obtain an increasing family $(d_{t})_{t\in I}$ of pseudopremetrics and allow for spaces of maps equipped with the topology of local uniform convergence.

\begin{Example}\label{ex:increasing family of supremum pseudopremetrics}
Let $E$ be a premetrisable space and $\rho$ be a premetric inducing its topology. We endow $E^{I}$ with the topology of converge with respect to the pseudopremetric $d_{\infty,t}$ on $E^{I}$ given by
\begin{equation*}
d_{\infty,t}(x,\tilde{x}) := \sup_{s\in I(t)} \rho\big(x(s),\tilde{x}(s)\big)\quad\text{for any $t\in I$.}
\end{equation*}
That is, a sequence in $E^{I}$ converges to a map $x:I\rightarrow E$ if and only if it converges uniformly to $x$ on $I(t)$ for each $t\in I$. Then $(d_{\infty,t})_{t\in I}$ is complete as soon as $\rho$ is, and the set
\begin{equation*}
\{x:I\rightarrow E\,|\,\text{$x$ is bounded on $I(t)$ for any $t\in I$}\}
\end{equation*}
is closed. Thus, we let $X\subseteq E^{I}$ and $d_{t}$ be the restriction of $d_{\infty,t}$ to $X\times X$ for each $t\in I$. Then the subsequent two statements are valid:
\begin{enumerate}[(1)]
\item If $\rho$ is complete, then $X$ is closed in $E^{I}$ if and only if $(d_{t})_{t\in I}$ is complete.

\item If $\rho$ is a metric and each $x\in X$ is bounded on $I(t)$ for any $t\in I$, then $d_{t}$ is a pseudometric for each $t\in I$.
\end{enumerate}
In particular, if $I$ is a locally compact metrisable space, $I(t)$ is relatively compact for each $t\in I$ and for every compact set $K$ in $I$ we have
\begin{equation*}
K\subseteq I(t)\quad\text{for some $t\in I$,}
\end{equation*}
then a sequence $(x_{n})_{n\in\N}$ in $X$ converges to some $x\in X$ in the sense of~\eqref{eq:underlying topology} if and only if $(x_{n})_{n\in\N}$ converges locally uniformly to $x$.
\end{Example}

Next, let us simplify the assumptions of Lemma~\ref{le:uniqueness of fixed points}, which leads to unique fixed points.

\begin{Lemma}\label{le:uniqueness of fixed points 2}
Under~\eqref{co:1}, each of the following conditions entails the subsequent one:
\begin{enumerate}[(i)]
\item The family $(d_{t})_{t\in I}$ is increasing and $\mathrm{I}_{\lambda,\mu,p}(t) < \infty$ for all $t\in I$.

\item We have $\sum_{n=1}^{\infty}(\int_{I(t)}\!\R_{\lambda^{p},\mu,n}(t,s)\Lambda(s,x,\tilde{x})^{p}\,\mu(\mathrm{d}s))^{\frac{1}{p}} < \infty$ for any $t\in I$ and $x,\tilde{x}\in X$ with $d_{t}(x,\tilde{x}) < \infty$.

\item The limit~\eqref{eq:condition for unique fixed points} and the hypothesis~\eqref{eq:fixed point condition} hold for all $t\in I$ and $x,\tilde{x},x_{0}\in X$ for which $d_{t}(x,\tilde{x})$ and $d_{t}(x_{0},\Psi(x_{0}))$ are finite, respectively.
\end{enumerate}
In particular, if $d_{t} < \infty$ for each $t\in I$ and one of these conditions holds, then there is at most a unique fixed point of $\Psi$.
\end{Lemma}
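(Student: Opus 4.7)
The plan is to prove the chain of implications (i) $\Rightarrow$ (ii) $\Rightarrow$ (iii) and then reduce the uniqueness assertion to Lemma~\ref{le:uniqueness of fixed points}. The core observation is that condition~\eqref{co:1} already pins $\Lambda(s,x,\tilde{x})$ beneath $d_{s}(x,\tilde{x})$, so monotonicity of the family $(d_{t})_{t\in I}$ converts a finiteness hypothesis at the single time $t$ into a uniform pointwise bound on $\Lambda(\cdot,x,\tilde{x})$ over $I(t)$.

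For (i) $\Rightarrow$ (ii), fix $t\in I$ and $x,\tilde{x}\in X$ with $d_{t}(x,\tilde{x})<\infty$. Since $(d_{t})_{t\in I}$ is increasing, condition~\eqref{co:1} gives $\Lambda(s,x,\tilde{x}) \leq d_{s}(x,\tilde{x}) \leq d_{t}(x,\tilde{x})$ for every $s\in I(t)$. Pulling this constant out of the $L^{p}$-integral and summing yields
\begin{equation*}
\sum_{n=1}^{\infty}\bigg(\int_{I(t)}\R_{\lambda^{p},\mu,n}(t,s)\Lambda(s,x,\tilde{x})^{p}\,\mu(\mathrm{d}s)\bigg)^{\frac{1}{p}} \leq d_{t}(x,\tilde{x})\,\mathrm{I}_{\lambda,\mu,p}(t),
\end{equation*}
which is finite by the second half of (i).

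For (ii) $\Rightarrow$ (iii), the hypothesis~\eqref{eq:fixed point condition} at a point $x_{0}$ with $d_{t}(x_{0},\Psi(x_{0}))<\infty$ is literally the summability statement (ii) applied with $x=x_{0}$ and $\tilde{x}=\Psi(x_{0})$. For~\eqref{eq:condition for unique fixed points}, finiteness of the series in (ii) forces its general term to tend to zero, which is equivalent to $\int_{I(t)}\R_{\lambda^{p},\mu,n}(t,s)\Lambda(s,x,\tilde{x})^{p}\,\mu(\mathrm{d}s) \to 0$ as $n\uparrow\infty$.

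Finally, if $d_{t}<\infty$ for each $t\in I$ and any one of (i)--(iii) holds, then the chain just established shows that (iii) holds, hence~\eqref{eq:condition for unique fixed points} is valid for all $t\in I$ and all $x,\tilde{x}\in X$ (no restriction to $\Psi(X)$ is needed because every pair satisfies the finiteness proviso). Lemma~\ref{le:uniqueness of fixed points} then delivers the uniqueness of a fixed point. No genuine obstacle is expected; the only thing worth flagging is that the implication (i) $\Rightarrow$ (ii) is where both assumptions of (i) are actually used, the increasing property to transport $\Lambda(s,\cdot,\cdot)$ past the integral and the finiteness of $\mathrm{I}_{\lambda,\mu,p}(t)$ to close the estimate.
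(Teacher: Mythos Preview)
Your proof is correct and follows essentially the same route as the paper: bound $\Lambda(s,x,\tilde{x})\leq d_{s}(x,\tilde{x})\leq d_{t}(x,\tilde{x})$ via monotonicity to get the series dominated by $d_{t}(x,\tilde{x})\mathrm{I}_{\lambda,\mu,p}(t)$ for (i) $\Rightarrow$ (ii), observe that convergence of the series forces the general term to zero and that~\eqref{eq:fixed point condition} is the special case $x=x_{0}$, $\tilde{x}=\Psi(x_{0})$ for (ii) $\Rightarrow$ (iii), and then invoke Lemma~\ref{le:uniqueness of fixed points} for the uniqueness claim. The paper's own proof is terser but proceeds identically.
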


The assumed sequentially continuity of $\Psi$ in Theorem~\ref{th:fixed point} can be ensured as follows.

\begin{Lemma}\label{le:sequential continuity}
Let~\eqref{co:1} and~\eqref{co:2} be valid. Then each of the following hypotheses implies the subsequent one:
\begin{enumerate}[(i)]
\item The family $(d_{t})_{t\in I}$ is increasing and $\int_{I(t)}\!\lambda(t,s)^{p}\,\mu(\mathrm{d}s) < \infty$ for all $t\in I$.

\item For each $t\in I$ and every sequence $(x_{n})_{n\in\N}$ in $X$ that converges to some $x\in X$, there is $n_{0}\in\N$ such that $(\lambda(t,\cdot)^{p}\Lambda(\cdot,x_{n},x)^{p})_{n\in\N:\,n\geq n_{0}}$ is uniformly integrable.

\item The map $\Psi$ is sequentially continuous.
\end{enumerate}
\end{Lemma}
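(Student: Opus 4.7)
The plan is to prove the chain (i) $\Rightarrow$ (ii) $\Rightarrow$ (iii) directly. Both implications rest on the inequality $\Lambda(s, x, \tilde{x}) \le d_s(x, \tilde{x})$ granted by condition~\eqref{co:1}, together with the characterisation~\eqref{eq:underlying topology} of sequential convergence in $X$.

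For (i) $\Rightarrow$ (ii), I fix $t \in I$ and a sequence $(x_n)$ in $X$ converging to some $x \in X$, so that $d_t(x_n, x) \to 0$. I choose $n_0 \in \N$ with $d_t(x_n, x) \le 1$ for every $n \ge n_0$. Since $(d_s)_{s \in I}$ is increasing, for every $s \in I(t)$ (that is, $s \le t$) and $n \ge n_0$ I get
\[
\Lambda(s, x_n, x) \le d_s(x_n, x) \le d_t(x_n, x) \le 1,
\]
whence $\lambda(t, \cdot)^p \Lambda(\cdot, x_n, x)^p \le \lambda(t, \cdot)^p$ on $I(t)$. The right-hand side is $\mu$-integrable by the hypothesis in (i), and domination of the whole family by a single $\mu$-integrable function yields uniform integrability.

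For (ii) $\Rightarrow$ (iii), I take a sequence $x_n \to x$ in $X$ and invoke~\eqref{co:1} to write
\[
d_t\bigl(\Psi(x_n), \Psi(x)\bigr) \le \biggl(\int_{I(t)} \lambda(t, s)^p \Lambda(s, x_n, x)^p\,\mu(\mathrm{d}s)\biggr)^{1/p}
\]
for each $t \in I$. The bound $\Lambda(s, x_n, x) \le d_s(x_n, x)$ together with~\eqref{eq:underlying topology} applied at each fixed $s$ gives pointwise convergence of the integrands to zero; combined with the uniform integrability supplied by (ii), Vitali's convergence theorem forces the integral to vanish as $n \uparrow \infty$. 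This yields $d_t(\Psi(x_n), \Psi(x)) \to 0$ for every $t \in I$, which is exactly the asserted sequential continuity of $\Psi$.

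The only point needing attention is the invocation of Vitali's theorem on the $\sigma$-finite space $(I, \mathcal{I}, \mu)$, which, depending on convention, requires uniform integrability together with a tightness condition. Under (i) tightness is automatic because the integrable dominator $\lambda(t, \cdot)^p$ together with the $\sigma$-finiteness of $\mu$ supplies it; in the step (ii) $\Rightarrow$ (iii) one adopts uniform integrability in the form under which Vitali applies. Beyond this bookkeeping, the chain is routine and no additional hypothesis is needed.
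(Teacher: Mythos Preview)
Your proof is correct and follows essentially the same route as the paper: for (i) $\Rightarrow$ (ii) you dominate by the integrable function $\lambda(t,\cdot)^{p}$ after bounding $\Lambda(s,x_{n},x)\leq d_{t}(x_{n},x)$ via the increase of $(d_{t})_{t\in I}$, and for (ii) $\Rightarrow$ (iii) you combine pointwise convergence $\Lambda(s,x_{n},x)\to 0$ with uniform integrability to pass to the limit under the integral. The paper resolves your caveat about Vitali on $\sigma$-finite spaces by citing Bauer's characterisation of $L^{p}$-convergence (Theorem~21.4 and Corollary~21.5 in~\cite{Bau01}), whose notion of uniform integrability is precisely the one under which the implication goes through without a separate tightness hypothesis.
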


To motivate applications of Theorem~\ref{th:fixed point} when $(I,\mathcal{I})$ is merely a measurable space and $X$ is a metric space, we recall Example~\ref{ex:supremum premetric} and generalise Example~\ref{ex:fixed point thm} as follows.

\begin{Example}
Let $\leq$ be void and $d_{t} = d$ for all $t\in I$ and a metric $d$ inducing the topology of $X$. Then~\eqref{co:1} follows from the subsequent regularity condition on $\Psi$:
\begin{enumerate}[(1)]
\item There are $\Lambda:I\times X\times X\rightarrow [0,\infty]$, a $\sigma$-finite measure $\mu$ on $\mathcal{I}$, $p\geq 1$ and a measurable function $\lambda:I\rightarrow [0,\infty]$ such that $\Lambda(\cdot,x,\tilde{x})$ is measurable,
\begin{equation*}
d\big(\Psi(x),\Psi(\tilde{x})\big) \leq \bigg(\int_{I}\!\lambda(t)^{p}\Lambda(t,x,\tilde{x})^{p}\,\mu(\mathrm{d}t)\bigg)^{\frac{1}{p}}
\end{equation*}
and $\Lambda(\cdot,x,\tilde{x}) \leq d(x,\tilde{x})$ for all $x,\tilde{x}\in X$.
\end{enumerate}

Hence, let condition~(1) hold and $\lambda_{0} :=(\int_{I}\!\lambda(t)^{p}\,\mu(\mathrm{d}t))^{\frac{1}{p}}$ be finite. Then $\lambda_{0}$ is a Lipschitz constant of $\Psi$ and from Example~\ref{ex:void preorder} and Lemma~\ref{le:uniqueness of fixed points 2} we infer that if
\begin{equation*}
\lambda_{0} < 1,
\end{equation*}
then~\eqref{eq:condition for unique fixed points} and~\eqref{eq:fixed point condition} are satisfied for all $t\in I$ and $x,\tilde{x},x_{0}\in X$ and there is at most a unique fixed point of $\Psi$.

If in addition $d$ is complete, then $\Psi$ has a unique fixed point $\hat{x}$, and for any $x_{0}\in X$ it is the limit of the sequence $(x_{n})_{n\in\N}$ in $X$ recursively given by $x_{n}:= \Psi(x_{n-1})$, by Theorem~\ref{th:fixed point}. Moreover,~\eqref{eq:fixed point error estimate} becomes
\begin{equation*}
d(x_{n},\hat{x})\leq \bigg(\int_{I}\!\lambda(t)^{p}\Lambda(t,x_{0},\Psi(x_{0}))^{p}\,\mu(\mathrm{d}t)\bigg)^{\frac{1}{p}}\frac{\lambda_{0}^{n-1}}{1-\lambda_{0}}
\end{equation*}
for each $n\in\N$ and this bound cannot exceed the error estimate $d(x_{0},\Psi(x_{0}))\frac{\lambda_{0}^{n}}{1-\lambda_{0}}$ in Banach's fixed point theorem.
\end{Example}

Finally, based on Example~\ref{ex:increasing family of supremum pseudopremetrics}, let us indicate applications of Theorem~\ref{th:fixed point} when $d_{t}$ is a pseudometric but $\Psi$ may fail to be a contraction relative to $d_{t}$ for any $t\in I$.

\begin{Example}
Let $I$ be a non-degenerate interval in $\Re$, $\mathcal{I} =\mathcal{B}(I)$ and $\leq$ be the ordinary order. Then the following regularity condition on $\Psi$ implies~\eqref{co:1}:
\begin{enumerate}[(1)]
\item There are $\Lambda:I\times X\times X\rightarrow [0,\infty]$, a $\sigma$-finite Borel measure $\mu$ on $I$, $p\geq 1$ and a non-negative kernel $\lambda$ on $I$ satisfying~\eqref{eq:monotonicity condition} such that $\Lambda(\cdot,x,\tilde{x})$ is measurable,
\begin{equation*}
d_{t}\big(\Psi(x),\Psi(\tilde{x})\big) \leq \bigg(\int_{I(t)}\!\lambda(t,s)^{p}\Lambda(s,x,\tilde{x})^{p}\,\mu(\mathrm{d}s)\bigg)^{\frac{1}{p}},
\end{equation*}
$\Lambda(t,x,\tilde{x}) \leq d_{t}(x,\tilde{x})$ and $\mu(\{t\}) = 0$ for any $t\in I$ and $x,\tilde{x}\in X$.
\end{enumerate}

Thus, let us suppose that $(d_{t})_{t\in I}$ is an increasing family of pseudometrics, condition~(1) holds and the measurable function $\lambda_{0}:I\rightarrow [0,\infty]$ defined by
\begin{equation*}
\lambda_{0}(t):= \bigg(\int_{I(t)}\!\lambda(t,s)^{p}\,\mu(\mathrm{d}s)\bigg)^{\frac{1}{p}}
\end{equation*}
is finite. Then $\lambda_{0}(t)$ is a Lipschitz constant of $\Psi$ relative to $d_{t}$ and~\eqref{eq:condition for unique fixed points} and~\eqref{eq:fixed point condition} hold for all $t\in I$ and $x,\tilde{x},x_{0}\in X$, by Example~\ref{ex:regular kernels on intervals} and Lemma~\ref{le:uniqueness of fixed points 2}. In particular, there is at most a unique fixed point of $\Psi$.

Further, if $(d_{t})_{t\in I}$ is complete, then $\Psi$ has a unique fixed point $\hat{x}$, by Theorem~\ref{th:fixed point}. In this case, from the error estimate~\eqref{eq:fixed point error estimate} for the Picard sequence $(x_{n})_{n\in\N}$ we obtain that
\begin{align*}
d_{t}(x_{n},\hat{x}) &\leq \sum_{i=n}^{\infty}\bigg(\int_{I(t)}\!\frac{\lambda(t,s)^{p}}{(i-1)!}\bigg(\int_{s}^{t}\!\lambda(t,\tilde{s})^{p}\,\mu(\mathrm{d}\tilde{s})\bigg)^{i-1}\Lambda(s,x_{0},\Psi(x_{0}))^{p}\,\mu(\mathrm{d}s)\bigg)^{\frac{1}{p}}\\
&\leq d_{t}\big(x_{0},\Psi(x_{0})\big)\sum_{i=n}^{\infty}\bigg(\frac{1}{i!}\bigg)^{\frac{1}{p}}\lambda_{0}(t)^{i}
\end{align*}
for all $n\in\N$ and fixed $t\in I$, and the last term converges to zero as $n\uparrow\infty$, as the radius of convergence of the power series representing $\mathrm{E}_{1,1,p}$, which is generally given by~\eqref{eq:extension of the Mittag-Leffler function}, is infinite.

In particular, a fixed point approach of this type can be used to derive unique strong solutions to \emph{McKean-Vlasov SDEs}, as shown in \cite[Theorem~3.24]{KalMeyPro24} and \cite[Theorem~3.1]{KalMeyPro24-2}.
\end{Example}

\section{Proofs of the main and consequential results}\label{se:3}

\subsection{Proofs of the resolvent inequalities and the fixed point theorem}

\begin{proof}[Proof of Lemma~\ref{le:induction principle}]
The asserted estimate follows immediately by induction, since if $u_{n} \leq \Psi^{n}(u_{0})$ on $J$ for some $n\in\N$, then we have $u_{n+1} \leq \Psi(u_{n}) \leq \Psi\circ \Psi^{n}(u_{0}) = \Psi^{n+1}(u_{0})$ on $J$. Further, if the inequalities in~\eqref{eq:induction principle 1} and~\eqref{eq:induction principle 2} are equations, then the two inequalities in the preceding implication turn into equations as soon as we assume that $u_{n} = \Psi^{n}(u_{0})$ on $J$ for some $n\in\N$.
\end{proof}

\begin{proof}[Proof of Proposition~\ref{pr:resolvent sequence inequality}]
By Lemma~\ref{le:induction principle} and Example~\ref{ex:induction principle}, it suffices to the consider the case that equality holds in~\eqref{eq:resolvent sequence inequality 1}. That is, $u_{n} = \Psi^{n}(u_{0})$ for any $n\in\N$, where $\mathcal{D}$ is the convex cone of all $[0,\infty]$-valued measurable functions on $I$, the operator $\Psi:\mathcal{D}\rightarrow\mathcal{D}$ is given by
\begin{equation*}
\Psi(u)(t) := v(t) + \bigg(\int_{I(t)}\!k(t,s)^{p}u(s)^{p}\,\mu(\mathrm{d}s)\bigg)^{\frac{1}{p}}
\end{equation*}
and $\Psi^{n}$ is the $n$-fold composition of $\Psi$ with itself. So, let us show the claim by induction over $n\in\N$. Since for $n=1$ the claimed estimate already agrees with the imposed equation~\eqref{eq:resolvent sequence inequality 1}, we may assume that the assertion holds for some $n\in\N$. Then
\begin{equation}\label{eq:resolvent sequence auxiliary inequality}
\begin{split}
u_{n+1}(t) &\leq v(t) + \bigg(\int_{I(t)}\!k(t,s)^{p}v(s)^{p}\,\mu(\mathrm{d}s)\bigg)^{\frac{1}{p}}\\
&\quad + \sum_{i=1}^{n-1}\bigg(\int_{I(t)}\!\int_{[s,t]}\!k(t,\tilde{s})^{p}\R_{k^{p},\mu,i}(\tilde{s},s)\,\mu(\mathrm{d}\tilde{s})\,v(s)^{p}\,\mu(\mathrm{d}s)\bigg)^{\frac{1}{p}}\\
&\quad + \bigg(\int_{I(t)}\!\int_{[s,t]}\! k(t,\tilde{s})^{p}\R_{k^{p},\mu,n}(\tilde{s},s)\,\mu(\mathrm{d}\tilde{s})\,u_{0}(s)^{p}\,\mu(\mathrm{d}s)\bigg)^{\frac{1}{p}}
\end{split}
\end{equation}
for all $t\in J$, by induction hypothesis, Minkowski's inequality and Fubini's theorem. The desired estimate follows, since the general recursive description in~\eqref{eq:resolvent sequence} entails that
\begin{equation*}
\R_{k^{p},\mu,i+1}(t,s) = \int_{[s,t]}\!k(t,\tilde{s})^{p}\R_{k^{p},\mu,i}(\tilde{s},s)\,\mu(\mathrm{d}\tilde{s})
\end{equation*}
for any $i\in\{1,\dots,n\}$ and $s,t\in I$ with $s\leq t$. Further, for $p = 1$ we do not need to apply Minkowski's inequality and~\eqref{eq:resolvent sequence auxiliary inequality} turns into an identity once equality holds in~\eqref{eq:resolvent sequence inequality 2} for each $t\in J$.
\end{proof}

\begin{proof}[Proof of Corollary~\ref{co:resolvent inequality}]
The assertion follows from Proposition~\ref{pr:resolvent sequence inequality} by taking $u_{n} = u$ for all $n\in\N$, as the increasing sequence $(\sum_{i=1}^{n-1}(\int_{I(t)}\!\R_{k^{p},\mu,i}(t,s)v(s)^{p}\,\mu(\mathrm{d}s))^{\frac{1}{p}})_{n\in\N}$ converges to its supremum for any $t\in I$.
\end{proof}

\begin{proof}[Proof of Lemma~\ref{le:uniqueness of fixed points}]
Let $x$ and $\tilde{x}$ be two fixed points of $\Psi$. Then $u,u_{0}:I\rightarrow [0,\infty]$ given by $u(t) := d_{t}(x,\tilde{x})$ and $u_{0}(t) := \Lambda(t,x,\tilde{x})$ are measurable and satisfy $u_{0}(t) \leq u(t)$ and $u(t)^{p} \leq \int_{I(t)}\!\lambda(t,s)^{p}u_{0}(s)^{p}\,\mu(\mathrm{d}s)$ for all $t\in I$. Hence, Corollary~\ref{co:resolvent inequality} gives $u=0$, which in turn yields that $x = \tilde{x}$, by condition~\eqref{eq:pseudometrical condition 1}.
\end{proof}

\begin{proof}[Proof of Theorem~\ref{th:fixed point}]
We merely have to prove the existence and convergence assertions, as Lemma~\ref{le:uniqueness of fixed points} shows uniqueness. To this end, let us check that $(x_{n})_{n\in\N}$ is a Cauchy sequence. First,
\begin{equation*}
d_{t}(x_{n},x_{n+1}) \leq \bigg(\int_{I(t)}\!\R_{\lambda^{p},\mu,n}(t,s)\Lambda(s,x_{0},\Psi(x_{0}))^{p}\,\mu(\mathrm{d}s)\bigg)^{\frac{1}{p}}
\end{equation*}
for all $n\in\N$ and fixed $t\in I$, as an application of Proposition~\ref{pr:resolvent sequence inequality} shows. For this reason, the triangle inequality yields that
\begin{equation}\label{eq:Picard estimation}
d_{t}(x_{n},x_{m}) \leq \sum_{i=n}^{m-1}\bigg(\int_{I(t)}\!\R_{\lambda^{p},\mu,i}(t,s)\Lambda(s,x_{0},\Psi(x_{0}))^{p}\,\mu(\mathrm{d}s)\bigg)^{\frac{1}{p}}
\end{equation}
for any $m,n\in\N$ with $m > n$. Thus, $(x_{n})_{n\in\N}$ is a Cauchy sequence with respect to the pseudopremetric $d_{t}$. Namely, $\lim_{n\uparrow\infty}\sup_{m\in\N:\,m\geq n} d_{t}(x_{n},x_{m}) = 0$. Since $t\in I$ was arbitrarily chosen, there exists a unique $\hat{x}\in X$ to which $(x_{n})_{n\in\N}$ converges.

As a result, the error estimate~\eqref{eq:fixed point error estimate} follows from~\eqref{eq:Picard estimation} by taking the limit $m\uparrow\infty$. Finally, the sequential continuity of $\Psi$ gives $\lim_{n\uparrow\infty} x_{n+1} = \Psi(\hat{x})$, which in turn yields that $\hat{x} = \Psi(\hat{x})$.
\end{proof}

\begin{proof}[Proof of Lemma~\ref{le:uniqueness of fixed points 2}]
Since the second assertion is a direct consequence of Lemma~\ref{le:uniqueness of fixed points}, it suffices to verify the two claimed implications. (i) $\Rightarrow$ (ii): According to~\eqref{co:1}, we have $\sum_{n=1}^{\infty}(\int_{I(t)}\!\R_{\lambda^{p},\mu,n}(t,s)\Lambda(s,x,\tilde{x})^{p}\,\mu(\mathrm{d}s))^{\frac{1}{p}} $ $\leq d_{t}(x,\tilde{x})\mathrm{I}_{\lambda,\mu,p}(t)$ for any $t\in I$ and $x,\tilde{x}\in X$.

(ii) $\Rightarrow$ (iii): The condition~\eqref{eq:condition for unique fixed points} is necessary for the convergence of the series appearing in~(ii) for any $t\in I$ and $x,\tilde{x}\in X$. Moreover,~\eqref{eq:fixed point condition} is merely a special case of the finiteness condition in~(ii) when $x=x_{0}$ and $\tilde{x} = \Psi(x_{0})$ for each $x_{0}\in X$.
\end{proof}

\begin{proof}[Proof of Lemma~\ref{le:sequential continuity}]
(i) $\Rightarrow$ (ii): For any $t\in I$ and each sequence $(x_{n})_{n\in\N}$ in $X$ converging to some $x\in X$, there are $n_{0}\in\N$ and $c\geq 0$ such that $\Lambda(s,x_{n},x)\leq d_{t}(x_{n},x)\leq c$ for all $n\in\N$ with $n\geq n_{0}$ and $s\in I(t)$. Thus, as $(\lambda(t,\cdot)^{p}\Lambda(\cdot,x_{n},x)^{p})_{n\in\N:\,n\geq n_{0}}$ is dominated by an $[0,\infty]$-valued measurable integrable function, it is uniformly integrable.

(ii) $\Rightarrow$ (iii): Let $x\in X$ be the limit of a sequence $(x_{n})_{n\in\N}$ in $X$. By~\eqref{co:1}, we have $\lim_{n\uparrow\infty} \Lambda(s,x_{n},x) = 0$ for all $s\in I$, and from the description of convergence in $p$-th mean in~\cite[Theorem~21.4 and Corollary~21.5]{Bau01} we obtain that $\lim_{n\uparrow\infty}\int_{I(t)}\!\lambda(t,s)^{p}\Lambda(s,x_{n},x)^{p}\,\mu(\mathrm{d}s)$ $= 0$ for each $t\in I$. From this we conclude that $\lim_{n\uparrow\infty} \Psi(x_{n}) = \Psi(x)$.
\end{proof}

\subsection{Proofs of the results on kernels and resolvent sequences}

\begin{proof}[Proof of Lemma~\ref{le:sufficient criterion}]
There is $n_{t}\in\N$ such that $\int_{I(t)}\!\R_{k^{p},\mu,n}(t,s)u_{0}(s)^{p}\,\mu(\mathrm{d}s)$ is bounded by $\|k_{n}(t,\cdot)\|_{p_{n},t}\|l_{n}(t,\cdot)u_{0}^{p}\|_{q_{n},t}$ for each $n\in\N$ with $n\geq n_{t}$, by H\"{o}lder's inequality. This estimate implies the claims of the lemma.
\end{proof}

\begin{proof}[Proof of Lemma~\ref{le:resolvents of sums of kernels}]
We show the two assertions by induction. In the initial case $n = 1$ the definition $\R_{k,\mu,1,j} = k_{j} = \R_{k_{j},\mu,1}$ for all $j\in\{1,\dots,N\}$ verifies the claims. So, let us suppose that the two claims hold for some $n\in\N$.

Then we have $\R_{k,\mu,n+1,(i,\dots,i)}(t,s) = \int_{[s,t]}\!k_{i}(t,\tilde{s})\R_{k,\mu,n,(i,\dots,i)}(\tilde{s},s)\,\mu(\mathrm{d}\tilde{s}) = \R_{k_{i},\mu,n+1}(t,s)$ and
\begin{equation*}
\R_{k,\mu,n+1}(t,s) = \sum_{j_{n+1}\in\{1,\dots,N\}}\sum_{j\in\{1,\dots,N\}^{n}}\int_{[s,t]}\!k_{j_{n+1}}(t,\tilde{s})\R_{k,\mu,n,(j_{1},\dots,j_{n})}(\tilde{s},s)\,\mu(\mathrm{d}\tilde{s}),
\end{equation*}
which equals $\sum_{j\in\{1,\dots,N\}^{n+1}}\R_{k,\mu,n + 1,j}(t,s)$, for all $s,t\in I$ with $s\leq t$, by the recursive definitions~\eqref{eq:resolvent sequence} and~\eqref{eq:resolvents of sums of kernels}. Thus, the induction proof is complete.
\end{proof}

\begin{proof}[Proof of Proposition~\ref{pr:family of functions}]
(i) We verify all the assertions inductively. In the initial case $n=1$ the representation $f_{1,j}(x,y) = x^{\alpha_{j} -1}y^{-\beta_{j}}$ for all $j\in\{1,\dots,N\}$ and $x,y > 0$ directly shows that $f_{1,j}$ is $]0,\infty[$-valued, decreasing in the second variable and continuous and satisfies~\eqref{eq:family of functions 2} for any $x,y > 0$, even with an equal sign.

Next, let us assume that the assertions are valid for some $n\in\N$. Then for each $j\in\{1,\dots,N\}^{n+1}$ the function $g_{n+1,j}:]0,\infty[\times ]0,\infty[\times ]0,1[\rightarrow ]0,\infty[$ defined via
\begin{equation*}
g_{n+1,j}(x,y,\lambda) := (1-\lambda)^{\alpha_{j_{n+1}}-1}(\lambda x + y)^{-\beta_{j_{n+1}}}f_{n,(j_{1},\dots,j_{n})}(\lambda x,y)
\end{equation*}
is decreasing in the second variable and continuous. Hence, the recursive definition~\eqref{eq:family of functions 1} and Fubini's theorem entail that $f_{n+1,j}$ is $]0,\infty]$-valued, decreasing in the second variable and measurable. Moreover,
\begin{align*}
g_{n+1,j}(x,y,\lambda) &\leq c_{n,(j_{1},\dots,j_{n})}(1-\lambda)^{\alpha_{j_{n+1}}-1}(\lambda x + y)^{-\beta_{j_{n+1}}}(\lambda x)^{\beta_{j_{1}} - 1 + \sum_{i=1}^{n} \alpha_{j_{i}} - \beta_{j_{i}}}y^{-\beta_{j_{1}}}\\
&\leq c_{n,(j_{1},\dots,j_{n})}x^{-1 + \sum_{i=1}^{n}\alpha_{j_{i}} - \beta_{j_{i+1}}}y^{-\beta_{j_{1}}}(1-\lambda)^{\alpha_{j_{n+1}}-1}\lambda^{-1 + \sum_{i=1}^{n}\alpha_{j_{i}} - \beta_{j_{i+1}}}
\end{align*}
for all $x,y > 0$ and $\lambda\in ]0,1[$. Thereby, the first and the second inequalities turn into identities if $\beta_{j_{1}} = \cdots = \beta_{j_{n}} = 0$ and $\beta_{j_{n+1}} = 0$, respectively. By dominated convergence, this implies that $f_{n+1,j}$ is sequentially continuous, and we obtain that
\begin{align*}
&f_{n+1,j}(x,y) = x^{\alpha_{j_{n+1}}}\int_{0}^{1}\!g_{n+1,j}(x,y,\lambda)\,\mathrm{d}\lambda\\
&\leq c_{n,(j_{1},\dots,j_{n})} x^{\alpha_{j_{n+1}}}y^{-\beta_{j_{1}}}\int_{0}^{1}\!(1-\lambda)^{\alpha_{j_{n+1}}-1}(\lambda x + y)^{-\beta_{j_{n+1}}}(\lambda x)^{\beta_{j_{1}} - 1 + \sum_{i=1}^{n} \alpha_{j_{i}} - \beta_{j_{i}}}\,\mathrm{d}\lambda\\
&\leq c_{n,(j_{1},\dots,j_{n})} \mathrm{B}\bigg(\sum_{i=1}^{n}\alpha_{j_{i}} - \beta_{j_{i+1}},\alpha_{j_{n+1}}\bigg) x^{\beta_{j_{1}} - 1 + \sum_{i=1}^{n+1} \alpha_{j_{i}} - \beta_{j_{i}}}y^{-\beta_{j_{1}}}
\end{align*}
for any $x,y > 0$ with equality if $\beta_{j_{1}} = \cdots = \beta_{j_{n+1}} = 0$. As the definition of $c_{n+1,j}$ ensures that $c_{n,(j_{1},\dots,j_{n})}\mathrm{B}(\sum_{i=1}^{n}\alpha_{j_{i}} - \beta_{j_{i+1}},\alpha_{j_{n+1}}) = c_{n+1,j}$, the induction proof is complete.

(ii)We have $\alpha_{j_{1}} - \beta_{j_{2}} + \cdots + \alpha_{j_{i}} - \beta_{j_{i+1}} \geq (\alpha_{0} - \beta_{\infty})i$ for all $i,n\in\N$ with $i + 1\leq n$ and $j\in\{1,\dots,N\}^{n}$. Thus, as $\mathrm{B}$ is decreasing in both variables, we obtain that
\begin{equation*}
c_{n,j} \leq \prod_{i=1}^{n-1}\mathrm{B}\big((\alpha_{0} - \beta_{\infty})i,\alpha_{0}\big) = \frac{\hat{c}_{n}\Gamma(\alpha_{0})^{n}}{\Gamma((\alpha_{0} - \beta_{\infty})n + \beta_{\infty})}
\end{equation*}
for any $n\in\N$, and equality holds if $\alpha_{1} = \cdots = \alpha_{N}$ and $\beta_{1} = \cdots = \beta_{N}$. So, the first two assertions are verified.

Regarding the third claim, we note that $\hat{c}_{n} = \hat{c}_{n_{\Gamma}}\prod_{i = n_{\Gamma}}^{n-1}\frac{\Gamma((\alpha_{0} - \beta_{\infty})i)}{\Gamma((\alpha_{0} - \beta_{\infty})i + \beta_{\infty})} \leq \hat{c}_{n_{\Gamma}}$ for each $n\in\N$ with $n > n_{\Gamma}$, since  $x_{\Gamma} \leq (\alpha_{0} - \beta_{\infty})i \leq (\alpha_{0} - \beta_{\infty})i + \beta_{\infty}$ for every $i\in\N$ with $i\geq n_{\Gamma}$ and $\Gamma$ is strictly increasing on $[x_{\Gamma},\infty[$.

(iii) As pointwise limit of the sequence $(\sum_{i=1}^{n}\sum_{j\in\{1,\dots,N\}^{i}}f_{i,j})_{n\in\N}$ of partial sums, $f$ is $]0,\infty]$-valued, decreasing in the second variable and measurable. Moreover, from the estimates~\eqref{eq:family of functions 2} and~\eqref{eq:family of functions 3} we infer that
\begin{align*}
f_{n,j}(x,y) \leq \frac{\hat{c}_{n}\Gamma(\alpha_{0})^{n}}{\Gamma((\alpha_{0} - \beta_{\infty})n + \beta_{\infty})}\frac{\varphi_{\beta_{\infty},\beta_{0}}(x)}{x\varphi_{\beta_{\infty},\beta_{0}}(y)}\varphi_{\alpha_{0} - \beta_{\infty},\alpha_{\infty} - \beta_{0}}(x)^{n}
\end{align*}
for all $n\in\N$, $j\in\{1,\dots,N\}^{n}$ and $x,y > 0$, and equality holds if $N = 1$ and $\beta = 0$. Hence, the claimed estimate is valid and it turns into an identity in the asserted case. By~\eqref{eq:limit related to the digamma function}, this also implies that the just considered sequence of partial sums converges locally uniformly to $f$, which ensures the continuity of $f$.
\end{proof}

\begin{proof}[Proof of Proposition~\ref{pr:kernels on Cartesian products}]
(i) It suffices to show the first claim, as the representations of $I(t)$ and $[r,t]$ for any $r,t\in I$ with $r\leq t$ follow immediately from the definition of $\leq$. This definition also entails the triangular set $\Delta$ is of the form
\begin{align*}
\Delta &= \big\{(t,s)\in I\times I\,|\, (t_{i},s_{i})\in\Delta_{i}\text{ for all $i\in\{1,\dots,m\}$}\big\} = f^{-1}(\Delta_{1}\times\cdots\times\Delta_{m}),
\end{align*}
where the map $f:I\times I\rightarrow\prod_{i=1}^{m}I_{i}\times I_{i}$ is given by $f(t,s) := (t_{1},s_{1},\dots,t_{m},s_{m})$. By endowing $\prod_{i=1}^{m}I_{i}\times I_{i}$ with the product $\sigma$-field $\otimes_{i=1}^{m}\mathcal{I}_{i}\otimes\mathcal{I}_{i}$, it is readily checked that $f$ is product measurable.

(ii) We prove the estimate~\eqref{eq:kernels on Cartesian products 2} for all $s,t\in I$ with $s\leq t$ by induction over $n\in\N$ and directly turn to the induction step, since in the initial case we have $\R_{k,\mu,1} = k$ and $\R_{k_{i},\mu_{i},1} = k_{i}$ for any $i\in\{1,\dots,m\}$. Then the recursion in~\eqref{eq:resolvent sequence} yields that
\begin{align}\nonumber
\R_{k,\mu,n + 1}(t,s) &= \int_{[s,t]}\!k(t,\tilde{s})\R_{k,\mu,n}(\tilde{s},s)\,\mu(\mathrm{d}\tilde{s})\\\label{eq:product representation inequality}
& \leq \int_{[s_{1},t_{1}]\times\cdots\times [s_{m},t_{m}]}\!\prod_{i=1}^{m}k_{i}(t_{i},\tilde{s}_{i})\R_{k_{i},\mu_{i},n}(\tilde{s}_{i},s_{i})\,\mu_{1}\otimes\cdots\otimes\mu_{m}(\mathrm{d}\tilde{s})\\\nonumber
&= \prod_{i=1}^{m}\int_{[s_{i},t_{i}]}\!k_{i}(t_{i},\tilde{s}_{i})\R_{k_{i},\mu_{i},n}(\tilde{s}_{i},s_{i})\,\mu_{i}(\mathrm{d}\tilde{s}_{i}) = \prod_{i=1}^{m}\R_{k_{i},\mu_{i},n+1}(t_{i},s_{i}),
\end{align}
due to the inequalities~\eqref{eq:kernels on Cartesian products 1} and Fubini's theorem. In particular, if the inequalities in~\eqref{eq:kernels on Cartesian products 1} are equations, then we may assume in the induction hypothesis that equality holds in~\eqref{eq:kernels on Cartesian products 2}. So, there is actually an equal sign in~\eqref{eq:product representation inequality} in this case.

In consequence, the estimates~\eqref{eq:kernels on Cartesian products 3} follow for each $t\in I$ from the definition of $\mathrm{I}_{k,\mu,p}(t)$ in~\eqref{eq:series function} by employing the bound~\eqref{eq:kernels on Cartesian products 2} for all $n\in\N$ and $s\in I(t)$. In addition, the first estimate in~\eqref{eq:kernels on Cartesian products 3} becomes an identity whenever~\eqref{eq:kernels on Cartesian products 1} are equations.
\end{proof}

\subsection{Proofs of the Gronwall inequalities}

\begin{proof}[Proof of Proposition~\ref{pr:estimates for Gronwall inequalities}]
First, the fact that~\eqref{eq:estimates for Gronwall inequalities 1} turns into an identity for $m = 0$ follows directly from Example~\ref{ex:void preorder}. In the case $m\geq 1$, we recall from Example~\ref{ex:regular kernels on intervals} that
\begin{equation*}
\R_{k_{i}^{p},\mu_{i},n}(t_{i},s_{i}) \leq \frac{k_{i}(t_{i},s_{i})^{p}}{(n-1)!}\bigg(\int_{s_{i}}^{t_{i}}\!k_{i}(t_{i},\tilde{s}_{i})^{p}\,\mu(\mathrm{d}\tilde{s}_{i})\bigg)^{n-1}
\end{equation*}
for all $i\in\{1,\dots,m\}$, $n\in\N$ and $s_{i},t_{i}\in I_{i}$ with $s_{i}\leq t_{i}$, and equality holds whenever the stated specific case in~\eqref{eq:specific case} happens. Further, we immediately see that $\int_{[s,t]}\!k(t,\tilde{s})^{p}\,\mu(\mathrm{d}\tilde{s})$ is finite and agrees with
\begin{equation*}
\int_{I_{m+1}}\!k_{m+1}(\tilde{s}_{m+1})^{p}\,\mu_{m+1}(\mathrm{d}\tilde{s}_{m+1})\prod_{i=1}^{m}\int_{s_{i}}^{t_{i}}\!k_{i}(t_{i},\tilde{s}_{i})^{p}\,\mu_{i}(\mathrm{d}\tilde{s}_{i})\end{equation*}
for any $s,t\in I$ with $s\leq t$, by Fubini's theorem. For this reason, an application of Proposition~\ref{pr:kernels on Cartesian products} shows the asserted estimate~\eqref{eq:estimates for Gronwall inequalities 1} for each $n\in\N$. Namely,
\begin{align*}
\R_{k^{p},\mu,n}(t,s) &= k_{m+1}(s_{m+1})^{p}\bigg(\int_{I_{m+1}}\!k_{m+1}(\tilde{s}_{m+1})^{p}\,\mu_{m+1}(\mathrm{d}\tilde{s}_{m+1})\bigg)^{n-1}\prod_{i=1}^{m}\R_{k_{i}^{p},\mu_{i},n}(t_{i},s_{i})\\
&\leq \frac{k(t,s)^{p}}{((n-1)!)^{m}}\bigg(\int_{[s,t]}\!k(t,\tilde{s})^{p}\,\mu(\mathrm{d}\tilde{s})\bigg)^{n-1}.
\end{align*}

For the proof of the second claim, we readily note that~\eqref{eq:estimates for Gronwall inequalities 2} is a necessary condition for~\eqref{eq:resolvent inequality condition 2} for every $t\in I$, as $\R_{k^{p},\mu,1} = k^{p}$. For the converse direction, we infer from the derived estimate~\eqref{eq:estimates for Gronwall inequalities 1} that
\begin{equation*}
\int_{I(t)}\!\R_{k^{p},\mu,n}(t,s)u_{0}(s)^{p}\,\mu(\mathrm{d}s)\leq\frac{c_{m}(t)^{n-1}}{((n-1)!)^{m}}\int_{I(t)}\!k(t,s)^{p}u_{0}(s)^{p}\,\mu(\mathrm{d}s)
\end{equation*}
for any $n\in\N$ and $t\in I$ with the real constant $c_{m}(t) := \int_{I(t)}k(t,s)^{p}\,\mu(\mathrm{d}s)$, which equals $\int_{I}\!k_{m+1}(s)^{p}\,\mu(\mathrm{d}s)$ for $m = 0$ and
\begin{equation*}
\int_{I_{m+1}}\!k_{m+1}(s_{m+1})^{p}\,\mu_{m+1}(\mathrm{d}s_{m+1})\prod_{i=1}^{m}\int_{I_{i}(t_{i})}\!k_{i}(t_{i},s_{i})^{p}\,\mu_{i}(\mathrm{d}s_{i})
\end{equation*}
for $m\geq 1$, by Fubini's theorem. Hence, if $c_{m}(t)$ or $\int_{I(t)}\!k(t,s)^{p}u_{0}(s)^{p}\,\mu(\mathrm{d}s)$ vanish, then so does $\int_{I(t)}\!\R_{k^{p},\mu,n}(t,s)u_{0}(s)^{p}\,\mu(\mathrm{d}s)$ for all $n\in\N$. Otherwise, the series in~\eqref{eq:resolvent inequality condition 2} is dominated by an absolutely convergent series, as the ratio test shows:
\begin{equation*}
\lim_{n\uparrow\infty} \bigg(\frac{c_{m}(t)^{n}}{(n!)^{m}}\frac{((n-1)!)^{m}}{c_{m}(t)^{n-1}}\bigg)^{\frac{1}{p}}  = \lim_{n\uparrow\infty} \bigg(\frac{c_{m}(t)}{n^{m}}\bigg)^{\frac{1}{p}} =
\begin{cases}
(\int_{I}\!k_{m+1}(s)^{p}\,\mu(\mathrm{d}s))^{\frac{1}{p}} & \text{for $m = 0$}\\
0 & \text{for $m\geq 1$}
\end{cases}
.
\end{equation*}

Finally, since $\int_{I(t)}\!k(t,s)^{p}\,\mu(\mathrm{d}s) < \infty$ for each $t\in I$, the first part of the third claim follows from the second assertion by taking $u_{0} = 1$, and the second part is readily seen.
\end{proof}

\begin{proof}[Proof of Lemma~\ref{le:integral estimates}]
By Minkowski's inequality, we may assume that $v_{0} = 1$ and $l = 0$ or instead $v_{0} = 0$. In the first case, the asserted bound becomes the identity
\begin{equation}\label{eq:integral estimates 1}
\int_{I(t)}\!\frac{k(t,s)^{p}}{((n-1)!)^{m}}\bigg(\int_{[s,t]}\!k(t,\tilde{s})^{p}\,\mu(\mathrm{d}\tilde{s})\bigg)^{n-1}\,\mu(\mathrm{d}s) = \frac{1}{(n!)^{m}}\bigg(\int_{I(t)}\!k(t,s)^{p}\,\mu(\mathrm{d}s)\bigg)^{n}
\end{equation}
for any $n\in\N$ and $t\in I$, which is directly checked for $m = 0$. If instead $m\geq 1$, then we derive equation~\eqref{eq:integral estimates 1} by using Fubini's theorem and the formula
\begin{equation*}
\int_{I_{i}(t_{i})}\!\frac{k_{i}(t_{i},s_{i})^{p}}{(n-1)!}\bigg(\int_{s_{i}}^{t_{i}}\!k_{i}(t_{i},\tilde{s}_{i})^{p}\,\mu_{i}(\mathrm{d}\tilde{s}_{i})\bigg)^{n-1}\,\mu_{i}(\mathrm{d}s_{i}) = \frac{1}{n!}\bigg(\int_{I_{i}(t_{i})}\!k_{i}(t_{i},s_{i})^{p}\,\mu_{i}(\mathrm{d}s_{i})\bigg)^{n}
\end{equation*}
for every $i\in\{1,\dots,m\}$, which follows from the fundamental theorem of calculus for Riemann-Stieltjes integrals and monotone convergence. In the second case when $v_{0} = 0$, the claimed estimate reduces to
\begin{equation}\label{eq:integral estimates 2}
\begin{split}
\int_{I(t)}\!\frac{k(t,s)^{p}}{((n-1)!)^{m}}\bigg(\int_{[s,t]}&\!k(t,\tilde{s})^{p}\,\mu(\mathrm{d}\tilde{s})\bigg)^{n-1}\int_{I(s)}\!l(s,r)^{p}\,\mu(\mathrm{d}r)\,\mu(\mathrm{d}s)\\
&\leq \frac{1}{(n!)^{m}}\int_{I(t)}\!\bigg(\int_{[s,t]}\!k(t,\tilde{s})^{p}\,\mu(\mathrm{d}\tilde{s})\bigg)^{n}l(t,s)^{p}\,\mu(\mathrm{d}s),
\end{split}
\end{equation}
which turns into an equation for $m = 0$, as we readily see. If instead $m\geq 1$, then~\eqref{eq:integral estimates 2} is deduced from Fubini's theorem and the formula
\begin{align*}
\int_{I_{i}(t_{i})}\!\frac{k_{i}(t_{i},s_{i})^{p}}{(n-1)!}\bigg(\int_{s_{i}}^{t_{i}}&\!k_{i}(t_{i},\tilde{s}_{i})^{p}\,\mu_{i}(\mathrm{d}\tilde{s}_{i})\bigg)^{n-1}\int_{I_{i}(s_{i})}\!l_{i}(t_{i},r_{i})^{p}\,\mu_{i}(\mathrm{d}r_{i})\,\mu_{i}(\mathrm{d}s_{i})\\
&= \frac{1}{n!}\int_{I_{i}(t_{i})}\!\bigg(\int_{s_{i}}^{t_{i}}\!k_{i}(t_{i},\tilde{s}_{i})^{p}\,\mu_{i}(\mathrm{d}\tilde{s}_{i})\bigg)^{n}l_{i}(t_{i},s_{i})^{p}\,\mu_{i}(\mathrm{d}s_{i})
\end{align*}
for each $i\in\{1,\dots,m\}$, which we infer from the fundamental theorem of calculus for Riemann-Stieltjes integrals, integration by parts and monotone convergence. This also justifies that if $l_{1},\dots,l_{m}$ satisfy the condition in~(ii), then~\eqref{eq:integral estimates 2} is an identity.
\end{proof}

\begin{proof}[Proof of Corollary~\ref{co:Gronwall sequence inequalities}]
The first claimed estimate, which becomes an identity in the stated cases, follows immediately from Proposition~\ref{pr:resolvent sequence inequality} by employing the estimate~\eqref{eq:estimates for Gronwall inequalities 3} for any $n\in\N$ and $t\in J$. The second estimate is a direct consequence of Lemma~\ref{le:integral estimates}.
\end{proof}

\begin{proof}[Proof of Corollary~\ref{co:Gronwall inequalities}]
The condition~\eqref{eq:resolvent inequality condition 2}, which implies~\eqref{eq:resolvent inequality condition 1}, is satisfied for every $t\in J$, by Proposition~\ref{pr:estimates for Gronwall inequalities}. Thus, we obtain the first asserted estimate, which turns into an identity in the stated cases, from Corollary~\ref{co:resolvent inequality} by using the estimate~\eqref{eq:estimates for Gronwall inequalities 3} for all $n\in\N$ and $t\in J$. The second estimate is implied by Lemma~\ref{le:integral estimates}.

Alternatively, the claimed estimates follow from Corollary~\ref{co:Gronwall sequence inequalities} for the choice $u_{n} = u$ for all $n\in\N$, since the sequence $(w_{n})_{n\in\N}$ of $[0,\infty]$-valued measurable functions on $I$ appearing there converges pointwise to zero.

Moreover, the last two series in~\eqref{eq:Gronwall inequalities} converge absolutely for every $t\in I$, because $\int_{I(t)}\!k(t,s)^{p}\,\mu(\mathrm{d}s)$ and $\int_{I(t)}\!l(t,s)^{p}\,\mu(\mathrm{d}s)$ are finite. In fact, if $\int_{I(t)}\!k(t,s)^{p}\,\mu(\mathrm{d}s) = 0$, then both series vanish. Otherwise, we may apply the ratio test to see that the first series converges absolutely, which entails the absolute convergence of the second.
\end{proof}

\let\OLDthebibliography\thebibliography
\renewcommand\thebibliography[1]{
  \OLDthebibliography{#1}
  \setlength{\parskip}{1pt}
  \setlength{\itemsep}{2pt}}

\end{document}